\newcommand{\C}{{\mathbb C}}
\newcommand{\R}{{\mathbb R}}
\newcommand{\mP}{\mathbb P}
\newcommand{\mE}{\mathbb E}
\newcommand{\mH}{\mathbb H}
\newcommand{\mL}{\mathbb L}
\newcommand{\mT}{\mathbb T}
\newcommand{\mV}{\mathbb V}
\newcommand{\mW}{\mathbb W}
\newcommand{\inpro}[2]{\left\langle{#1},{#2}\right\rangle}
\newcommand{\norm}[2]{\|{#1}\|_{#2}}
\newcommand{\snorm}[2]{\left|{#1}\right|_{#2}}
\newcommand{\vecH}{\boldsymbol{H}}
\newcommand{\vecM}{\boldsymbol{M}}
\newcommand{\veca}{\boldsymbol{a}}
\newcommand{\vecb}{\boldsymbol{b}}
\newcommand{\vecc}{\boldsymbol{c}}
\newcommand{\vecg}{\boldsymbol{g}}
\newcommand{\vecm}{\boldsymbol{m}}
\newcommand{\vecn}{\boldsymbol{n}}
\newcommand{\vecu}{\boldsymbol{u}}
\newcommand{\vecv}{\boldsymbol{v}}
\newcommand{\vecw}{\boldsymbol{w}}
\newcommand{\vecx}{\boldsymbol{x}}
\newcommand{\vecy}{\boldsymbol{y}}
\newcommand{\vecvarphi}{\boldsymbol{\varphi}}
\newcommand{\vecpsi}{\boldsymbol{\psi}}
\newcommand{\vecxi}{\boldsymbol{\xi}}
\newcommand{\veczeta}{\boldsymbol{\zeta}}
\DeclareMathOperator*{\esssup}{ess\,sup \/}
\newcommand{\cF}{{\mathcal F}}
\newcommand{\cI}{{\mathcal I}}
\newcommand{\cL}{{\mathcal L}}
\newcommand{\cN}{{\mathcal N}}
\newcommand{\goto}{\rightarrow}
\newcommand{\p}{\partial}
\newcommand{\pa}{\partial}
\newcommand{\ds}{\, ds}
\newcommand{\dt}{\, dt}
\newcommand{\dvx}{\, d\vecx}
\newcommand{\nn}{\nonumber}
\numberwithin{equation}{section}
\newtheorem{theorem}{Theorem}[section]
\newtheorem{lemma}[theorem]{Lemma}
\newtheorem{proposition}[theorem]{Proposition}
\newtheorem{remark}[theorem]{Remark}
\newtheorem{definition}[theorem]{Definition}
\title[FEM for stochastic Landau--Lifshitz--Gilbert
equation]{A FINITE ELEMENT APPROXIMATION FOR
THE STOCHASTIC LANDAU--LIFSHITZ--GILBERT EQUATION WITH MULTI-DIMENSIONAL NOISE}
\author{Beniamin Goldys}
\address{School of Mathematics and Statistics,
         The University of Sydney,
         Sydney 2006, Australia}
\email{beniamin.goldys@sydney.edu.au}
\author{Joseph Grotowski}
\address{School of Mathematics and Physics,
         The University of Queensland,
         QLD 4072, Australia}
\email{j.grotowski@uq.edu.au}
\author{Kim-Ngan Le}
\address{School of Mathematics and Statistics,
         The University of New South Wales,
         Sydney 2052, Australia}
\email{n.le-kim@unsw.edu.au}
\subjclass[2000]{Primary 35Q40, 35K55, 35R60, 60H15, 65L60,
65L20, 65C30; Secondary 82D45}
\keywords{stochastic partial differential equation,
Landau--Lifshitz--Gilbert equation,
finite element, ferromagnetism}
\date{\today}
\newtheorem{algorithm}{Algorithm}[section]
\begin{document}
\begin{abstract}
We propose an unconditionally 
convergent linear finite element scheme for the 
stochastic Landau--Lifshitz--Gilbert (LLG) equation with multi-dimensional noise.
By using the Doss-Sussmann technique, we first transform the stochastic LLG equation 
into a partial differential equation that depends on the solution of 
the auxiliary equation for the diffusion part.
The resulting equation has solutions absolutely continuous with respect to time. 
We then propose a convergent
$\theta$-linear scheme for the numerical solution of the
reformulated equation. As a consequence,
we are able to show the existence of weak
martingale solutions to the stochastic LLG equation.

\end{abstract}
\maketitle
\tableofcontents
\section{Introduction}
The deterministic Landau-Lifschitz-Gilbert (LLG) equation provides a basis for  the theory and applications of ferromagnetic materials and fabrication of magnetic memories in particular, see for example \cite{Chantrelletal2015,Cimrak_survey,Gil55,LL35}. 
Let us recall, that in this theory we consider a ferromagnetic material filling the domain $D$ and a function  $\vecu\in H^{1,2}\left(D,\mathbb S^2\right)$, where $\mathbb S^2$ stands for the unit sphere in $\mathbb R^3$, represents a configuration of magnetic moments across the domain $D$, that is $\vecu(x)$ is the magnetisation vector at the point $x\in D$. According to the Landau and Lifschitz  theory of ferrormagnetizm \cite{LL35}, modified later by Gilbert \cite{Gil55}, the time evolution of magnetic moments $\vecM(t,x)$ is described, in the simplest case, by the Landau-Lifschitz-Gilbert (LLG) equation
\begin{equation}\label{equ:llg}
\begin{cases}
&\dfrac{\partial\vecM}{\partial t}
=
\lambda_1\vecM\times\Delta \vecM
-
\lambda_2\vecM\times(\vecM\times\Delta\vecM)
\quad\text{ in } (0,T)\times D,\\
\\
&\frac{\pa\vecM}{\pa \vecn}
= 0 \quad\text{ in }  (0,T)\times \pa D,\\
\\
&\vecM(0,\cdot)
=
\vecM_0(\cdot) \quad\text{ in } D, 
\end{cases}
\end{equation}
where $\lambda_1\not=0$ and $\lambda_2 >0$ 
are constants, and  $\vecn$ stands for the outward normal vector on $\partial D$; see
e.g. \cite{Cimrak_survey}. We assume that $\vecM_0\in H^{1,2}\left( D,\mathbb S^2\right)$, and then one can show that 
\begin{equation}\label{s2}
|\vecM(t,x)|=1,\quad t\in[0,T],\,\,x\in D
\end{equation}
\par
In this paper we are concerned with a stochastic version of the LLG equation. Randomly fluctuating fields were  originally introduced in physics by N\'eel in \cite{neel} as formal quantities responsible for magnetization fluctuations. The necessity 
of being able to describe deviations from the average magnetization trajectory in an ensemble of noninteracting 
nanoparticles was later emphasised by Brown in ~\cite{Brown1979,Brown63}. According to a non-rigorous arguments of Brown the magnetisation $\vecM$ evolves randomly according to a stochastic version of \eqref{equ:llg} that takes the form, (see~\cite{BrzGolJer12} for more details about the physical background and derivation of this equation) 
\begin{align}\label{E:1.1}
\begin{cases}
&d\vecM
=
\big(\lambda_1 \vecM\times \Delta \vecM
-
\lambda_2 \vecM\times(\vecM\times \Delta \vecM)\big)dt
+
\sum_{i=1}^q(\vecM\times \vecg_i)\circ dW_i(t),\\
\\
&\dfrac{\pa\vecM}{\pa \vecn}
= 0 \quad\text{ on } (0,T) \times \pa D,\\
\\
&\vecM(0,\cdot)
=
\vecM_0 \quad\text{ in } D, 
\end{cases}
\end{align}
where $\vecg_i \in\mW^{2,\infty}(D)$, $i=1,\cdots,q$, satisfy the homogeneous 
Neumann boundary conditions and $\left(W_i\right)_{i=1}^q$ is a $q$-dimensional Wiener process. In view of the property~\eqref{s2}
for the deterministic system, we require that $\vecM$ also
satisfies~\eqref{s2}. To this end we are forced to use the Stratonovich
differential $\circ\: dW_i(t)$ in equation \eqref{E:1.1}. Mathematical theory of equation \eqref{E:1.1} has been initiated only recently, in ~\cite{BrzGolJer12}, where the existence of weak martingale solutions to \eqref{E:1.1} was proved for the case $q=1$ using the Galerkin-Faedo approximations. Let us note, that usually the Galerkin-Faedo approximations do not provided a useful computational tool for solving an equation. 
\par\bigskip
The aim of this paper is two-fold. We will prove the existence of solutions to the stochastic LLG equation \eqref{E:1.1} and at the same time will provide an efficient and flexible algorithm for solving numerically this equation. To this end we will use the finite element method and a new transformation of the Stratonovich type equation \eqref{E:1.1} to a deterministic PDE \eqref{InE:14} with coefficients determined by a stochastic ODE \eqref{auE:2} that can be solved separately. The deterministic PDE we obtain, has solutions absolutely continuous with respect to time, hence convenient for the construction of a convergent finite elemetn scheme. Our approach is based on the 
Doss-Sussmann technique~\cite{Doss1977,sussmann1977}. This transformation was introduced in \cite{BNT2016} to study the stochastic LLG equation with a single Wiener process ($q=1$), in which case the auxiliary ODE is deterministic. Since the vector fields $\vecu\times\vecg_i$ are non-commuting, the case of $q>1$ is more difficult and requires new arguments. 
\par
We apply the finite element method to the PDE resulting from this transformation and prove the convergence of linear finite element scheme to  a weak martingale solution to ~\eqref{E:1.1} (after taking an inverse transformation). Our proof is simpler than the proof in ~\cite{BrzGolJer12} and covers the case of $q>1$. We note here that under appropriate assumptions even the case of infinite-dimensional noise ($q=\infty$) can be handled in exactly the same way. 
\par
Let us recall that the first convergent finite element scheme for the stochastic LLG equation was studied in~\cite{BanBrzPro09} and 
is based on a Crank--Nicolson type time-marching evolution, relying 
on a nonlinear iteration solved by a fixed point method. 
On the other hand, there has been an intensive development of a new class of numerical methods for the LLG equation~\eqref{equ:llg}
based on a linear iterations, yielding unconditional convergence and stability~\cite{Alo08,AloJai06}. 
The ideas developed there are extended and generalized in~\cite{BNT2016,Alouges2014} in order to 
take into account the stochastic term. A fully linear discrete scheme for~\eqref{E:1.1} 
is studied in~\cite{BNT2016} but with one-dimensional noise. The method is based on the so--called 
Doss-Sussmann technique~\cite{Doss1977,sussmann1977}, which allows one to replace the stochastic partial differential equation (PDE) 
by an equivalent PDE with random coefficients. 
In contrast, ~\cite{Alouges2014} considers, for a more  general noise, a projection scheme 
applied directly to the original stochastic equation~\eqref{E:1.1}. However, this  
approach requires a quite specific and complicated treatment of 
the stochastic term.
In this paper, we propose a convergent
$\theta$-linear scheme for the numerical solution of the tranformed equation and prove unconditional stability and
convergence for the scheme when $\theta>1/2$. To the best of our knowledge this is a new result for this problem. 
\par
The paper is organised as follows. In Section~\ref{sec:wea
sol} we define the notion of weak martingale solutions to~\eqref{E:1.1}
and state our main result. In Section~\ref{sec:pre}, we introduce an auxiliary stochastic ODE and prove some  properties of solution necessary for the transformation of 
equation~\eqref{E:1.1} to a deterministic PDE with random coefficients. 
Details of this transformation are presented in
Section~\ref{sec:equ eqn}. We also show in this section how
a weak solution to~\eqref{E:1.1} can be obtained from a
weak solution of the reformulated form.
In Section~\ref{sec:fin ele} we introduce our finite element
scheme and present a proof for the stability of approximate solutions.
Section~\ref{sec:pro} is devoted to the proof of
the main theorem, namely the convergence of finite
element solutions to a weak solution of the reformulated
equation. Finally, in the Appendix we collect, for 
the reader's convenience, a number of facts that are used in the course of the proof.

Throughout this paper, $c$ denotes a generic constant that
may take different values at different occurrences. In what follows we will also use the notation $D_T = (0,T) \times D$. 
\section{Definition of a weak solution and the main
result}\label{sec:wea sol}
\noindent
In this section we state the definition of a weak
solution to~\eqref{E:1.1} and present our main result.
Before doing so, we introduce some suitable Sobolev spaces, and fix some notation. The standing assumption for the rest of the paper is that $D$ is a bounded open domain in $\R^3$ with a smooth boundary. 
\par
For any $U\subset\R^d$, $d\ge 1$, we denote by $\mL^2(U)$ the space of Lebesgue square-integrable functions defined on $U$ and taking values in $\R^3$. 
The function space $\mH^1(U)$ is defined as:
\begin{align*}
\mH^1(U)
&=
\left\{ \vecu\in\mL^2(U) : \frac{\p\vecu}{\p
x_i}\in \mL^2(U)\quad\text{for } i\le d.
\right\}.
\end{align*}
\begin{remark}
For $\vecu, \vecv\in \mH^1(D)$
we denote
\begin{align*}
\vecu\times\nabla\vecv
&:=
\left(
\vecu\times\frac{\partial\vecv}{\partial x_1},
\vecu\times\frac{\partial\vecv}{\partial x_2},
\vecu\times\frac{\partial\vecv}{\partial x_3}
\right) \\
\nabla\vecu\times\nabla\vecv
&:=
\sum_{i=1}^3
\frac{\partial\vecu}{\partial x_i}
\times
\frac{\partial\vecv}{\partial x_i} \\
\inpro{\vecw\times\nabla\vecv}{\nabla\vecu}_{\mL^2(D)}
&:=
\sum_{i=1}^3
\inpro{\vecw\times\frac{\partial\vecv}{\partial x_i}}%
{\frac{\partial\vecu}{\partial x_i}}_{\mL^2(D)}\quad\forall\,\vecw\in\mL^{\infty}(D).
\end{align*}
\end{remark}
\begin{definition}\label{def:wea sol}
Given $T\in(0,\infty)$ and a family of functions $\left\{g_i:\, i=1,\ldots,q\right\}\subset\mathbb L^\infty(D)$, a weak martingale solution
$(\Omega,\cF,(\cF_t)_{t\in[0,T]},\mP,W,\vecM)$
to~\eqref{E:1.1}, for the time interval $[0,T]$,
consists of
\begin{enumerate}
\renewcommand{\labelenumi}{(\alph{enumi})}
\item
a filtered probability space
$(\Omega,\cF,(\cF_t)_{t\in[0,T]},\mP)$ with the
filtration satisfying the usual conditions,
\item
a $q$-dimensional $(\cF_t)$-adapted Wiener process
$W=(W_t)_{t\in[0,T]}$,
\item
a progressively measurable
process $\vecM : [0,T]\times\Omega \goto \mL^2(D)$
\end{enumerate}
such that
\begin{enumerate}
\item
\quad
$\vecM(\cdot,\omega) \in C([0,T];\mH^{-1}(D))$
for $\mP$-a.e. $\omega\in\Omega$;
\item
\quad
$\mE\left(
\esssup_{t\in[0,T]}\|\nabla\vecM(t)\|^2_{\mL^2(D)}
\right) < \infty$;
\item
\quad
$|\vecM(t,x)| = 1$
for each $t \in [0,T]$, a.e. $x\in D$, and $\mP$-a.s.;
\item
\quad
for every  $t\in[0,T]$,
for all $\vecpsi\in\C_0^{\infty}(D)$,
$\mP$-a.s.:
\begin{align}\label{wE:1.1}
\inpro{\vecM(t)}{\vecpsi}_{\mL^2(D)}
-
\inpro{\vecM_0}{\vecpsi}_{\mL^2(D)}
&=
-\lambda_1
\int_0^t
\inpro{\vecM\times\nabla\vecM}{\nabla\vecpsi}_{\mL^2(D)}\ds\nn\\
&\quad-
\lambda_2
\int_0^t
\inpro{\vecM\times\nabla\vecM}{\nabla(\vecM\times\vecpsi)}_{\mL^2(D)}\ds\nn\\
&\quad+
\sum_{i=1}^q
\int_0^t
\inpro{\vecM\times\vecg_i}{\vecpsi}_{\mL^2(D)}\circ dW_i(s).
\end{align}
\end{enumerate}
\end{definition}

As the main result of this paper, we will establish a finite element scheme defined via a sequence 
of functions which are piecewise linear in both the space and time variables. 
We also prove that this sequence contains a subsequence converging to a weak martingale 
solution in the sense of Definition~\ref{def:wea sol}. A precise statement will be given in Theorem~\ref{the:mai}.

\section{The auxiliary equation for the diffusion part}\label{sec:pre}
In this section we introduce the auxiliary equation \eqref{auE:1} that will
be used in the next section to define a new variable from $\vecM$, and establish some properties of its solution.\\
Let $\vecg_1,\ldots,\vecg_q\in C\left(\overline{D},\mathbb R^3\right)$, be fixed. For $i=1,\ldots,q$, and $x\in\overline{D}$ we 
define linear operators $G_i(x):\mathbb R^3\to\mathbb R^3$ by $G_i(x)\vecu=\vecu\times \vecg_i(x)$. 
In what follows we suppress the argument $x$. It is easy to check that 
\begin{align}
G_i^\star=-G_i\,,\label{eq: nabZ2}\\
\quad\text{and}\quad
\left(G_i^2\right)^\star=G_i^2\,.\label{eq: nabZ3}
\end{align}
We will consider a stochastic Stratonovitch equation on the algebra  $\mathcal L\left(\mathbb R^3\right)$ of linear operators in $\mathbb R^3$: 
\begin{equation}\label{eqz}
Z_t=I+\sum_{i=1}^q\int_0^t G_iZ_s\circ dW_i(s),\quad t\ge 0\,.
\end{equation}
\begin{lemma}\label{lemnew1}
Let $\vecg_1,\ldots,\vecg_q\in C\left(\overline{D},\mathbb R^3\right)$. Then the following holds.\\
(a) For every $x\in\overline{D}$ equation \eqref{eqz} has a unique strong solution, which has a $t$-continuous version in  $\mathcal L\left( \mathbb R^3\right)$.\\
(b) For every $t\ge 0$ and $x\in\overline{D}$ 
\begin{equation}\label{iso}
\left|Z_t\vecu\right|=|\vecu|\quad\mathbb P-a.s\quad\mathrm{for\,\,every\,\,}\vecu\in\mathbb R^3.
\end{equation}
In particular, 
 for every $t\ge 0$ the operator $Z_t$ is invertible and $Z_t^{-1}=Z_t^\star$.\\
 (c) If moreover  $\vecg_1,\ldots,\vecg_q\in C^\alpha\left(\overline{D},\mathbb R^3\right)$ for a certain $\alpha\in(0,1)$ then the mapping $(t,x)\to Z_t(x)$ has a continuous version in $\mathcal L\left(\R^3\right)$. 
\end{lemma}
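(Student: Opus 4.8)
The plan is to treat the three parts in sequence, using at each stage the structural identities \eqref{eq: nabZ2}--\eqref{eq: nabZ3} for the operators $G_i$ (antisymmetry of $G_i$ and symmetry of $G_i^2$), together with the standard conversion between Stratonovich and It\^o calculus. For part (a), fix $x\in\overline D$ and write \eqref{eqz} in It\^o form: using $G_iZ_s\circ dW_i = G_iZ_s\,dW_i + \tfrac12\,d[G_iZ,W_i]_s$ and the fact that the only contribution to the cross-variation comes from the $W_i$-term of $dZ_s$, one gets the linear It\^o SDE $dZ_t = \sum_{i}G_iZ_t\,dW_i(t) + \tfrac12\sum_i G_i^2 Z_t\,dt$ with $Z_0=I$. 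The coefficients $A\mapsto G_iA$ and $A\mapsto \tfrac12 G_i^2 A$ are bounded linear (hence globally Lipschitz) maps on the finite-dimensional space $\mathcal L(\R^3)$, so the classical existence-and-uniqueness theorem for SDEs with Lipschitz coefficients yields a unique strong solution with a $t$-continuous version. (For $q=\infty$ one needs $\sum_i\|\vecg_i\|_\infty^2<\infty$ so that the coefficient map is still a bounded operator; I would remark on this.)

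For part (b), the key computation is to apply It\^o's formula to the real-valued process $t\mapsto |Z_t\vecu|^2 = \inpro{Z_t\vecu}{Z_t\vecu}$ for fixed $\vecu\in\R^3$, or equivalently — and more cleanly — to work at the Stratonovich level, where the ordinary chain rule applies: $d|Z_t\vecu|^2 = 2\inpro{Z_t\vecu}{\,dZ_t\,\vecu} = 2\sum_i \inpro{Z_t\vecu}{G_i Z_t\vecu}\circ dW_i(t)$, and each integrand vanishes since $\inpro{w}{G_iw} = \inpro{w}{w\times\vecg_i}=0$ (equivalently $G_i^\star=-G_i$ forces $\inpro{w}{G_iw}=0$). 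Hence $|Z_t\vecu|^2$ is $\mathbb P$-a.s.\ constant in $t$ with initial value $|\vecu|^2$, which is \eqref{iso}. Since $Z_t$ is thus a $\mathbb P$-a.s.\ isometry of the finite-dimensional space $\R^3$, it is invertible, and from $\inpro{Z_t\vecu}{Z_t\vecv}=\inpro{\vecu}{\vecv}$ (polarisation) we read off $Z_t^\star Z_t = I$, i.e.\ $Z_t^{-1}=Z_t^\star$. A small care point: the exceptional null set in \eqref{iso} a priori depends on $\vecu$; one removes this dependence by establishing the identity simultaneously for a countable dense set of $\vecu$ (or for a basis) and using continuity in $\vecu$, so that the isometry and invertibility statements hold on a single full-measure event.

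For part (c), I would invoke the Kolmogorov continuity criterion for the two-parameter field $(t,x)\mapsto Z_t(x)$. The strategy is to derive, for $x,y\in\overline D$, an SDE for the difference $Z_t(x)-Z_t(y)$ — it satisfies the same type of linear equation with an inhomogeneous forcing proportional to $G_i(x)-G_i(y)$, whose operator norm is $O(|x-y|^\alpha)$ by the $C^\alpha$-hypothesis — and then to obtain moment bounds $\mE\|Z_t(x)-Z_t(y)\|^{2p} \le c_p\,|x-y|^{2p\alpha}$ and $\mE\|Z_t(x)-Z_s(x)\|^{2p}\le c_p\,|t-s|^p$ via Burkholder--Davis--Gundy and Gr\"onwall; since $p$ can be taken arbitrarily large, the exponents eventually beat the dimension and Kolmogorov's theorem delivers a jointly continuous version. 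The main obstacle is this last part: one must set up the a priori $L^{2p}$ estimates uniformly in the parameter and combine the spatial and temporal increments into a single Kolmogorov-type bound, which is the only place where genuine (if routine) stochastic-analytic work is needed; parts (a) and (b) are essentially formal once the Stratonovich-to-It\^o dictionary and the identities \eqref{eq: nabZ2}--\eqref{eq: nabZ3} are in hand.
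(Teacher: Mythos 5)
Your proposal is correct and follows essentially the same route as the paper: Stratonovich-to-It\^o conversion plus Lipschitz (bounded linear) coefficients for (a), the isometry computation driven by $G_i^\star=-G_i$ for (b), and spatial/temporal increment moment bounds via Gr\"onwall followed by the Kolmogorov--Chentsov theorem for (c). The only cosmetic difference is that you run the norm computation in (b) at the Stratonovich level, whereas the paper works in It\^o form where the drift term $\inpro{Z_t\vecu}{G_i^2Z_t\vecu}\dt$ cancels against the quadratic-variation term $|G_iZ_t\vecu|^2\dt$; your remark about removing the $\vecu$-dependence of the null set before concluding invertibility is a welcome refinement that the paper leaves implicit.
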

\begin{proof}
Equation \eqref{eqz} can be equivalently written as an It\^o equation
\begin{equation}\label{eqz-i}
Z_t=I+\frac{1}{2}\sum_{i=1}^q\int_0^t G_i^2Z_s\,ds+\sum_{i=1}^q\int_0^t G_iZ_s\, dW_i(s),\quad t\ge 0\,.
\end{equation}
Since the coefficients of equation \eqref{eqz-i} are Lipschitz, the existence and uniqueness of strong solutions to equation \eqref{eqz-i}, and the existence of its continuous version is standard, see for example Theorem 18.3 in \cite{kallenberg}. Hence, the same result holds for \eqref{eqz}. 
\par\noindent
To prove (b) we fix $x\in\overline{D}$, $t\ge 0$ and $\vecu\in\mathbb R^3$ and put $Z^{\vecu}=Z\vecu$. Then equation \eqref{eqz-i} yields 
\begin{equation}\label{auE:2}
Z_t^{\vecu}=\vecu+\frac{1}{2}\sum_{i=1}^q
\int\limits_0^t G_i^2Z_s^{\vecu}\ds
+\sum_{i=1}^q
\int\limits_0^tG_iZ_s^{\vecu} \,dW_i(s)\,.
\end{equation}
Applying the It\^o formula to the process $|Z_t^{\vecu}|^2$ and invoking \eqref{eq: nabZ2} we obtain 
\begin{align*}
d|Z_t^{\vecu}|^2
&=
2\inpro{Z_t^{\vecu}}{dZ_t^{\vecu}} 
+ 
\sum_{i=1}^q
|G_iZ_t^{\vecu}|^2\dt\\
&=
\sum_{i=1}^q
\inpro{Z_t^{\vecu}}{G_i^2Z_t^{\vecu}}\dt
+
2
\sum_{i=1}^q
\inpro{Z_t^{\vecu}}{G_iZ_t^{\vecu}}\,dW_i(t)
+
\sum_{i=1}^q
|G_iZ_t^{\vecu}|^2\dt\\
&=0, 
\end{align*}
or equivalently 
\begin{equation*}
|Z_t^{\vecu}|^2=|\vecu|^2,\quad\mathrm{for\,\,all\,\,}t\ge 0,\quad \mP-a.s..
\end{equation*}

To prove (c), we begin by letting $0\le s<t\le T$ and $x,y\in\overline{D}$. For any $p\ge 1$ we have 
\begin{equation}\label{reg1}
\mathbb E\left|Z_{t}(y)-Z_{s}(x)\right|^p\le 2^{p-1}\mathbb E\left|Z_{t}(y)-Z_{s}(y)\right|^p
+2^{p-1}\mathbb E\left|Z_{s}(y)-Z_{s}(x)\right|^p\,.
\end{equation}
It is well known that there exists $C_1>0$ such that 
\begin{equation}\label{reg2}
\mathbb E\left|Z_{t}(y)-Z_{s}(y)\right|^p\le C_1|t-s|^{\frac{p}{2}}.
\end{equation}
If there exists $\alpha\in(0,1]$ such that 
\[|g_i(x)-g_i(y)|\le c_i|x-y|^\alpha,\quad x,y\in \overline{D},\,\,i=1,\ldots, q\]
then for a certain $C>0$, for any $h\in\R^3$ there holds 
\begin{equation}\label{g1}
\begin{aligned}
|G_i(x)h|&\le C|h|\,,\\
|(G_i(x)-G_i(y))h|&\le C|x-y|^\alpha |h|\,,\\
\left|\left(G_i^2(x)-G_i^2(y)\right)h\right|&\le C|x-y|^\alpha |h|.
\end{aligned}
\end{equation}
Then 
\[\begin{aligned}
Z_{s}(y)-Z_{s}(x)&=\frac{1}{2}\sum_{i=1}^q\int_0^s G_i^2(y)Z_r(y)\,dr
+\sum_{i=1}^q\int_0^sG_i(y)Z_r(y)dW_i(r)\\
&\,\,\,\,\,\,\,-\frac{1}{2}\sum_{i=1}^q\int_0^sG_i^2(x)Z_r(x)\,dr-\sum_{i=1}^q\int_0^sG_i(x)Z_r(x)dW_i(r)\\
&=\frac{1}{2}\sum_{i=1}^q\int_0^s \left(G_i^2(y)-G_i^2(x)\right)Z_r(y)\,dr+\frac{1}{2}\sum_{i=1}^q\int_0^s G_i^2(x)\left(Z_r(y)-Z_r(x)\right)\,dr\\
&\,\,\,\,\,\,\,+\sum_{i=1}^q\int_0^s\left(G_i(y)-G_i(x)\right)Z_r(y)dW_i(r)+\sum_{i=1}^q\int_0^sG_i(x)\left(Z_r(y)-Z_r(x)\right)dW_i(r).
\end{aligned}\]
Using \eqref{g1} we obtain
\[\begin{aligned}
\mathbb E\left|Z_{s}(y)-Z_{s}(x)\right|^p&\le\widetilde{C}|x-y|^{\alpha p}+\widetilde{C}\int_0^s\mathbb E\left|Z_r(y)-Z_r(x)\right|^pdr. 
\end{aligned}\]
Therefore, invoking the Gronwall Lemma we obtain 
\begin{equation}\label{reg3}
\mathbb E\left|Z_{s}(y)-Z_{s}(x)\right|^p\le \widetilde{C}e^{\widetilde{C}T}|x-y|^{\alpha p}.
\end{equation}
Combining \eqref{reg1}, \eqref{reg2} and \eqref{reg3} we obtain 
\begin{equation}\label{reg4}
\mathbb E\left|Z_{t}(y)-Z_{s}(x)\right|^p\le c_1|t-s|^{\frac{p}{2}}+c_2|x-y|^{\alpha p}.
\end{equation}
Let $\beta>0$ and $r=d+1+\beta$. Let $p$ be chosen in such a way that 
\[\frac{p}{2}\ge r\quad\mathrm{and}\quad p\alpha\ge r\,.\]
The set $[0,T]\times D$ can be covered by a finite number of open sets $B_k$ with the property $|t-s|^r+|x-y|^r<1$ on every set $B_k$. In each $B_k$, \eqref{reg4} then  yields 
\[\mathbb E\left|Z_{t}(y)-Z_{s}(x)\right|^p\le c\left(|t-s|^r+|x-y|^r\right)\,,\]
and the result then follows by the Kolmogorv-Chentsov theorem, see p. 57 of \cite{kallenberg}.
 \end{proof}
\begin{lemma}\label{lemnew2}
Assume that $\vecg_i\in C^{1+\alpha}_b(D,\mathbb R^3)$. Then the following holds.\\
(a) For every $t\ge 0$ we have $Z_t\in C^1_b(D,\mathcal L(\mathbb R^3))$ $\mathbb P$-a.s. \\
(b) For every $x\in D$ the process $\xi_t(x)=\nabla Z_t(x)$ is the unique solution of the linear It\^o equation
\begin{equation*}
d\xi_t(x)
=
\frac12\sum_{i=1}^q
\bigl(
G_i^2\xi_t(x) + H_iZ_t(x)
\bigr)\dt
+
\sum_{i=1}^q
\bigl(
G_i\xi_t(x) + I_i Z_t(x)
\bigr)\,dW_i(t),
\end{equation*}
with $\xi_0(x)=0$ and the operators $H_i,I_i\in\mathcal L\left(\mathbb R^3\right)$ defined as 
\begin{align*}
I_i\vecu = \vecu\times\nabla\vecg_i
\quad\text{and}\quad
H_i\vecu = G_i\vecu\times\nabla\vecg_i + G_i(\vecu\times\nabla\vecg_i).
\end{align*}
(c) For every $\gamma<\min\left(\alpha,\frac{1}{2}\right)$ the mapping $(t,x)\to\nabla Z_t(x)$ is $\gamma$-H\"older continuous. \\
(d) We have
\[\mathbb E\sup_{t\le T}\,\sup_{x\in\overline{D}}|\nabla Z_t|^2<\infty\,.\]
\end{lemma}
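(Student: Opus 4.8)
The plan is to establish all four parts together, following the template of the proof of Lemma~\ref{lemnew1}(c): first guess the equation satisfied by the spatial derivative, then obtain moment and two-parameter H\"older estimates by Burkholder--Davis--Gundy, Gronwall, and Kolmogorov--Chentsov arguments. Two preliminary remarks will be used repeatedly. By \eqref{iso} the operator $Z_t(x)$ is a linear isometry of $\R^3$, hence $\|Z_t(x)\|_{\mathcal L(\R^3)}=1$ almost surely; and the hypothesis $\vecg_i\in C^{1+\alpha}_b(D,\R^3)$ makes $G_i,G_i^2,H_i,I_i$ bounded operator-valued maps on $\overline D$, with $G_i,G_i^2$ Lipschitz in $x$ and $H_i,I_i$ of class $C^\alpha$ in $x$ (the latter because $\nabla\vecg_i\in C^\alpha$).

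\textbf{Step 1: the derivative equation, parts (a) and (b).} For fixed $x$ the displayed linear It\^o equation for $\xi_t(x)$ has bounded, globally Lipschitz coefficients and an inhomogeneous term driven by the already-constructed process $Z_t(x)$, so it has a unique strong solution; BDG together with Gronwall give $\mathbb E\sup_{t\le T}|\xi_t(x)|^p\le C_p$ for every $p\ge1$, with $C_p$ independent of $x$ because $\|Z_t(x)\|\le1$. To show $\xi_t(x)=\nabla Z_t(x)$, fix a coordinate direction $\vece_j$ and set $\Delta^h Z_t(x):=h^{-1}\bigl(Z_t(x+h\vece_j)-Z_t(x)\bigr)$. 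Subtracting the integral equation \eqref{eqz-i} at $x+h\vece_j$ and at $x$ and applying the mean value theorem to $G_i$ and $G_i^2$, one finds that $\Delta^h Z_t(x)$ solves an equation of the same form as the one for $\xi_t(x)$, but with $H_i,I_i$ replaced by difference quotients of $G_i^2,G_i$ that converge to $(H_i)_j,(I_i)_j$ uniformly with rate $|h|^\alpha$ --- this is exactly where $C^{1+\alpha}_b$ rather than merely $C^1_b$ is needed. Subtracting the two equations, inserting and subtracting $(H_i)_j(x)Z_s(x)$, and using Lemma~\ref{lemnew1}(c) (with H\"older exponent $1$, as $\vecg_i$ is Lipschitz) to bound $\|Z_s(x+h\vece_j)-Z_s(x)\|$ in $L^p$ by $C|h|$, Gronwall yields $\mathbb E\sup_{t\le T}\bigl\|\Delta^h Z_t(x)-\xi_t(x)\bigr\|^p\le C|h|^{\alpha p}$. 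Thus $Z_t(\cdot)$ is $L^p$-differentiable with derivative $\xi_t$; combined with the joint continuity of $\xi$ obtained in Step~2 this gives that the continuous version of $Z$ is of class $C^1_b$ with $\nabla Z_t=\xi_t$, proving (a) and (b).

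\textbf{Step 2: part (c).} The goal is the bound
\[
\mathbb E\bigl\|\xi_t(y)-\xi_s(x)\bigr\|^p\le c_1|t-s|^{p/2}+c_2|x-y|^{\alpha p},\qquad 0\le s\le t\le T,\ x,y\in\overline D,
\]
for every large $p$. The temporal increment is $\le c_1|t-s|^{p/2}$ directly from the equation, via BDG and the uniform moment bounds on $\xi$ and $Z$. For the spatial increment, subtract the equations for $\xi_r(y)$ and $\xi_r(x)$: the drift and diffusion terms $G_i^2\xi_r,G_i\xi_r$ split into a Lipschitz-in-$x$ factor times $\xi_r(y)-\xi_r(x)$ plus a bounded factor times $|x-y|$ (Lipschitz continuity of $G_i,G_i^2$), while the forcing terms $H_iZ_r,I_iZ_r$ produce a term $O(|x-y|^\alpha)$ from the $C^\alpha$ dependence of $H_i,I_i$ plus a bounded factor times $\|Z_r(y)-Z_r(x)\|$, the latter controlled in $L^p$ by $C|x-y|$ via Lemma~\ref{lemnew1}(c). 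Since $\alpha<1$, Gronwall gives $\mathbb E\|\xi_s(y)-\xi_s(x)\|^p\le C|x-y|^{\alpha p}$, whence the displayed estimate. Choosing $p$ large enough that $p/2$ and $\alpha p$ both exceed $d+1$, a covering of $[0,T]\times\overline D$ exactly as in the proof of Lemma~\ref{lemnew1}(c) and the Kolmogorov--Chentsov theorem (p.~57 of \cite{kallenberg}) produce a version of $(t,x)\mapsto\nabla Z_t(x)$ that is $\gamma$-H\"older continuous for every $\gamma<\min(\alpha,\tfrac12)$.

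\textbf{Step 3: part (d), and the main obstacle.} The quantitative modulus-of-continuity bound furnished by Kolmogorov--Chentsov (equivalently, Garsia--Rodemich--Rumsey) applied to the estimate of Step~2 gives $\mathbb E\,\|\nabla Z\|_{C^\gamma([0,T]\times\overline D)}^p<\infty$; together with $\xi_0\equiv0$ this yields $\mathbb E\sup_{t\le T}\sup_{x\in\overline D}\|\nabla Z_t\|^p<\infty$ for the chosen $p$, hence also for $p=2$ by Jensen, which is (d). The genuinely delicate point is Step~1: setting up the difference-quotient equation correctly and controlling the error coming from the difference quotients of the $\vecg_i$, so that the Gronwall estimate closes with a positive power of $|h|$; once that is done, Steps~2 and~3 are routine adaptations of the Gronwall/Kolmogorov--Chentsov argument already carried out for Lemma~\ref{lemnew1}.
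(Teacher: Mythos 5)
Your argument is correct, and for parts (c) and (d) it is exactly what the paper does (the paper simply says ``analogous to the proof of part (c) of Lemma \ref{lemnew1}'' and ``follows easily from (c)''; your Step 2 and Step 3 supply precisely that Gronwall/Kolmogorov--Chentsov argument, including the quantitative modulus bound that (d) actually requires beyond the literal statement of (c)). Where you genuinely diverge is in parts (a) and (b): the paper proves these by citation, checking that the map $\mathcal K(x,Z)(t)=I+\sum_i\int_0^tG_i(x)Z_s\circ dW_i(s)$ satisfies the hypotheses of the parametrized contraction principle (Lemma 9.2 of Da Prato--Zabczyk) and then invoking their Theorem 9.8 on differentiability of the fixed point with respect to the parameter, which automatically identifies the derivative as the solution of the formally differentiated equation. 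You instead run the difference-quotient argument by hand: this is more self-contained, makes explicit where the $C^{1+\alpha}_b$ hypothesis (rather than $C^1_b$) enters, and produces the useful quantitative rate $\mathbb E\sup_{t\le T}|\Delta^hZ_t(x)-\xi_t(x)|^p\le C|h|^{\alpha p}$, at the cost of length. One step you gloss over is the passage from $L^p$-differentiability at each fixed $x$, plus joint continuity of $\xi$, to almost-sure classical $C^1$ regularity of the continuous version of $Z$; this needs a short additional argument (e.g.\ establish $Z_t(x+h\vece_j)-Z_t(x)=\int_0^h\xi_t(x+s\vece_j)\,ds$ in $L^p$ by a telescoping/dyadic refinement using your rate $C|h|^{1+\alpha}$ on subintervals, then upgrade to an almost-sure identity for all $(t,x,h)$ by continuity and apply the fundamental theorem of calculus). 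That argument is standard and closes without difficulty, so I regard the proposal as complete in substance; it is in fact considerably more detailed than the paper's own proof.
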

\begin{proof}
(a) Let $E$ denote the Banach space of continuous and adapted processes $Z$ taking values in the space of  linear operators $\mathcal L\left(\mathbb R^3\right)$ and endowed with the norm 
\[\|Z\|_E=\left(\mathbb E\sup_{t\le T}|Z_t|^2\right)^{1/2}\,.\]
For every $x\in D$ we define a mapping 
\[\mathcal K:D\times E\to D\times E,\quad \mathcal K(x,Z)(t)=I+\sum_{i=1}^q\int_0^tG_i(x)Z_s\circ dW_i(s)\,.\]
It is easy to check that the assumptions of Lemma 9.2, p. 238 in \cite{DaPrato92}
are satisfied and therefore (a) holds.\\
(b) The proof is completely analogous to the proof of Theorem 9.8 in \cite{DaPrato92}, and is hence omitted.\\
(c) The proof is analogous to the proof of part (c) of Lemma \ref{lemnew1}.\\
(d) The estimate follows easily from (c). 
\end{proof}
For every $\vecu\in\mL^2(D)$ we will consider the $L^2(D)$-valued process $Z_t(u)$ defined by 
\[[Z_t(u)](x)=Z_t(x)u(x)\quad x-a.e.\]
Clearly, 
\begin{equation}\label{auE:1}
Z_t(\vecu) 
= 
\vecu
+
\sum_{i=1}^q
\int_0^t
Z_s(\vecu)\times\vecg_i\circ dW_i(s),
\quad t\geq 0\,,
\end{equation}
where the equality holds in $\mL^2(D)$. The process $Z_t$ is now an operator-valued process taking values $\mathcal L\left(\mL^2(D)\right)$ and it will still be denoted by $Z_t$. 
The next lemmas follow immediately from the properties of the matrix-valued process considered above. 
\begin{lemma}\label{lem:auxi1} Assume that $\left\{g_i:\, i=1,\ldots,q\right\}\subset C_b^{1+\alpha}(D)$. Then for every $\vecu\in\mathbb L^2(D)$ the stochastic differential equation \eqref{auE:1} has a unique strong continuous solution in $\mathbb L^2(D)$. 
Moreover, there exists $\Omega_0\subset\Omega$ such that $\mP\left(\Omega_0\right)=1$ and for every $\omega\in\Omega_0$ the following holds.\\
(a) 
For all $t\geq 0$ and every $\vecu\in\mL^2(D)$, 
\[
|Z_t(\omega,\vecu)| = |\vecu|\,.\label{auE:normZ}
\]
(b) For every $t\ge 0$ the mapping $\vecu\to Z_t(\omega,\vecu)$ defines a linear bounded operator $Z_t(\omega)$ on $\mathbb L^2(D)$.  In particular,
\begin{equation}\label{auE:sum}
Z_t(\omega,\vecu + \vecv)=
Z_t(\omega,\vecu) + Z_t(\omega,\vecv)\,.
\end{equation}
Moreover, for every $T>0$ there exists a constant $C_T>0$ such that 
\begin{equation}\label{sup}
\mE\sup_{t\le T}\left|Z_t(\vecu)\right|^2_{\mL^2(D)}\le C_T|\vecu|^2_{\mL^2(D)}\,.
\end{equation}
(c) For every $t\ge 0$ the operator $Z_t(\omega)$ is invertible and the inverse operator is the unique solution of the stochastic differential equation on $\mathbb L^2(D)$: 
\begin{equation}\label{auE:1inv}
Z_t^{-1}(\vecu) 
= \vecu-\sum_{i=1}^q\int_0^tZ_s^{-1}G_i(\vecu)\circ dW_i(s),\quad\vecu\in \mL^2(D)\,.
\end{equation}
Finally, 
\begin{equation}\label{auE:adj}
Z_t^{-1}(\omega)=Z_t^\star(\omega). 
\end{equation}
\end{lemma}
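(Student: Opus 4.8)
The plan is to deduce Lemma~\ref{lem:auxi1} directly from the finite-dimensional results of Lemmas~\ref{lemnew1} and~\ref{lemnew2}, exploiting the pointwise (in $x$) definition $[Z_t(\vecu)](x) = Z_t(x)\vecu(x)$. The main point is that all the assertions (a)--(c) are fibrewise statements that we already know for the matrix-valued process $Z_t(x) \in \mathcal L(\R^3)$, and the only genuine work is to check measurability in $x$, to pass the $\mL^2(D)$-bounds and the $\mP$-a.s.\ statements through the integration over $D$, and to identify the exceptional null set $\Omega_0$ uniformly. First I would use part (c) of Lemma~\ref{lemnew1}: under $\vecg_i \in C_b^{1+\alpha}$ the map $(t,x) \mapsto Z_t(x)$ has a jointly continuous (hence jointly measurable) version, so for fixed $\omega$ outside a null set $N_1$ the operator field $x\mapsto Z_t(\omega,x)$ is continuous on $\overline D$; since $|Z_t(\omega,x)\vecv| = |\vecv|$ for all $\vecv \in \R^3$ by \eqref{iso}, it follows that for $\vecu \in \mL^2(D)$ the function $x\mapsto Z_t(\omega,x)\vecu(x)$ is measurable with $|[Z_t(\vecu)](x)| = |\vecu(x)|$ a.e., hence lies in $\mL^2(D)$ with $|Z_t(\omega,\vecu)|_{\mL^2(D)} = |\vecu|_{\mL^2(D)}$. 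That is (a); linearity \eqref{auE:sum} and boundedness in (b) are then immediate from fibrewise linearity and the isometry. For \eqref{sup} one only needs $\mE\sup_{t\le T}\sup_{x}|Z_t(x)|^2 < \infty$, which holds (indeed $|Z_t(x)|=1$ by the isometry), then integrate over $D$ and use Tonelli.

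Next, to show \eqref{auE:1} actually holds in $\mL^2(D)$ and that $Z_t(\vecu)$ is the unique strong solution, I would verify that the $\mL^2(D)$-valued process $t \mapsto Z_t(\vecu)$ satisfies \eqref{auE:1} by taking the known identity \eqref{auE:2} (equivalently \eqref{eqz-i}) at $\vecv = \vecu(x)$, applying it for a.e.\ $x$, and integrating: the stochastic Fubini theorem lets one interchange the $\mL^2(D)$-valued Bochner integral with the $\R^3$-valued It\^o/Stratonovich integrals fibrewise, because the integrands are uniformly bounded in $x$ (using $|Z_s(x)|=1$ and $\vecg_i \in L^\infty$). Uniqueness in $\mL^2(D)$ follows from the uniqueness of the fibrewise equation together with a standard Gronwall/Lipschitz argument applied directly to the $\mL^2(D)$-valued It\^o form, since $\vecu \mapsto G_i\vecu = \vecu\times\vecg_i$ is a bounded linear operator on $\mL^2(D)$ with norm $\le \|\vecg_i\|_{L^\infty}$. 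For part (c), the inverse: since $Z_t(\omega,x)^{-1} = Z_t(\omega,x)^\star$ fibrewise by Lemma~\ref{lemnew1}(b), define $Z_t^{-1}(\omega)$ by $[Z_t^{-1}(\vecu)](x) = Z_t(\omega,x)^\star \vecu(x)$; this is again an $\mL^2(D)$-isometry, is a two-sided inverse of $Z_t(\omega)$ by the pointwise identity, and \eqref{auE:adj} is then just the definition. To get \eqref{auE:1inv}, differentiate the identity $Z_t^{-1}Z_t = I$ (on $\mL^2(D)$) using the It\^o product rule, or equivalently transpose \eqref{eqz-i}, using $(G_i)^\star = -G_i$ and $(G_i^2)^\star = G_i^2$ from \eqref{eq: nabZ2}--\eqref{eq: nabZ3}; the Stratonovich form \eqref{auE:1inv} is the correction-free rewriting.

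The one subtlety I would be careful about is the \emph{uniform} exceptional set: each fibrewise a.s.\ statement (isometry, invertibility) a priori holds on an $x$-dependent null set $N_x \subset \Omega$, and $\bigcup_{x\in\overline D} N_x$ need not be null. This is precisely why Lemma~\ref{lemnew1}(c) (joint continuity of $(t,x)\mapsto Z_t(x)$) is invoked: on the single null set $N_1$ off which the continuous version exists, the isometry $|Z_t(\omega,x)\vecv|=|\vecv|$ extends from a countable dense set of $(t,x,\vecv)$ to all $(t,x,\vecv)$ by continuity, giving one $\Omega_0 = \Omega \setminus N_1$ that works simultaneously for all $\vecu \in \mL^2(D)$ and all $t$. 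With $\Omega_0$ fixed this way, every assertion in the lemma is a routine consequence of its $\R^3$-valued counterpart plus Tonelli; I do not expect any real obstacle beyond bookkeeping, so I would present the proof briefly, noting that the lemma ``follows immediately from the properties of the matrix-valued process considered above,'' and spell out only the identification of $\Omega_0$ and the passage of \eqref{auE:1} and \eqref{auE:1inv} to the $\mL^2(D)$ setting via stochastic Fubini.
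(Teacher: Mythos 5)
Your proposal is correct and follows exactly the route the paper intends: the paper itself gives no detailed argument, stating only that the lemma ``follows immediately from the properties of the matrix-valued process considered above,'' i.e.\ from the fibrewise results of Lemmas~\ref{lemnew1} and~\ref{lemnew2}. Your write-up supplies precisely the omitted bookkeeping (joint continuity to get a single null set $\Omega_0$, Tonelli for the $\mL^2(D)$ isometry, stochastic Fubini for \eqref{auE:1}, and transposition of \eqref{eqz-i} for \eqref{auE:1inv}), so it is a faithful and somewhat more careful version of the same argument.
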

\begin{lemma}\label{lem: gradZ}
Assume that $\vecg_i\in C_b^{1+\alpha}\left(D,\mathbb R^3\right)$  for $i=1,\cdots,q$. Then, for every $\vecu\in\mH^1(D)$
$Z_t(\vecu)\in \mH^1$ $\mP$-a.s. Furthermore, the process $\xi_t(\vecu):=\nabla Z_t(\vecu)$, is the unique solution of the linear equation
\begin{equation*}
d\xi_t(\vecu)
=
\frac12\sum_{i=1}^q
\bigl(
G_i^2\xi_t(\vecu) + H_iZ_t(\vecu)
\bigr)\dt
+
\sum_{i=1}^q
\bigl(
G_i\xi_t(\vecu) + I_i Z_t(\vecu)
\bigr)\,dW_i(t),
\end{equation*}
with $\xi_0(\vecu) = \nabla\vecu$. \end{lemma}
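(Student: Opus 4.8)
The plan is to reduce the statement to Lemma~\ref{lemnew2}, which already gives the analogous result for the matrix-valued process $Z_t(x)$, using the defining relation $[Z_t(\vecu)](x)=Z_t(x)\vecu(x)$ together with the Leibniz rule for weak derivatives. First I would fix $\omega$ in a set of full probability on which the conclusions of Lemmas~\ref{lemnew1}, \ref{lemnew2} and~\ref{lem:auxi1} all hold. By Lemma~\ref{lemnew2}(a) the map $x\mapsto Z_t(x)$ lies in $C^1_b(D,\mathcal L(\mathbb R^3))$, and by Lemma~\ref{lemnew2}(d) we have $\sup_{x\in\overline D}|\nabla Z_t(x)|<\infty$. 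Hence for $\vecu\in\mH^1(D)$ the product of the $C^1_b$ matrix field $Z_t$ with $\vecu$ belongs to $\mH^1(D)$, with weak gradient
\[
\xi_t(\vecu):=\nabla\bigl[Z_t(\vecu)\bigr]=(\nabla Z_t)\,\vecu+Z_t\,\nabla\vecu ;
\]
the first term is in $\mL^2(D)$ since $\nabla Z_t$ is bounded on $\overline D$ and $\vecu\in\mL^2(D)$, and the second is in $\mL^2(D)$ since $|Z_t(x)\vecw|=|\vecw|$ pointwise (Lemma~\ref{lemnew1}(b)) and $\nabla\vecu\in\mL^2(D)$. This proves $Z_t(\vecu)\in\mH^1$ $\mathbb P$-a.s., and in particular $\xi_0(\vecu)=\nabla\vecu$.

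Next I would derive the equation for $\xi_t(\vecu)$. For each $x$, I would compose the It\^o equation of Lemma~\ref{lemnew2}(b) for $\nabla Z_t(x)$ on the right with the vector $\vecu(x)$, compose equation~\eqref{eqz-i} for $Z_t(x)$ with $\nabla\vecu(x)$, and add the two. Regrouping the drift terms via $G_i^2\bigl((\nabla Z_t)\vecu+Z_t\nabla\vecu\bigr)=G_i^2\xi_t(\vecu)$ and the diffusion terms via $G_i\bigl((\nabla Z_t)\vecu+Z_t\nabla\vecu\bigr)=G_i\xi_t(\vecu)$, and using $Z_t(x)\vecu(x)=[Z_t(\vecu)](x)$, one obtains exactly the claimed coefficients $\tfrac12\sum_i\bigl(G_i^2\xi_t(\vecu)+H_iZ_t(\vecu)\bigr)$ and $\sum_i\bigl(G_i\xi_t(\vecu)+I_iZ_t(\vecu)\bigr)$. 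Since $G_i$, $G_i^2$, $H_i$ and $I_i$ are bounded operators on $\mL^2(D)$, a stochastic Fubini argument then lets one pass from these $x$-pointwise identities to the corresponding identity in $\mL^2(D)$. Uniqueness is immediate: with $Z_t(\vecu)$ already in hand, the stated equation is a linear inhomogeneous SDE in $\mL^2(D)$ with bounded coefficients, so Gronwall's inequality (or Lemma~9.2 in~\cite{DaPrato92}, as in the proof of Lemma~\ref{lemnew2}) gives at most one solution.

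The main obstacle is this last lift from the $x$-pointwise stochastic equation to an $\mL^2(D)$-valued one. A cleaner route avoiding a direct stochastic Fubini computation is to solve the abstract linear SDE for $\xi_t(\vecu)$ in $\mL^2(D)$ first — its coefficients are affine and Lipschitz in $\xi$ with the already-constructed data $Z_t(\vecu)$, so existence and uniqueness follow from~\cite{DaPrato92} — and only then to identify that solution with $\nabla Z_t(\vecu)$ by testing against smooth matrix fields, using Lemma~\ref{lemnew2}(b) together with the bound of Lemma~\ref{lemnew2}(d) to pass to the limit along a smooth approximation $\vecu_n\to\vecu$ in $\mH^1(D)$.
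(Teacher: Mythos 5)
Your proposal is correct and follows essentially the same route as the paper, which states that this lemma (together with Lemma~\ref{lem:auxi1}) ``follows immediately from the properties of the matrix-valued process considered above,'' i.e.\ precisely the reduction to Lemmas~\ref{lemnew1} and~\ref{lemnew2} via the pointwise identity $[Z_t(\vecu)](x)=Z_t(x)\vecu(x)$ and the Leibniz rule that you carry out. Your version simply supplies the details (the regrouping of the drift and diffusion terms and the lift to an $\mL^2(D)$-valued equation) that the paper leaves implicit, and the algebra checks out.
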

\begin{lemma}\label{lem:auxi2}
For any $\vecu,\vecv\in\mL^2(D)$, there holds for all $t\geq 0$ and $\mP$-a.s.:
\begin{align}
Z_t(\vecu\times\vecv)
&=
Z_t(\vecu)\times Z_t(\vecv),\label{auE:cross}
\end{align}
\end{lemma}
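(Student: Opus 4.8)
The plan is to reduce the identity $Z_t(\vecu\times\vecv)=Z_t(\vecu)\times Z_t(\vecv)$ to a pointwise statement about the matrix-valued process $Z_t(x)$ and then prove it there by an It\^o-calculus argument. First I would observe that by the definition $[Z_t(\vecw)](x)=Z_t(x)\vecw(x)$ it suffices to show, for every fixed $x\in\overline D$ and every pair of vectors $\veca,\vecb\in\R^3$, that
\begin{equation*}
Z_t(x)(\veca\times\vecb)=\bigl(Z_t(x)\veca\bigr)\times\bigl(Z_t(x)\vecb\bigr)\qquad\mP\text{-a.s., for all }t\ge 0;
\end{equation*}
once this holds off a single null set (using continuity in $t$ and a countable dense set of $(\veca,\vecb)$, then density), the $\mL^2(D)$ statement follows by applying it at a.e.\ $x$ with $\veca=\vecu(x)$, $\vecb=\vecv(x)$. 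So the real content is the finite-dimensional claim.

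For the finite-dimensional claim I would define $A_t=Z_t\veca$, $B_t=Z_t\vecb$, $C_t=Z_t(\veca\times\vecb)$ (suppressing $x$), all three being solutions of the linear It\^o equation \eqref{auE:2} with respective initial data $\veca,\vecb,\veca\times\vecb$, and set $P_t=A_t\times B_t$. The goal is to show $P_t$ solves the same linear equation with initial datum $\veca\times\vecb$, so that $P_t=C_t$ by uniqueness (Lemma~\ref{lemnew1}(a)). Applying the It\^o product rule to the cross product $P_t=A_t\times B_t$ gives
\begin{equation*}
dP_t=(dA_t)\times B_t+A_t\times(dB_t)+d[A,B]_t,
\end{equation*}
where the bracket term is $d[A,B]_t=\sum_{i=1}^q (G_iA_t)\times(G_iB_t)\dt$. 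Substituting $dA_t=\frac12\sum_i G_i^2A_t\dt+\sum_i G_iA_t\,dW_i(t)$ and similarly for $B_t$, the martingale part becomes $\sum_i\bigl((G_iA_t)\times B_t+A_t\times(G_iB_t)\bigr)\,dW_i(t)$, which equals $\sum_i G_i(A_t\times B_t)\,dW_i(t)=\sum_i G_iP_t\,dW_i(t)$ provided the algebraic identity
\begin{equation*}
(G_i\veca)\times\vecb+\veca\times(G_i\vecb)=G_i(\veca\times\vecb),\qquad G_i\vecw=\vecw\times\vecg_i,
\end{equation*}
holds; this is a direct computation with the vector triple product (the Jacobi-type identity for $\vecw\mapsto\vecw\times\vecg_i$ acting as a derivation on $\times$). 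For the drift, I must check that $\tfrac12\sum_i\bigl((G_i^2A_t)\times B_t+A_t\times(G_i^2B_t)\bigr)+\sum_i(G_iA_t)\times(G_iB_t)$ equals $\tfrac12\sum_i G_i^2(A_t\times B_t)=\tfrac12\sum_i G_i^2P_t$; this follows by applying the derivation identity twice, since $G_i^2(\veca\times\vecb)=(G_i^2\veca)\times\vecb+2(G_i\veca)\times(G_i\vecb)+\veca\times(G_i^2\vecb)$. With both parts matched, $P_t$ and $C_t$ solve the same linear SDE from the same start, hence coincide.

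The main obstacle is not analytic but bookkeeping: getting the It\^o correction term in $d(A_t\times B_t)$ exactly right and verifying the two algebraic derivation identities for $G_i$, after which uniqueness does all the work. A secondary point requiring a little care is the passage from the pointwise-in-$x$, fixed-$(\veca,\vecb)$ statement to the full $\mL^2(D)$ identity \eqref{auE:cross}: one fixes the common null set using the continuous version of $(t,x)\mapsto Z_t(x)$ from Lemma~\ref{lemnew1}(c) together with continuity and bilinearity in $(\veca,\vecb)$ on a countable dense set, and then reads off the claim for $\vecu,\vecv\in\mL^2(D)$ by evaluating at a.e.\ $x$. Alternatively, and perhaps more cleanly, one can run the uniqueness argument directly at the level of the $\mL^2(D)$-valued equation \eqref{auE:1} using the bilinear map $(\vecu,\vecv)\mapsto\vecu\times\vecv$ and the same two algebraic identities interpreted pointwise, avoiding the reduction altogether; I would present whichever version keeps the It\^o bracket computation most transparent.
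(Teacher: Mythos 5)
Your proposal is correct and follows essentially the same route as the paper: apply the It\^o product rule to the cross product $Z_t(\vecu)\times Z_t(\vecv)$, check via the vector triple-product algebra that both the martingale and drift parts reorganize into the SDE satisfied by $Z_t(\vecu\times\vecv)$, and conclude by uniqueness of solutions to that linear equation. Your packaging of the algebra as first- and second-order derivation identities for $G_i$ is just a cleaner restatement of the paper's explicit computations \eqref{auE:4}--\eqref{auE:5}, and your remarks on passing from the pointwise statement to the $\mL^2(D)$ identity fill in a step the paper leaves implicit.
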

\begin{proof}
Let $Z_t^{\vecu}:=Z_t(\vecu)$ and $Z_t^{\vecv}:=Z_t(\vecv)$ for all $t\geq 0$. 
We now prove~\eqref{auE:cross}; the property~\eqref{auE:sum} can be obtained in the same manner.
Using the It\^o formula for $Z_t^{\vecu}\times Z_t^{\vecv}$ and~\eqref{auE:2}, we obtain
\begin{align}
d(Z_t^{\vecu}\times Z_t^{\vecv})
&=
dZ_t^{\vecu}\times Z_t^{\vecv}
+ 
Z_t^{\vecu}\times dZ_t^{\vecv}
+
\sum_{i=1}^q
(Z_t^{\vecu}\times \vecg_i)\times (Z_t^{\vecv}\times \vecg_i) \dt\nn\\
&=
\sum_{i=1}^q
\big(
Z_t^{\vecu}\times (Z_t^{\vecv}\times\vecg_i)
-
Z_t^{\vecv}\times(Z_t^{\vecu}\times\vecg_i)  
\big)dW_i(t)\nn\\
&\quad+
\frac{1}{2}
\sum_{i=1}^q
\left(
Z_t^{\vecu}\times \big((Z_t^{\vecv}\times\vecg_i)\times\vecg_i\big)
-
Z_t^{\vecv}\times\big(Z_t^{\vecu}\times\vecg_i) \times\vecg_i\big) 
\right)\dt\label{auE:3}\\
&\quad+
\sum_{i=1}^q
(Z_t^{\vecu}\times \vecg_i)\times (Z_t^{\vecv}\times \vecg_i) \dt.\nn
\end{align}
Using an elementary identity 
\begin{equation}\label{equ:elemabc}
\veca\times(\vecb\times\vecc)
=
\inpro{\veca}{\vecc}\vecb
-
\inpro{\veca}{\vecb}\vecc\,,\quad \veca,\vecb,\vecc\in\R^3\,,
\end{equation}
we find that 
\begin{equation}\label{auE:4}
Z_t^{\vecu}\times (Z_t^{\vecv}\times\vecg_i)
-
Z_t^{\vecv}\times(Z_t^{\vecu}\times\vecg_i)  
=
(Z_t^{\vecu}\times Z_t^{\vecv})\times\vecg_i
\end{equation}
and
\begin{align}
Z_t^{\vecu}\times& \big((Z_t^{\vecv}\times\vecg_i)\times\vecg_i\big)
-
Z_t^{\vecv}\times\big(Z_t^{\vecu}\times\vecg_i) \times\vecg_i\big) \nn\\
&=
\inpro{Z_t^{\vecu}}{\vecg_i}(Z_t^{\vecv}\times\vecg_i)
-
\inpro{Z_t^{\vecv}}{\vecg_i}(Z_t^{\vecu}\times\vecg_i)
-
2(Z_t^{\vecu}\times\vecg_i) \times (Z_t^{\vecv}\times\vecg_i) \nn \\
&=
\big((Z_t^{\vecu}\times Z_t^{\vecv}) \times\vecg_i\big)\times\vecg_i
-
2(Z_t^{\vecu}\times\vecg_i) \times (Z_t^{\vecv}\times\vecg_i) .\label{auE:5}
\end{align}
Invoking~\eqref{auE:4} and~\eqref{auE:5}, equation~\eqref{auE:3} we obtain
\begin{equation*}
d(Z_t^{\vecu}\times Z_t^{\vecv})
=
\frac{1}{2}
\sum_{i=1}^q
\big((Z_t^{\vecu}\times Z_t^{\vecv}) \times\vecg_i\big)\times\vecg_i\dt
+
\sum_{i=1}^q
\big((Z_t^{\vecu}\times Z_t^{\vecv})\times\vecg_i\big)dW_i(t).
\end{equation*}
Therefore, the process $V_t:=Z_t^{\vecu}\times Z_t^{\vecv}$ is a solution of the following stochastic differential equation:
\begin{equation*}
\begin{cases}
&dV_t
=
\frac{1}{2}
\sum_{i=1}^q
\big(V_t\times\vecg_i\big)\times\vecg_i\dt
+
\sum_{i=1}^q
\big(V_t\times\vecg_i\big)dW_i(t)\\
&V_0 = \vecu\times\vecv.
\end{cases}
\end{equation*}
On the other hand, it follows from~\eqref{auE:2} that the process $Z_t(\vecu\times\vecv)$ satisfies the same equation. Hence, ~\eqref{auE:cross} follows from the uniqueness of solutions to~\eqref{auE:3}. 

\end{proof}
\begin{lemma}\label{lem: gradZ2}
For any $\vecu,\vecv\in\mH^1(D)$, there holds for all $t\geq 0$ and $\mP$-a.s.:
\begin{align}\label{eq: nabZ4}
\inpro{\nabla Z_t(\vecu)}{\nabla Z_t(\vecv)}_{\mL^2(D)}
&=
\inpro{\nabla \vecu}{\nabla \vecv}_{\mL^2(D)}
+
F(t,\vecu,\vecv),
\end{align}
with
\[
F(t,\vecu,\vecv):=
\sum_{i=1}^q
\int_0^t
F_{1,i}(s,\vecu,\vecv)\ds
+
\sum_{i=1}^q\int_0^t
F_{2,i}(s,\vecu,\vecv) \,dW_i(s)
\]
where 
\begin{align*}
F_{1,i}(t,\vecu,\vecv) 
&:=
\inpro{\nabla Z_t(\vecu)}{(\tfrac12H_i-G_iI_i)Z_t(\vecv)}_{\mL^2(D)}
-
\inpro{\nabla(\tfrac12H_i-G_iI_i)Z_t(\vecu)}{Z_t(\vecv)}_{\mL^2(D)}\nn\\
&\quad+
\inpro{I_i Z_t(\vecu)}{I_i Z_t(\vecv)}_{\mL^2(D)};
\end{align*}
and 
\begin{align*}
F_{2,i}(t,\vecu,\vecv)
:=
\inpro{\nabla Z_t(\vecu)}{I_i Z_t(\vecv)}_{\mL^2(D)}
-
\inpro{\nabla(I_i Z_t(\vecu))}{Z_t(\vecv)}_{\mL^2(D)}
\end{align*}
\end{lemma}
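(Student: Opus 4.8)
The plan is to apply the It\^o product formula to $\inpro{\xi_t(\vecu)}{\xi_t(\vecv)}_{\mL^2(D)}$, where $\xi_t(\vecu):=\nabla Z_t(\vecu)$ and $\xi_t(\vecv):=\nabla Z_t(\vecv)$; by Lemma~\ref{lem: gradZ} each of these is the unique strong solution of the linear It\^o equation stated there, with initial value $\nabla\vecu$, resp.\ $\nabla\vecv$, and with $G_i,H_i,I_i$ understood to act componentwise on the triples $(\pa_{x_1}\cdot,\pa_{x_2}\cdot,\pa_{x_3}\cdot)$. To be rigorous I would first apply the scalar It\^o formula to $t\mapsto\inpro{\xi_t(\vecu)(x)}{\xi_t(\vecv)(x)}_{\R^9}$ for a.e.\ $x\in D$, and then integrate in $x$, interchanging the $dx$-integral with the $dt$- and $dW_i$-integrals; this stochastic Fubini step is legitimate because of the uniform moment bound $\mE\sup_{t\le T}\sup_{x}|\nabla Z_t|^2<\infty$ of Lemma~\ref{lemnew2}(d) together with \eqref{sup}.

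The product rule exhibits $d\inpro{\xi_t(\vecu)}{\xi_t(\vecv)}_{\mL^2(D)}$ as the sum of $\inpro{d\xi_t(\vecu)}{\xi_t(\vecv)}_{\mL^2(D)}$, $\inpro{\xi_t(\vecu)}{d\xi_t(\vecv)}_{\mL^2(D)}$ and the It\^o correction $\sum_{i=1}^q\inpro{G_i\xi_t(\vecu)+I_iZ_t(\vecu)}{\,G_i\xi_t(\vecv)+I_iZ_t(\vecv)}_{\mL^2(D)}\dt$. The decisive point, which is the exact analogue of the norm-preservation computation in Lemma~\ref{lemnew1}(b) and Lemma~\ref{lem:auxi2}, is that the terms quadratic in $\xi_t$ all cancel: in the martingale part $\inpro{G_i\xi_t(\vecu)}{\xi_t(\vecv)}_{\mL^2(D)}+\inpro{\xi_t(\vecu)}{G_i\xi_t(\vecv)}_{\mL^2(D)}=0$ by the skew-symmetry \eqref{eq: nabZ2}; and in the drift the contribution $\tfrac12\inpro{G_i^2\xi_t(\vecu)}{\xi_t(\vecv)}_{\mL^2(D)}+\tfrac12\inpro{\xi_t(\vecu)}{G_i^2\xi_t(\vecv)}_{\mL^2(D)}$, which equals $\inpro{G_i^2\xi_t(\vecu)}{\xi_t(\vecv)}_{\mL^2(D)}$ by \eqref{eq: nabZ3}, is exactly cancelled by $\inpro{G_i\xi_t(\vecu)}{G_i\xi_t(\vecv)}_{\mL^2(D)}=-\inpro{G_i^2\xi_t(\vecu)}{\xi_t(\vecv)}_{\mL^2(D)}$ coming from the It\^o correction. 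Moving $G_i$ across in the remaining mixed $G_i\xi_t$--$I_iZ_t$ terms (again by \eqref{eq: nabZ2}), what survives is a drift $\sum_i\bigl[\inpro{\nabla Z_t(\vecu)}{(\tfrac12H_i-G_iI_i)Z_t(\vecv)}_{\mL^2(D)}+\inpro{(\tfrac12H_i-G_iI_i)Z_t(\vecu)}{\nabla Z_t(\vecv)}_{\mL^2(D)}+\inpro{I_iZ_t(\vecu)}{I_iZ_t(\vecv)}_{\mL^2(D)}\bigr]\dt$ and a martingale $\sum_i\bigl[\inpro{\nabla Z_t(\vecu)}{I_iZ_t(\vecv)}_{\mL^2(D)}+\inpro{I_iZ_t(\vecu)}{\nabla Z_t(\vecv)}_{\mL^2(D)}\bigr]dW_i(t)$.

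The last step is to integrate by parts in space so that the ``backward'' inner products $\inpro{\cA Z_t(\vecu)}{\nabla Z_t(\vecv)}_{\mL^2(D)}$ become $-\inpro{\nabla(\cA Z_t(\vecu))}{Z_t(\vecv)}_{\mL^2(D)}$, where for a triple $\vecphi$ we set $\nabla\vecphi:=\sum_j\pa_{x_j}\vecphi_j$; applied to $\cA=\tfrac12H_i-G_iI_i$ in the drift this produces $F_{1,i}$, and applied to $\cA=I_i$ in the martingale it produces $F_{2,i}$, so that, together with $\inpro{\xi_0(\vecu)}{\xi_0(\vecv)}_{\mL^2(D)}=\inpro{\nabla\vecu}{\nabla\vecv}_{\mL^2(D)}$ and integration from $0$ to $t$, we obtain \eqref{eq: nabZ4}. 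The point requiring attention is that the boundary integral $\int_{\pa D}\inpro{(\cA Z_t(\vecu))\cdot\vecn}{Z_t(\vecv)}\dsi$ vanishes for $\cA\in\{I_i,\tfrac12H_i-G_iI_i\}$: in both cases $(\cA\vecw)_j$ is built from the blocks $\vecw\times\pa_{x_j}\vecg_i$ and $G_i(\vecw\times\pa_{x_j}\vecg_i)$, so contracting against $n_j$ replaces $\pa_{x_j}\vecg_i$ by $\pa\vecg_i/\pa\vecn$, which is zero on $\pa D$ by the homogeneous Neumann condition imposed on the $\vecg_i$. I expect the main obstacle to be not conceptual but organisational: keeping straight the componentwise action of $G_i$, $H_i$, $I_i$ on gradient triples and the signs generated by \eqref{eq: nabZ2}--\eqref{eq: nabZ3}, together with the two routine justifications --- the Hilbert-space It\^o formula via stochastic Fubini, and the integration by parts, which needs the $L^2(\pa D)$ traces of $Z_t(\vecu),Z_t(\vecv)\in\mH^1(D)$ and the continuity of $\nabla\vecg_i$ up to $\pa D$, both available from $\vecg_i\in C_b^{1+\alpha}$.
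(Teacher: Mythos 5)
Your proposal is correct and follows essentially the same route as the paper: apply the It\^o product formula to $\inpro{\xi_t(\vecu)}{\xi_t(\vecv)}_{\mL^2(D)}$ using the SDE for $\xi_t=\nabla Z_t$ from Lemma~\ref{lem: gradZ}, cancel the terms quadratic in $\xi_t$ via \eqref{eq: nabZ2}--\eqref{eq: nabZ3}, move $G_i$ across the remaining mixed terms to form $\tfrac12H_i-G_iI_i$, and integrate by parts using the homogeneous Neumann condition on the $\vecg_i$ before integrating over $[0,t]$. The only difference is that you spell out the stochastic Fubini step and the vanishing of the boundary integral, which the paper leaves implicit.
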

\begin{proof}
Let $\xi_t^{\vecu} :=\xi_t(\vecu)$ and $\xi_t^{\vecv} :=\xi_t(\vecv)$ for all $t\geq 0$. 
In addition, we consider a $C^{\infty}$ function $\phi: \bigl(\mL^2(D)\bigr)^2\goto \R$ 
defined by $\phi(\vecx,\vecy)= \inpro{\vecx}{\vecy}_{\mL^2(D)}$. By using the It\^o Lemma we obtain
\begin{align}
d\inpro{\xi_t^{\vecu}}{\xi_t^{\vecv}}_{\mL^2(D)}
&=
\inpro{d\xi_t^{\vecu}}{\xi_t^{\vecv}}_{\mL^2(D)}
+
\inpro{\xi_t^{\vecu}}{d\xi_t^{\vecv}}_{\mL^2(D)}
+
\inpro{d\xi_t^{\vecu}}{d\xi_t^{\vecv}}_{\mL^2(D)}\nn\\
&=
\sum_{i=1}^q
\bigg(
\tfrac12
\inpro{G_i^2\xi_t^{\vecu}+H_i Z_t^{\vecu}}{\xi_t^{\vecv}}_{\mL^2(D)}
+
\tfrac12
\inpro{\xi_t^{\vecu}}{G_i^2\xi_t^{\vecv}+H_i Z_t^{\vecv}}_{\mL^2(D)}\nn\\
&\quad\quad\quad+
\inpro{G_i\xi_t^{\vecu}+I_i Z_t^{\vecu}}{G_i\xi_t^{\vecv}+I_i Z_t^{\vecv}}_{\mL^2(D)}
\bigg)\dt\nn\\
&\quad+
\sum_{i=1}^q
\bigg(
\inpro{G_i\xi_t^{\vecu}+I_i Z_t^{\vecu}}{\xi_t^{\vecv}}_{\mL^2(D)}
+
\inpro{\xi_t^{\vecu}}{G_i\xi_t^{\vecv}+I_i Z_t^{\vecv}}_{\mL^2(D)}
\bigg) \,dW_i(t).\label{eq: nabZ1}
\end{align}
Using~\eqref{eq: nabZ2} and~\eqref{eq: nabZ3},
 we deduce from~\eqref{eq: nabZ1} that 
\begin{align*}
d\inpro{\xi_t^{\vecu}}{\xi_t^{\vecv}}_{\mL^2(D)}
&=
\sum_{i=1}^q
\bigg(
\tfrac12
\inpro{H_i Z_t^{\vecu}}{\xi_t^{\vecv}}_{\mL^2(D)}
+
\tfrac12
\inpro{\xi_t^{\vecu}}{H_i Z_t^{\vecv}}_{\mL^2(D)}
+
\inpro{G_i\xi_t^{\vecu}}{I_i Z_t^{\vecv}}_{\mL^2(D)}\nn\\
&\quad\quad\quad+
\inpro{I_i Z_t^{\vecu}}{G_i\xi_t^{\vecv}}_{\mL^2(D)}
+
\inpro{I_i Z_t^{\vecu}}{I_i Z_t^{\vecv}}_{\mL^2(D)}
\bigg)\dt\nn\\
&\quad+
\sum_{i=1}^q
\bigg(
\inpro{I_i Z_t^{\vecu}}{\xi_t^{\vecv}}_{\mL^2(D)}
+
\inpro{\xi_t^{\vecu}}{I_i Z_t^{\vecv}}_{\mL^2(D)}
\bigg) \,dW_i(t)\nn\\
&=
\sum_{i=1}^q
\bigg(
\inpro{(\tfrac12H_i-G_iI_i)Z_t^{\vecu}}{\xi_t^{\vecv}}_{\mL^2(D)}
+
\inpro{\xi_t^{\vecu}}{(\tfrac12H_i-G_iI_i)Z_t^{\vecv}}_{\mL^2(D)}\nn\\
&\quad\quad\quad+
\inpro{I_i Z_t^{\vecu}}{I_i Z_t^{\vecv}}_{\mL^2(D)}
\bigg)\dt\nn\\
&\quad+
\sum_{i=1}^q
\bigg(
\inpro{\xi_t^{\vecu}}{I_i Z_t^{\vecv}}_{\mL^2(D)}
+
\inpro{I_i Z_t^{\vecu}}{\xi_t^{\vecv}}_{\mL^2(D)}
\bigg) \,dW_i(t).
\end{align*}
Integrating by parts for the first and the last term in the right hand side of the  
above equation and noting the homogeneous Neumann boundary condition of $\vecg_i$, we obtain
\begin{align}\label{eq: nabZ5}
d\inpro{\xi_t^{\vecu}}{\xi_t^{\vecv}}_{\mL^2(D)}
&=
\sum_{i=1}^q
\bigg(
-
\inpro{\nabla(\tfrac12H_i-G_iI_i)Z_t^{\vecu}}{Z_t^{\vecv}}_{\mL^2(D)}
+
\inpro{\xi_t^{\vecu}}{(\tfrac12H_i-G_iI_i)Z_t^{\vecv}}_{\mL^2(D)}\nn\\
&\quad\quad\quad+
\inpro{I_i Z_t^{\vecu}}{I_i Z_t^{\vecv}}_{\mL^2(D)}
\bigg)\dt\nn\\
&\quad+
\sum_{i=1}^q
\bigg(
\inpro{\xi_t^{\vecu}}{I_i Z_t^{\vecv}}_{\mL^2(D)}
-
\inpro{\nabla(I_i Z_t^{\vecu})}{Z_t^{\vecv}}_{\mL^2(D)}
\bigg) \,dW_i(t).
\end{align}
Hence, the resutl follows from replacing $t$ by $s$ and intergrating~\eqref{eq: nabZ5} over $[0,t]$.
\end{proof}
\begin{remark}\label{rem: Fsym}
By using integration by parts and the homogeneous Neumann boundary conditions 
of $\vecg_i$ for $i=1,\cdots,q$ we obtain some symmetry properties of functions $F_{1,i}$, 
$F_{2,i}$ and $F$:
 for  any 
$\vecu,\vecv,\vecv_1,\vecv_2\in \mH^1(D)$, 
\[
F_{1,i}(t,\vecu,\vecv) = F_{1,i}(t,\vecv,\vecu);
\quad
F_{2,i}(t,\vecu,\vecv) = F_{2,i}(t,\vecv,\vecu);
\]
and hence, $F(t,\vecu,\vecv) = F(t,\vecv,\vecu)$. Furthermore, it follows from~\eqref{auE:sum} that
\[
F(t,\vecu,\vecv_1+\vecv_2) = F(t,\vecu,\vecv_1) + F(t,\vecu,\vecv_2).
\]
\end{remark}

The following lemmas state some important properties of $F$ used throughout this paper.
\begin{lemma}\label{lem: boundExF}
Assume that  
$\vecg_i\in\mW^{2,\infty}(D)$ for $i=1,\cdots,q$.
Then for any $\vecu,\vecv\in L^2(\Omega;\mH^1(D))$ there exists a constant $c$ depending on  $T$ and $\{\vecg_i\}_{i=1,\cdots,q}$ such that
\begin{align}
\mE\sup_{t\in[0,T]}\|\nabla Z_t(\vecu)\|_{\mL^2(D)}^2
\leq
c\mE\|\vecu\|_{\mH^1(D)}^2,\label{auE:normdeZ}
\end{align}
and for any $\epsilon >0$,
\begin{align}
\mE\sup_{s\in[0,t]}
\bigl|F(s,\vecu,\vecv)\bigr|
\leq 
c\epsilon 
\mE\|\vecu\|_{\mH^1(D)}^2
+
c\epsilon^{-1}
\mE\|\vecv\|_{\mL^2(D)}^2.\label{eq: boundExF}
\end{align}
\end{lemma}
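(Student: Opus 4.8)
The plan is to prove \eqref{auE:normdeZ} first and then feed it into \eqref{eq: boundExF}. For \eqref{auE:normdeZ}: since $\vecg_i\in\mW^{2,\infty}(D)$ the operators $G_i,H_i,I_i$ act boundedly on $\mL^2(D)$, with norms controlled by $\|\vecg_i\|_{\mW^{2,\infty}(D)}$, and Lemma~\ref{lem:auxi1}(a) gives the isometry $\|Z_t(\vecu)\|_{\mL^2(D)}=\|\vecu\|_{\mL^2(D)}$. I would work with the linear It\^o equation for $\xi_t(\vecu)=\nabla Z_t(\vecu)$ supplied by Lemma~\ref{lem: gradZ}: applying the It\^o formula to $\|\xi_t(\vecu)\|_{\mL^2(D)}^2$, the drift is bounded by $c\bigl(\|\xi_s(\vecu)\|_{\mL^2(D)}^2+\|\vecu\|_{\mL^2(D)}^2\bigr)$ and the quadratic variation of the martingale part by $c\int_0^t\|\xi_s(\vecu)\|_{\mL^2(D)}^2\bigl(\|\xi_s(\vecu)\|_{\mL^2(D)}^2+\|\vecu\|_{\mL^2(D)}^2\bigr)\,ds$. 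Taking the supremum over $s\le t$ and expectation, applying the Burkholder--Davis--Gundy inequality to the martingale part and then Young's inequality $ab\le\tfrac14 a^2+cb^2$, one absorbs $\tfrac14\mE\sup_{s\le t}\|\xi_s(\vecu)\|_{\mL^2(D)}^2$ into the left-hand side and is left with $\mE\sup_{s\le t}\|\xi_s(\vecu)\|_{\mL^2(D)}^2\le c\,\mE\|\nabla\vecu\|_{\mL^2(D)}^2+cT\,\mE\|\vecu\|_{\mL^2(D)}^2+c\int_0^t\mE\sup_{r\le s}\|\xi_r(\vecu)\|_{\mL^2(D)}^2\,ds$; Gronwall's lemma then yields \eqref{auE:normdeZ}. (A shorter route, valid when $\vecu$ is independent of the noise, uses the product rule $\nabla Z_t(\vecu)(x)=(\nabla Z_t(x))\vecu(x)+Z_t(x)\nabla\vecu(x)$, the pointwise isometry $|Z_t(x)|=1$ from Lemma~\ref{lemnew1}(b), and the bound of Lemma~\ref{lemnew2}(d).)

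For \eqref{eq: boundExF}, the heart of the matter is the $\mP$-a.s.\ pointwise-in-time bound
\[
|F_{1,i}(t,\vecu,\vecv)|+|F_{2,i}(t,\vecu,\vecv)|\le c\bigl(\|\nabla Z_t(\vecu)\|_{\mL^2(D)}+\|\vecu\|_{\mL^2(D)}\bigr)\|\vecv\|_{\mL^2(D)}.
\]
To obtain it I would expand $\nabla\bigl[(\tfrac12 H_i-G_iI_i)Z_t(\vecu)\bigr]$ and $\nabla\bigl(I_iZ_t(\vecu)\bigr)$ by the product rule. In the terms where $\nabla$ hits the operator coefficients one gets, after using $\vecg_i\in\mW^{2,\infty}(D)$ (this is precisely where second derivatives of $\vecg_i$ enter) together with $\|Z_t(\vecu)\|_{\mL^2(D)}=\|\vecu\|_{\mL^2(D)}$, a contribution bounded by $c\|\vecu\|_{\mL^2(D)}$; in the terms where $\nabla$ hits $Z_t(\vecu)$ one gets $c\|\nabla Z_t(\vecu)\|_{\mL^2(D)}$. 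Every factor on the $\vecv$-side is of the form $\|(\text{bounded operator})\,Z_t(\vecv)\|_{\mL^2(D)}\le c\|\vecv\|_{\mL^2(D)}$ by Lemma~\ref{lem:auxi1}(a); note $\vecv$ carries no derivative, which is why only the $\mL^2(D)$-norm of $\vecv$ appears. Cauchy--Schwarz then gives the displayed estimate.

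Given this, I would bound the bounded-variation and martingale parts of $F$ separately. For the former, $\sup_{s\le t}\bigl|\sum_i\int_0^s F_{1,i}(r,\vecu,\vecv)\,dr\bigr|\le cT\|\vecv\|_{\mL^2(D)}\bigl(\sup_{r\le t}\|\nabla Z_r(\vecu)\|_{\mL^2(D)}+\|\vecu\|_{\mL^2(D)}\bigr)$. For the latter, Burkholder--Davis--Gundy gives $\mE\sup_{s\le t}\bigl|\sum_i\int_0^s F_{2,i}(r,\vecu,\vecv)\,dW_i(r)\bigr|\le c\sqrt{T}\,\mE\bigl[\|\vecv\|_{\mL^2(D)}\bigl(\sup_{r\le t}\|\nabla Z_r(\vecu)\|_{\mL^2(D)}+\|\vecu\|_{\mL^2(D)}\bigr)\bigr]$. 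In both, Young's inequality $ab\le\epsilon a^2+c\epsilon^{-1}b^2$ applied $\omega$-wise, followed by \eqref{auE:normdeZ} to bound $\mE\sup_{r\le t}\|\nabla Z_r(\vecu)\|_{\mL^2(D)}^2$ by $c\,\mE\|\vecu\|_{\mH^1(D)}^2$, delivers \eqref{eq: boundExF}. The only real obstacle is bookkeeping: tracking which factors in $F_{1,i}$ and $F_{2,i}$ carry the spatial derivative (hence which require $\mW^{2,\infty}$ regularity of the $\vecg_i$), and arranging the interchange of $\sup$, expectation, Burkholder--Davis--Gundy and Young so that the $\epsilon$-weighted $\mH^1(D)$-norm of $\vecu$ and the $\epsilon^{-1}$-weighted $\mL^2(D)$-norm of $\vecv$ come out in exactly the stated form.
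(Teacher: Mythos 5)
Your proposal is correct and follows essentially the same route as the paper: pointwise bounds on $F_{1,i}$ and $F_{2,i}$ using the $\mW^{2,\infty}$ control of the operator coefficients and the isometry $\|Z_t(\vecu)\|_{\mL^2(D)}=\|\vecu\|_{\mL^2(D)}$, the Burkholder--Davis--Gundy inequality for the martingale part, Young's inequality to produce the $\epsilon$/$\epsilon^{-1}$ split, and Gronwall to close \eqref{auE:normdeZ}. The only cosmetic difference is that the paper obtains \eqref{auE:normdeZ} by taking $\vecv=\vecu$ in the identity \eqref{eq: nabZ4} of Lemma~\ref{lem: gradZ2} and estimating $F(t,\vecu,\vecu)$, which is the same It\^o computation on $\|\xi_t(\vecu)\|_{\mL^2(D)}^2$ that you carry out directly.
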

\begin{proof}
It follows from~\eqref{eq: nabZ4} that
\begin{align}\label{eq: ExF6}
\mE \|\nabla Z_t(\vecu)\|_{\mL^2(D)}^2
&=
\mE \|\nabla\vecu\|_{\mL^2(D)}^2
+
\mE[ F(t,\vecu,\vecu)]\\
&\leq 
\mE \|\nabla\vecu\|_{\mL^2(D)}^2
+
\sum_{i=1}^q
\mE
\int_0^t
\bigl| F_{1,i}(\tau,\vecu,\vecu)\bigr|\,d\tau\nn\\
&\quad+
\mE
\bigl|
\sum_{i=1}^q\int_0^t
F_{2,i}(\tau,\vecu,\vecu) \,dW_i(\tau)
\bigl|.
\end{align}

For convenience, we next estimate $| F(\tau,\vecu,\vecv)|$, which is 
a slightly more general version of $| F(\tau,\vecu,\vecu)|$. 
By using the elementary inequality 
\begin{equation}\label{eq: eleCauchy}
 ab\leq \tfrac12\epsilon a^2 + \tfrac12\epsilon^{-1} b^2,  
\end{equation}
the assumption $\vecg_i\in\mW^{2,\infty}(D)$ and~\eqref{auE:normZ}, there holds 
\begin{align*}
\bigl| F_{1,i}(\tau,\vecu,\vecv)\bigr|
&\leq 
\bigl|
\inpro{\nabla Z_{\tau}(\vecu)}{(\tfrac12H_i-G_iI_i)Z_{\tau}(\vecv)}_{\mL^2(D)}\bigr|\nn\\
&\quad+
\bigl|
\inpro{\nabla(\tfrac12H_i-G_iI_i)Z_{\tau}(\vecu)}{Z_{\tau}(\vecv)}_{\mL^2(D)}\bigr|
+
\bigl|
\inpro{I_i Z_{\tau}(\vecu)}{I_i Z_{\tau}(\vecv)}_{\mL^2(D)}\bigr|\nn\\
&\leq 
c\bigl(\epsilon\|\nabla Z_{\tau}(\vecu)\|^2_{\mL^2(D)}
+
\epsilon\|Z_{\tau}(\vecu)\|^2_{\mL^2(D)}
+
\epsilon^{-1}\|Z_{\tau}(\vecv)\|^2_{\mL^2(D)}\bigr)\nn\\
&\leq 
c\bigl(\epsilon\|\nabla Z_{\tau}(\vecu)\|^2_{\mL^2(D)}
+
\epsilon\|\vecu\|^2_{\mL^2(D)}
+
\epsilon^{-1}\|\vecv\|^2_{\mL^2(D)}\bigr).
\end{align*}
This implies that 
\begin{align}\label{eq: ExF2}
\mE
\int_0^t
\bigl| F_{1,i}(\tau,\vecu,\vecv)\bigr|\,d\tau
\leq 
c\epsilon t\mE\|\vecu\|^2_{\mL^2(D)}
+
c\epsilon^{-1}t\mE\|\vecv\|^2_{\mL^2(D)}
+
c\epsilon\mE
\int_0^t
\|\nabla Z_{\tau}(\vecu)\|^2_{\mL^2(D)}\,d\tau.
\end{align}
Then, by using the Burkholder-Davis-Gundy inequality, H\"older inequality,~\eqref{auE:normZ} and~\eqref{eq: eleCauchy}, 
we estimate 
\begin{align}\label{eq: exF4}
\mE\sup_{s\in[0,t]}
\bigl|
&\sum_{i=1}^q\int_0^s
F_{2,i}(\tau,\vecu,\vecv) \,dW_i(\tau)
\bigl|
\leq 
c
\mE
\bigl|
\sum_{i=1}^q\int_0^t
\bigl(F_{2,i}(\tau,\vecu,\vecv)\bigr)^2 \,d\tau
\bigl|^{1/2}\nn\\
&\leq 
c
\mE
\bigl|
\int_0^t
\bigl(
\|\nabla Z_{\tau}(\vecu)\|_{\mL^2(D)} 
\|Z_{\tau}(\vecv)\|_{\mL^2(D)}
+
\|Z_{\tau}(\vecu)\|_{\mL^2(D)}
\|Z_{\tau}(\vecv)\|_{\mL^2(D)}
\bigr)^2 \,d\tau
\bigl|^{1/2}\nn\\
&\leq
c
\mE
\bigl|
\int_0^t
\bigl(
\|\nabla Z_{\tau}(\vecu)\|^2_{\mL^2(D)} 
\|\vecv\|^2_{\mL^2(D)}
+
\|\vecu\|^2_{\mL^2(D)}
\|\vecv\|^2_{\mL^2(D)}
\bigr) \,d\tau
\bigl|^{1/2}\nn\\
&\leq 
c
\mE\left[
\|\vecv\|_{\mL^2(D)}
\bigl(
\int_0^t
\|\nabla Z_{\tau}(\vecu)\|^2_{\mL^2(D)} \,d\tau
\bigr) ^{1/2}
\right]
+
ct^{1/2}\mE\left[
\|\vecu\|_{\mL^2(D)}
\|\vecv\|_{\mL^2(D)}
\right]\\
&\leq 
c\epsilon t^{1/2}
\mE\|\vecu\|^2_{\mL^2(D)}
+
c\epsilon^{-1} (t^{1/2}+1)\mE
\|\vecv\|^2_{\mL^2(D)}
+
c\epsilon \mE
\int_0^t
\|\nabla Z_{\tau}(\vecu)\|^2_{\mL^2(D)}\,d\tau.\label{eq: exF5}
\end{align}

We use~\eqref{eq: ExF2} and~\eqref{eq: exF5} 
with $\vecv=\vecu$ and $\epsilon = 1$ together with~\eqref{eq: ExF6} to deduce
\begin{align*}
\mE \|\nabla Z_t(\vecu)\|_{\mL^2(D)}^2
&\leq 
c\mE \|\vecu\|_{\mH^1(D)}^2
+
c\mE
\int_0^t
\|\nabla Z_{\tau}(\vecu)\|^2_{\mL^2(D)}\,d\tau.
\end{align*}
Hence, the result~\eqref{auE:normdeZ} follows immediately by using Gronwall's inequality.

To prove~\eqref{eq: boundExF} we  note that 
\begin{align}\label{eq: ExF1}
\mE\sup_{s\in[0,t]}
\bigl|F(s,\vecu,\vecv)\bigr|
&\leq 
\sum_{i=1}^q
\mE
\int_0^t
\bigl| F_{1,i}(\tau,\vecu,\vecv)\bigr|\,d\tau
\nn\\
&\quad+
\mE\sup_{s\in[0,t]}
\bigl|
\sum_{i=1}^q\int_0^s
F_{2,i}(\tau,\vecu,\vecv) \,dW_i(\tau)
\bigl|.
\end{align}
Hence, it follows from~\eqref{eq: ExF2},~\eqref{eq: exF5} and~\eqref{auE:normdeZ} that 
\begin{align*}
\mE\sup_{s\in[0,t]}
\bigl|
\sum_{i=1}^q\int_0^s
F_{2,i}(\tau,\vecu,\vecv) \,dW_i(\tau)
\bigl|
&\leq 
c\epsilon \mE\|\vecu\|^2_{\mL^2(D)}
+
c\epsilon^{-1}\mE\|\vecv\|^2_{\mL^2(D)}\nn\\
&\quad+
c\epsilon\mE
\int_0^t
\|\nabla Z_{\tau}(\vecu)\|^2_{\mL^2(D)}\,d\tau\nn\\
&\leq 
c\bigl(
\epsilon \mE\|\vecu\|^2_{\mH^1(D)}
+
\epsilon^{-1} 
\mE\|\vecv\|^2_{\mL^2(D)}
\bigr),
\end{align*}
which completed the proof of the lemma.
\end{proof}
\begin{lemma}\label{lem: Fbound9}
For any $\vecu\in L^2\bigl(\Omega;\mL^2(D)\bigr)$, $\vecv\in L^2\bigl(\Omega;\mH^1(D)\bigr)$ and 
$0\leq  s\leq T$, there exists a constant $c$ depending on 
$\{\vecg_i\}_{i=1,\cdots,q}$ such that 
\begin{align*}
\mE
|F( s,\vecu ,\vecv )| 
&\leq 
c s \bigl(\mE[
\|\vecu \|_{\mL^2(D)}^2]\bigr)^{1/2}
\bigl(\mE
[\|\nabla\vecv \|_{\mL^2(D)}^2]\bigr)^{1/2}\nn\\
&\quad+
c( s^{1/2}+ s)\bigl(\mE[
\|\vecu \|_{\mL^2(D)}^2]\bigr)^{1/2}
\bigl(\mE[\|\vecv \|_{\mL^2(D)}^2
]\bigr)^{1/2}.
\end{align*}
\end{lemma}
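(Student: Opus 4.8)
The plan is to deduce the estimate from the pointwise‑in‑time bounds on $F_{1,i},F_{2,i}$ and from~\eqref{auE:normdeZ}, both already obtained inside the proof of Lemma~\ref{lem: boundExF}, the only genuinely new input being the symmetry of $F$ from Remark~\ref{rem: Fsym}. Since $\vecu$ lies only in $L^2(\Omega;\mL^2(D))$ — no spatial derivative, no moment beyond the second — the first move is to write, using $F(s,\vecu,\vecv)=F(s,\vecv,\vecu)$,
\[
F(s,\vecu,\vecv)=\sum_{i=1}^q\int_0^s F_{1,i}(\tau,\vecv,\vecu)\,d\tau+\sum_{i=1}^q\int_0^s F_{2,i}(\tau,\vecv,\vecu)\,dW_i(\tau),
\]
so that every factor that is differentiated in space carries the $\mH^1$‑regular argument $\vecv$. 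As in the proof of Lemma~\ref{lem: boundExF} (boundedness of $G_i,H_i,I_i$ with norms governed by $\|\vecg_i\|_{\mW^{2,\infty}(D)}$, the isometry $\|Z_\tau(\vecw)\|_{\mL^2(D)}=\|\vecw\|_{\mL^2(D)}$ of Lemma~\ref{lem:auxi1}, and the integration by parts that exploits the homogeneous Neumann condition on the $\vecg_i$), one gets, for all $\tau\le T$ and $\mP$‑a.s.,
\[
|F_{1,i}(\tau,\vecv,\vecu)|+|F_{2,i}(\tau,\vecv,\vecu)|\le c\bigl(\|\nabla Z_\tau(\vecv)\|_{\mL^2(D)}+\|\vecv\|_{\mL^2(D)}\bigr)\|\vecu\|_{\mL^2(D)}.
\]

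For the drift I would pull the time‑independent factor $\|\vecu\|_{\mL^2(D)}$ out of the integral, estimate $\int_0^s\|\nabla Z_\tau(\vecv)\|_{\mL^2(D)}\,d\tau\le s\sup_{\tau\le T}\|\nabla Z_\tau(\vecv)\|_{\mL^2(D)}$, apply the Cauchy--Schwarz inequality on $\Omega$ (so that only second moments enter), and then use~\eqref{auE:normdeZ} to replace $\mE\sup_{\tau\le T}\|\nabla Z_\tau(\vecv)\|_{\mL^2(D)}^2$ by $c\,\mE\|\vecv\|_{\mH^1(D)}^2$. Splitting $\|\vecv\|_{\mH^1(D)}^2=\|\vecv\|_{\mL^2(D)}^2+\|\nabla\vecv\|_{\mL^2(D)}^2$ and using $\sqrt{a+b}\le\sqrt a+\sqrt b$, the drift part is bounded by $cs(\mE\|\vecu\|_{\mL^2(D)}^2)^{1/2}(\mE\|\nabla\vecv\|_{\mL^2(D)}^2)^{1/2}+cs(\mE\|\vecu\|_{\mL^2(D)}^2)^{1/2}(\mE\|\vecv\|_{\mL^2(D)}^2)^{1/2}$.

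For the stochastic part I would use the Burkholder--Davis--Gundy inequality with exponent one,
\[
\mE\Bigl|\sum_{i=1}^q\int_0^s F_{2,i}(\tau,\vecv,\vecu)\,dW_i(\tau)\Bigr|\le c\,\mE\Bigl(\sum_{i=1}^q\int_0^s F_{2,i}(\tau,\vecv,\vecu)^2\,d\tau\Bigr)^{1/2},
\]
then insert the pointwise bound, factor out $\|\vecu\|_{\mL^2(D)}$ once more, and apply a single Cauchy--Schwarz on $\Omega$, which again keeps the estimate at second‑moment order, together with $\mE\int_0^s(\|\nabla Z_\tau(\vecv)\|_{\mL^2(D)}^2+\|\vecv\|_{\mL^2(D)}^2)\,d\tau\le cs\,\mE\|\vecv\|_{\mH^1(D)}^2$ (again from~\eqref{auE:normdeZ}). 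Taking the square root and splitting $\|\vecv\|_{\mH^1(D)}$ as above yields an $O(s^{1/2})$ multiple of $(\mE\|\vecu\|_{\mL^2(D)}^2)^{1/2}(\mE\|\nabla\vecv\|_{\mL^2(D)}^2)^{1/2}$ and of $(\mE\|\vecu\|_{\mL^2(D)}^2)^{1/2}(\mE\|\vecv\|_{\mL^2(D)}^2)^{1/2}$. Adding the drift and stochastic estimates and using $s\le s^{1/2}+s$ then gives the asserted inequality.

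The only step requiring care is the stochastic integral: since $\vecu$ has neither a spatial gradient nor an $L^4(\Omega)$ bound, the estimate has to be arranged so that $\vecu$ is never differentiated and never multiplied by $\nabla Z_\tau(\vecv)$ inside one and the same expectation. The symmetry of $F$ takes care of the first obstruction at the level of the integrands, while extracting the $\tau$‑independent scalar $\|\vecu\|_{\mL^2(D)}$ \emph{before} invoking Burkholder--Davis--Gundy and exactly one Cauchy--Schwarz on $\Omega$ keeps the probabilistic estimate at second‑moment order; everything else is routine.
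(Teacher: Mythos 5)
Your argument follows essentially the same route as the paper's: split $F$ into its drift and martingale parts, use the symmetry from Remark~\ref{rem: Fsym} (equivalently, integration by parts together with the Neumann condition on the $\vecg_i$) so that no spatial derivative ever lands on the $\mL^2$-only argument $\vecu$, bound the integrands by $c\,\|\vecu\|_{\mL^2(D)}\bigl(\|\nabla Z_\tau(\vecv)\|_{\mL^2(D)}+\|\vecv\|_{\mL^2(D)}\bigr)$, apply the Burkholder--Davis--Gundy inequality with exponent one, and close with a single Cauchy--Schwarz on $\Omega$ and~\eqref{auE:normdeZ}. The only genuine difference is cosmetic: you use a $\sup$-in-time bound for the drift term where the paper uses the Minkowski integral inequality~\eqref{eq: Fbound3}.

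One caveat on the final step. Your martingale estimate yields the factor $s^{1/2}$ in front of $\bigl(\mE\bigl[\|\vecu\|_{\mL^2(D)}^2\bigr]\bigr)^{1/2}\bigl(\mE\bigl[\|\nabla\vecv\|_{\mL^2(D)}^2\bigr]\bigr)^{1/2}$, and the inequality $s\le s^{1/2}+s$ goes the wrong way to convert this into the coefficient $cs$ asserted in the statement: for small $s$ one cannot dominate $s^{1/2}$ by $cs$. What you actually prove is the bound with $c(s^{1/2}+s)$ in front of \emph{both} terms. This is not a defect of your argument relative to the paper's: in passing from~\eqref{eq: Fbound4} to the first line of~\eqref{eq: Fbound8} the paper silently drops the term $c\,\mE\bigl[\|\vecu\|_{\mL^2(D)}\bigl(\int_0^s\|\nabla Z_\tau(\vecv)\|^2_{\mL^2(D)}\,d\tau\bigr)^{1/2}\bigr]$, which carries exactly the same $s^{1/2}$ contribution. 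The weaker $(s^{1/2}+s)$ form is in any case all that is used downstream, since Lemma~\ref{lem: Fbound} only claims an $O(k^{1/2})$ rate in~\eqref{eq: Fbound12}; so your proof is sound for every application, but you should either state the lemma with $c(s^{1/2}+s)$ on the gradient term or note explicitly that the sharper $cs$ coefficient is not what your (or the paper's) estimates deliver.
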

\begin{proof}
From the definition of function $F$ in Lemma~\ref{lem: gradZ2} and the triangle inequality, there holds
\begin{align}\label{eq: Fbound1}
\mE
|F( s,\vecu ,\vecv )|
\leq 
\sum_{i=1}^q
\mE
\int_0^{ s}
\bigl| F_{1,i}(\tau,\vecu ,\vecv )\bigr|\,d\tau
+
\mE
\bigl|
\sum_{i=1}^q\int_0^{ s}
F_{2,i}(\tau,\vecu ,\vecv ) \,dW_i(\tau)
\bigl|.
\end{align}
From Remark~\ref{rem: Fsym}, we note that
\begin{align*}
F_{2,i}(\tau,\vecu ,\vecv )
&=
F_{2,i}(\tau,\vecv ,\vecu ),
\end{align*}
and therefore, by using~\eqref{eq: exF4}, the last term of~\eqref{eq: Fbound1} 
can be estimated as follows:
\begin{align}\label{eq: Fbound4}
\mE
\bigl|
\sum_{i=1}^q\int_0^{ s}
F_{2,i}(\tau,\vecu ,\vecv ) \,dW_i(\tau)
\bigl|
&=
\mE
\bigl|
\sum_{i=1}^q\int_0^{ s}
F_{2,i}(\tau,\vecv ,\vecu ) \,dW_i(\tau)
\bigl|\nn\\
&\leq
c
\mE\left[
\|\vecu \|_{\mL^2(D)}
\bigl(
\int_0^{ s}
\|\nabla Z_{\tau}(\vecv )\|^2_{\mL^2(D)} \,d\tau
\bigr) ^{1/2}
\right]\nn\\
&\quad+
c s^{1/2}\mE\left[
\|\vecu \|_{\mL^2(D)}
\|\vecv \|_{\mL^2(D)}
\right]. 
\end{align}
We now estimate $\bigl| F_{1,i}(\tau,\vecu ,\vecv )\bigr|$ by 
integrating by parts and then using 
H\"older's inequality, the assumption $\vecg_i\in \mW^{2,\infty}(D)$ and~\eqref{auE:normZ} as follows:
\begin{align*}
\bigl| F_{1,i}(\tau,\vecu ,\vecv )\bigr|
&=
\bigl|
-\inpro{Z_{\tau}(\vecu )}{\nabla\bigl((\tfrac12H_i-G_iI_i)Z_{\tau}(\vecv )\bigr)}_{\mL^2(D)}\nn\\
&\quad+
\inpro{(\tfrac12H_i-G_iI_i)Z_{\tau}(\vecu )}{\nabla Z_{\tau}(\vecv )}_{\mL^2(D)}\nn\\
&\quad+
\inpro{I_i Z_{\tau}(\vecu )}{I_i Z_{\tau}(\vecv )}_{\mL^2(D)}
\bigr|\nn\\
&\leq 
c
\|\vecu \|_{\mL^2(D)}
\bigl(
\|\nabla Z_{\tau}\vecv \|_{\mL^2(D)}
+
\|\vecv \|_{\mL^2(D)}
\bigr),
\end{align*}
and therefore,
\begin{align}\label{eq: Fbound5}
\mE
\int_0^{ s}
\bigl| F_{1,i}(\tau,\vecu ,\vecv )\bigr|\,d\tau
&\leq
c \mE\left[
\|\vecu \|_{\mL^2(D)}
\bigl(
\int_0^{ s}\|\nabla Z_{\tau}\vecv \|_{\mL^2(D)}\,d\tau \bigr)
\right]\nn\\
&\quad+
c s\mE\left[
\|\vecu \|_{\mL^2(D)}
\|\vecv \|_{\mL^2(D)}
\right].
\end{align}
Hence, by using 
H\"older inequality we obtain  from~\eqref{eq: Fbound1}--\eqref{eq: Fbound5} that there holds:
\begin{align}\label{eq: Fbound8}
\mE
|F( s,\vecu ,\vecv )|
&\leq 
c \mE\left[
\|\vecu \|_{\mL^2(D)}
\bigl(
\int_0^{ s}\|\nabla Z_{\tau}\vecv \|_{\mL^2(D)}\,d\tau \bigr)
\right]\nn\\
&\quad+
c( s^{1/2}+ s)\mE\left[
\|\vecu \|_{\mL^2(D)}
\|\vecv \|_{\mL^2(D)}
\right]\nn\\
&\leq 
c \bigl(\mE[
\|\vecu \|_{\mL^2(D)}^2]\bigr)^{1/2}
\bigl(\mE
\bigl[(
\int_0^{ s}\|\nabla Z_{\tau}\vecv \|_{\mL^2(D)}\,d\tau)^2 \bigr]
\bigr)^{1/2}\nn\\
&\quad+
c( s^{1/2}+ s)\bigl(\mE[
\|\vecu \|_{\mL^2(D)}^2]\bigr)^{1/2}
\bigl(\mE[\|\vecv \|_{\mL^2(D)}^2
]\bigr)^{1/2}.
\end{align}
Via the Minkowski inequality and~\eqref{auE:normdeZ}, we observe that
\begin{align}\label{eq: Fbound3}
\bigl(\mE
\bigl[(
\int_0^{ s}\|\nabla Z_{\tau}\vecv \|_{\mL^2(D)}\,d\tau)^2 \bigr]
\bigr)^{1/2}
\leq 
\int_0^{ s}
\bigl(\mE
[\|\nabla Z_{\tau}\vecv \|_{\mL^2(D)}^2]\bigr)^{1/2}\,d\tau
\leq 
c s
\bigl(\mE
[\|\nabla\vecv \|_{\mL^2(D)}^2]\bigr)^{1/2}.
\end{align}
The required result follows from~\eqref{eq: Fbound8} and~\eqref{eq: Fbound3},
which completes the proof of this lemma.
\end{proof}
\section{Equivalence of weak solutions}\label{sec:equ eqn}
In this section we use the process $(Z_t)_{t\geq 0}$ defined in the
preceding section to define a new process $\vecm$ from
$\vecM$.
Let
\begin{equation}\label{equ:vecm}
\vecm(t,\vecx) = Z_t^{-1}\vecM(t,\vecx) \quad\forall t
\ge0, \ a.e. \vecx\in D.
\end{equation}
We will show that this new variable $\vecm$ is differentiable with
respect to $t$.

In the next lemma, we introduce the equation satisfied by $\vecm$ so that $\vecM$ is a solution to~\eqref{E:1.1} in the sense of~\eqref{wE:1.1}.
\begin{lemma}\label{lem:4.2}
If  
 $\vecm(\cdot,\omega)\in H^1(0,T;\mL^2(D))\cap L^2(0,T;\mH^1(D))$, 
for $\mP$-a.s. $\omega\in\Omega$, satisfies
\[
\snorm{\vecm(t,\vecx)}{} = 1
\quad\forall t\ge0, \ a.e.\  \vecx\in D,\, \mP-\text{a.s.},
\]
and for any $\vecpsi\in L^2(0,T;\mW^{1,\infty}(D))$
\begin{align}\label{InE:14}
\inpro{\partial_t\vecm}{\vecpsi}_{\mL^2(D_T)}
&+
\lambda_1\int_0^T
\inpro{Z_s\vecm\times\nabla Z_s\vecm}{\nabla Z_s\vecpsi}_{\mL^2(D)}\ds\nn\\
&+
\lambda_2\int_0^T
\inpro{Z_s\vecm\times\nabla Z_s\vecm}{\nabla Z_s (\vecm\times\vecpsi)}_{\mL^2(D)}\ds
=0,
\quad \mP\text{-a.s.}.
\end{align}
Then $\vecM = Z_t\vecm$ satisfies \eqref{wE:1.1} $\mP$-a.s..
\end{lemma}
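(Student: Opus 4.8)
The plan is to compute the stochastic differential of $\vecM(t)=Z_t\vecm(t)$ and then to read off the drift from equation \eqref{InE:14}. Since $\vecm(\cdot,\omega)\in H^1(0,T;\mL^2(D))$ is absolutely continuous in time it is of finite variation and has trivial quadratic covariation with $Z$; hence the It\^o product rule applied to $\vecM(t)=Z_t\vecm(t)$, together with the It\^o form \eqref{eqz-i} of the equation for $Z_t$, gives
\begin{equation*}
\vecM(t)=\vecM_0+\int_0^t Z_s\,\pt\vecm(s)\ds+\frac12\sum_{i=1}^q\int_0^t G_i^2\vecM(s)\ds+\sum_{i=1}^q\int_0^t\bigl(\vecM(s)\times\vecg_i\bigr)\,dW_i(s),
\end{equation*}
$\mP$-a.s. in $\mL^2(D)$. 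A short computation of the cross-variation, $d[\vecM\times\vecg_i,W_i]=(\vecM\times\vecg_i)\times\vecg_i\dt=G_i^2\vecM\dt$, shows that the It\^o noise term recombines with the $\tfrac12\sum_i G_i^2\vecM$ drift into the Stratonovich integral, so that
\begin{equation*}
\vecM(t)=\vecM_0+\int_0^t Z_s\,\pt\vecm(s)\ds+\sum_{i=1}^q\int_0^t\bigl(\vecM(s)\times\vecg_i\bigr)\circ dW_i(s),\qquad\mP\text{-a.s.}
\end{equation*}
Pairing with a fixed $\vecpsi\in\C_0^\infty(D)$ already reproduces the noise term of \eqref{wE:1.1}, so it remains only to identify $\int_0^t\inpro{Z_s\,\pt\vecm(s)}{\vecpsi}_{\mL^2(D)}\ds$ with the two deterministic terms there.

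To do this I would insert into \eqref{InE:14} the test function $\vecphi(s,\cdot):=\mathbf 1_{[0,t]}(s)\,Z_s^{-1}\vecpsi=\mathbf 1_{[0,t]}(s)\,Z_s^\star\vecpsi$. Under the smoothness assumption on the $\vecg_i$, Lemma \ref{lemnew2} together with the isometry property in Lemma \ref{lem:auxi1} shows that $s\mapsto Z_s^{-1}\vecpsi$ belongs to $L^\infty(0,T;\mW^{1,\infty}(D))$ $\mP$-a.s., so $\vecphi\in L^2(0,T;\mW^{1,\infty}(D))$ is admissible (a routine density argument, exploiting the pathwise validity of \eqref{InE:14}, lets one use this $\omega$-dependent test function). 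Then, using $Z_s\vecm=\vecM$, $Z_s(Z_s^{-1}\vecpsi)=\vecpsi$ (hence $\nabla Z_s(Z_s^{-1}\vecpsi)=\nabla\vecpsi$) and $Z_s(\vecm\times Z_s^{-1}\vecpsi)=Z_s\vecm\times Z_s(Z_s^{-1}\vecpsi)=\vecM\times\vecpsi$ by \eqref{auE:cross}, together with $\inpro{\pt\vecm}{Z_s^{-1}\vecpsi}_{\mL^2(D)}=\inpro{Z_s\,\pt\vecm}{\vecpsi}_{\mL^2(D)}$ (since $Z_s^{-1}=Z_s^\star$ by \eqref{auE:adj}), equation \eqref{InE:14} collapses to
\begin{equation*}
\int_0^t\inpro{Z_s\,\pt\vecm}{\vecpsi}_{\mL^2(D)}\ds=-\lambda_1\int_0^t\inpro{\vecM\times\nabla\vecM}{\nabla\vecpsi}_{\mL^2(D)}\ds-\lambda_2\int_0^t\inpro{\vecM\times\nabla\vecM}{\nabla(\vecM\times\vecpsi)}_{\mL^2(D)}\ds.
\end{equation*}
Substituting this into the identity from the first paragraph yields \eqref{wE:1.1} for every $t\in[0,T]$ and every $\vecpsi\in\C_0^\infty(D)$, $\mP$-a.s.

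The main obstacle is the rigorous execution of the first paragraph: the It\^o product rule must be applied to the $\mL^2(D)$-valued process $Z_t\vecm(t)$ whose factor $\vecm$ is only $H^1$ in time, and one must be sure the spatial-gradient manipulations used in the second paragraph are legitimate. I would handle this by pairing with $\vecpsi$ from the outset, so that $t\mapsto\inpro{\vecM(t)}{\vecpsi}_{\mL^2(D)}=\inpro{\vecm(t)}{Z_t^\star\vecpsi}_{\mL^2(D)}$ becomes a scalar It\^o process to which the ordinary It\^o formula applies directly, and by approximating $\vecm$ in $H^1(0,T;\mL^2(D))$ by processes smooth in time before passing to the limit; the regularity bounds on $Z$ and $\nabla Z$ recalled in Lemmas \ref{lemnew2} and \ref{lem: boundExF} supply the uniform control needed for these limits.
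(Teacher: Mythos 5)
Your proof follows essentially the same route as the paper's: apply the product rule to $\vecM=Z_t\vecm$ to obtain $\vecM(t)=\vecM_0+\int_0^t Z_s(\pt\vecm)\ds+\sum_i\int_0^t(\vecM\times\vecg_i)\circ dW_i(s)$, pair with $\vecpsi$, and substitute the test function $Z_s^{-1}\vecpsi$ into \eqref{InE:14}, using $Z_s^{-1}=Z_s^\star$ from \eqref{auE:adj} and $Z_s(\vecm\times Z_s^{-1}\vecpsi)=\vecM\times\vecpsi$ from \eqref{auE:cross}. The only difference is that you make explicit the It\^o--Stratonovich conversion and the admissibility of the $\omega$-dependent test function, points the paper passes over silently; this is a correct and slightly more careful rendering of the same argument.
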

\begin{proof} 
Using It\^o's formula for $\vecM= Z_t\vecm$,  we deduce
\begin{align*}
\vecM(t)
&=
\vecM(0)
+
\sum_{i=1}^q
\int_0^t Z\vecm\times\vecg_i \circ dW_i(s)
+ \int_0^t Z(\partial_t\vecm) \ds\\
&=
\vecM(0)
+
\sum_{i=1}^q
\int_0^t \vecM\times\vecg_i \circ dW_i(s)
+ \int_0^t Z_s(\partial_t\vecm)\ds.
\end{align*}
Multiplying both sides by a test function
$\vecpsi\in\C_0^{\infty}(D)$ and integrating over $D$ we
obtain
\begin{align}\label{equ:wIto}
\inpro{\vecM(t)}{\vecpsi}_{\mL^2(D)}
&=
\inpro{\vecM(0)}{\vecpsi}_{\mL^2(D)}
+
\sum_{i=1}^q
\int_0^t
\inpro{\vecM\times\vecg_i}{\vecpsi}_{\mL^2(D)}\circ dW_i(s)\nn\\
&\quad+
\int_0^t
\inpro{Z_s(\partial_t\vecm)}{\vecpsi}_{\mL^2(D)}\ds\nn\\
&=
\inpro{\vecM(0)}{\vecpsi}_{\mL^2(D)}
+
\sum_{i=1}^q
\int_0^t
\inpro{\vecM\times\vecg_i}{\vecpsi}_{\mL^2(D)}\circ dW_i(s)\nn\\
&\quad+
\int_0^t
\inpro{\partial_t\vecm}{Z_s^{-1}\vecpsi}_{\mL^2(D)}\ds,
\end{align}
where in the last step we used~\eqref{auE:adj}.
On the other hand, it follows from~\eqref{InE:14} that,
for all $\vecxi\in  L^2(0,t;\mW^{1,\infty}(D))$, there holds:
\begin{align}\label{equ:dm xi}
\int_0^t
\inpro{\partial_t\vecm}{\vecxi}_{\mL^2(D)}\ds
&=
-\lambda_1
\int_0^t
\inpro{Z_s\vecm\times\nabla Z_s\vecm}{\nabla Z_s\vecxi}_{\mL^2(D)}\ds\nn\\
&\quad-\lambda_2
\int_0^t
\inpro{Z_s\vecm\times\nabla Z_s\vecm}{\nabla Z_s(\vecm\times\vecxi)}_{\mL^2(D)}\ds.
\end{align}
Using~\eqref{equ:dm xi} with $\vecxi=Z_s^{-1}\vecpsi$ for the last term on the right hand side of~\eqref{equ:wIto} we deduce  
\begin{align*}
\inpro{\vecM(t)}{\vecpsi}_{\mL^2(D)}
&=
\inpro{\vecM(0)}{\vecpsi}_{\mL^2(D)}
+
\sum_{i=1}^q
\int_0^t
\inpro{\vecM\times\vecg_i}{\vecpsi}_{\mL^2(D)}\circ dW_i(s)\nn\\
&\quad-\lambda_1
\int_0^t
\inpro{\vecM\times\nabla\vecM}{\nabla\vecpsi\big)}_{\mL^2(D)}\ds \\
&\quad
-\lambda_2
\int_0^t
\inpro{\vecM\times\nabla\vecM}{\nabla Z_s(\vecm\times Z_s^{-1}\vecpsi)}_{\mL^2(D)}\ds.
\end{align*}
It follows from~\eqref{auE:cross} that
\begin{align*}
\inpro{\vecM(t)}{\vecpsi}_{\mL^2(D)}
&=
\inpro{\vecM(0)}{\vecpsi}_{\mL^2(D)}
+
\sum_{i=1}^q
\int_0^t
\inpro{\vecM\times\vecg_i}{\vecpsi}_{\mL^2(D)}\circ dW_i(s)\nn\\
&\quad-\lambda_1
\int_0^t
\inpro{\vecM\times\nabla\vecM}{\nabla\vecpsi\big)}_{\mL^2(D)}\ds \\
&\quad
-\lambda_2
\int_0^t
\inpro{\vecM\times\nabla\vecM}{\nabla (\vecM\times\vecpsi)}_{\mL^2(D)}\ds,
\end{align*}
which complete the proof.
\end{proof}
The following lemma shows that the constraint on $|\vecm |$ is inherited by $|\vecM |$.
\begin{lemma}\label{lem:m 1}
The process $\vecM$ satisfies
\[
\snorm{\vecM(t,\vecx)}{} = 1
\quad\forall t\ge0, \ a.e.\, \vecx\in D,\, \mP-\text{a.s.}
\]
if and only if $\vecm$ defined in~\eqref{equ:vecm} satisfies
\[
\snorm{\vecm(t,\vecx)}{} = 1
\quad\forall t\ge0, \ a.e. \vecx\in D,\, \mP-\text{a.s.}.
\]
\end{lemma}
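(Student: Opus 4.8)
The proof is a direct consequence of the isometry property~\eqref{auE:normZ} established in Lemma~\ref{lem:auxi1}. The plan is as follows.

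\textbf{Approach.} Recall from~\eqref{equ:vecm} that $\vecm(t,\vecx) = Z_t^{-1}\vecM(t,\vecx)$, equivalently $\vecM(t,\vecx) = Z_t\vecm(t,\vecx)$ for all $t\ge 0$ and a.e. $\vecx\in D$, $\mP$-a.s. The whole argument rests on the pointwise (in $\vecx$) isometry of the matrix-valued process: by part~(b) of Lemma~\ref{lemnew1}, for every $t\ge 0$ and $x\in\overline D$ we have $|Z_t(x)\vecu| = |\vecu|$ for all $\vecu\in\R^3$, $\mP$-a.s., and $Z_t(x)$ is invertible with $Z_t(x)^{-1} = Z_t(x)^\star$, which also an isometry. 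Consequently $|Z_t(x)^{-1}\vecw| = |\vecw|$ for every $\vecw\in\R^3$ as well.

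\textbf{Key steps.} First, fix a full-measure event $\Omega_0$ (as provided in Lemma~\ref{lem:auxi1}, or the exceptional-null-set union coming from Lemma~\ref{lemnew1}(a)-(b)) on which the solution of~\eqref{eqz} is $t$-continuous and the isometry~\eqref{iso} holds simultaneously for all $t$ in a countable dense set, hence — by continuity — for all $t\ge 0$. Work with a fixed $\omega\in\Omega_0$. Second, suppose $|\vecM(t,\vecx)| = 1$ for all $t\ge 0$, a.e. $\vecx\in D$. For such $(t,\vecx)$ write $\vecm(t,\vecx) = Z_t(\vecx)^{-1}\vecM(t,\vecx)$ and apply the isometry of $Z_t(\vecx)^{-1}$: $|\vecm(t,\vecx)| = |Z_t(\vecx)^{-1}\vecM(t,\vecx)| = |\vecM(t,\vecx)| = 1$. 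Third, conversely, suppose $|\vecm(t,\vecx)| = 1$ for all $t\ge 0$, a.e. $\vecx\in D$; then $|\vecM(t,\vecx)| = |Z_t(\vecx)\vecm(t,\vecx)| = |\vecm(t,\vecx)| = 1$ by the isometry of $Z_t(\vecx)$. Since the argument is symmetric in $\vecM$ and $\vecm$, this establishes the equivalence. Finally, note that the exceptional sets in $\vecx$ can be taken independent of $t$: the set $\{\vecx : |\vecM(t,\vecx)|\ne 1\}$ is null for each $t$, and (as is the case in the construction of $\vecM$, which lies in $C([0,T];\mH^{-1}(D))$ with the saturation constraint read off for each $t$) one argues along a dense set of times and passes to the limit, or simply observes that the statement to be proved is itself phrased "for each $t$, a.e. $\vecx$", so the equivalence holds time-by-time.

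\textbf{Main obstacle.} There is no real obstacle here; the only point requiring a modicum of care is the measurability/null-set bookkeeping — ensuring that the isometry of $Z_t(\cdot)$ holds on a single $\mP$-null-set-complement uniformly in $t$ (handled by $t$-continuity of $Z$) and that the a.e.-in-$\vecx$ identifications are compatible with the a.e.-in-$\vecx$ definition of $\vecm$ in~\eqref{equ:vecm}. Once the isometry is available pointwise in $(t,\vecx,\omega)$ off a null set, the equivalence of the two saturation constraints is immediate.
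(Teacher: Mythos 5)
Your proof is correct and rests on the same fact as the paper's: the pointwise orthogonality of $Z_t$ from Lemma~\ref{lem:auxi1} (you invoke the isometry~\eqref{auE:normZ} directly, while the paper writes the equivalent computation $|\vecm|^2=\inpro{Z_t^{-1}\vecM}{Z_t^{-1}\vecM}=\inpro{\vecM}{Z_tZ_t^{-1}\vecM}=|\vecM|^2$ via the adjoint identity~\eqref{auE:adj}). The extra null-set bookkeeping you supply is sound but not a substantive difference.
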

\begin{proof}
The proof follows by using~\eqref{auE:adj}: 
\begin{align*}
|\vecm|^2=\inpro{\vecm}{\vecm}
=\inpro{Z^{-1}_t\vecM}{Z^{-1}_t\vecM}
=\inpro{\vecM}{Z_tZ^{-1}_t\vecM}
=\inpro{\vecM}{\vecM}=|\vecM|^2.
\end{align*}
\end{proof}
In the next lemma we provide a relationship between equation~\eqref{InE:14} and its Gilbert form.
\begin{lemma}\label{lem:4.1}
Let $\vecm(\cdot,\omega)\in H^1(0,T;\mL^2(D))\cap L^2(0,T;\mH^1(D))$ for $\mP$-a.s. $\omega\in\Omega$ satisfy
\begin{equation}\label{equ:m 1}
|\vecm(t,\vecx)| = 1, \quad t\in(0,T), \ \vecx\in D,
\end{equation}
and
\begin{align}\label{InE:13}
\lambda_1\inpro{\partial_t\vecm}{\vecvarphi}_{\mL^2(D_T)}
+
\lambda_2\inpro{\vecm\times\partial_t\vecm}{\vecvarphi}_{\mL^2(D_T)}
=
\mu\int_0^T \inpro{\nabla Z_s\vecm}{\nabla Z_s(\vecm\times\vecvarphi)}_{\mL^2(D)}\ds,
\end{align}
where $\mu=\lambda_1^2+\lambda_2^2$. Then $\vecm$ satisfies~\eqref{InE:14}.
\end{lemma}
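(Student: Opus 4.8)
The plan is to show that equation~\eqref{InE:13}, the Gilbert form, is algebraically equivalent to equation~\eqref{InE:14} pointwise in time, using the constraint $|\vecm|=1$. The standard deterministic trick is to test~\eqref{InE:13} with a cleverly chosen test function built from $\vecpsi$, or conversely to rewrite the cross products. First I would fix $\vecpsi\in L^2(0,T;\mW^{1,\infty}(D))$ and consider~\eqref{InE:13} with the test function $\vecvarphi = \vecm\times\vecpsi$ (which lies in the right space since $|\vecm|=1$ and $\vecm\in L^2(0,T;\mH^1(D))$, so $\vecm\times\vecpsi\in L^2(0,T;\mH^1(D))\hookrightarrow L^2(0,T;\mL^2(D))$, and the gradient terms are controlled). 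The key pointwise identities are the ones used everywhere in LLG analysis: from $|\vecm|=1$ one has $\inpro{\vecm}{\partial_t\vecm}=0$ a.e., and the double-cross-product identity~\eqref{equ:elemabc} gives $\vecm\times(\vecm\times\vecw) = \inpro{\vecm}{\vecw}\vecm - \vecw$ for any $\vecw$.

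The core computation goes as follows. Substituting $\vecvarphi=\vecm\times\vecpsi$ into the left side of~\eqref{InE:13}: the first term becomes $\lambda_1\inpro{\partial_t\vecm}{\vecm\times\vecpsi}_{\mL^2(D_T)} = -\lambda_1\inpro{\vecm\times\partial_t\vecm}{\vecpsi}_{\mL^2(D_T)}$ by the cyclic property of the scalar triple product; the second term becomes $\lambda_2\inpro{\vecm\times\partial_t\vecm}{\vecm\times\vecpsi}_{\mL^2(D_T)}$, and using $\inpro{\veca\times\vecb}{\veca\times\vecc} = |\veca|^2\inpro{\vecb}{\vecc} - \inpro{\veca}{\vecb}\inpro{\veca}{\vecc}$ together with $|\vecm|=1$ and $\inpro{\vecm}{\partial_t\vecm}=0$, this equals $\lambda_2\inpro{\partial_t\vecm}{\vecpsi}_{\mL^2(D_T)}$. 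So the left-hand side collapses to $\lambda_2\inpro{\partial_t\vecm}{\vecpsi}_{\mL^2(D_T)} - \lambda_1\inpro{\vecm\times\partial_t\vecm}{\vecpsi}_{\mL^2(D_T)}$. Combining this with the original~\eqref{InE:13} (tested with $\vecpsi$ directly), one gets a $2\times2$ linear system in the two unknowns $\inpro{\partial_t\vecm}{\vecpsi}$ and $\inpro{\vecm\times\partial_t\vecm}{\vecpsi}$ whose determinant is $\lambda_1^2+\lambda_2^2=\mu\neq0$; solving it expresses $\inpro{\partial_t\vecm}{\vecpsi}_{\mL^2(D_T)}$ purely in terms of the right-hand-side quantities. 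It remains to identify the resulting right-hand side with $-\lambda_1\int_0^T\inpro{Z_s\vecm\times\nabla Z_s\vecm}{\nabla Z_s\vecpsi}\ds - \lambda_2\int_0^T\inpro{Z_s\vecm\times\nabla Z_s\vecm}{\nabla Z_s(\vecm\times\vecpsi)}\ds$, which follows from~\eqref{auE:cross} (so $Z_s(\vecm\times\vecpsi)=Z_s\vecm\times Z_s\vecpsi$, etc.) and the elementary vector identities applied inside the $\mL^2$ inner product, together with a density argument to pass from $\vecpsi\in\C_0^\infty$ to $\vecpsi\in L^2(0,T;\mW^{1,\infty}(D))$.

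The main obstacle I anticipate is bookkeeping with the $Z_s$ operators in the gradient terms: unlike the deterministic case, $\nabla Z_s(\vecm\times\vecpsi)$ does not simply split, so one must carefully use Lemma~\ref{lem: gradZ} / Lemma~\ref{lemnew2} and~\eqref{auE:cross} to rewrite $\inpro{\nabla Z_s\vecm}{\nabla Z_s(\vecm\times\vecvarphi)}$ with $\vecvarphi=\vecm\times\vecpsi$ in a form matching the terms of~\eqref{InE:14}. Concretely, with $\vecvarphi=\vecm\times\vecpsi$ one needs $\nabla Z_s(\vecm\times(\vecm\times\vecpsi)) = \nabla Z_s(\inpro{\vecm}{\vecpsi}\vecm - \vecpsi)$, and after applying $Z_s$ as an $\mL^2$-bounded operator and using~\eqref{auE:cross} repeatedly one must reorganize $Z_s\vecm\times\nabla Z_s\vecm$ against $\nabla Z_s\vecpsi$ versus $\nabla Z_s(\vecm\times\vecpsi)$. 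This is where the algebra is heaviest; I expect it to parallel the classical equivalence of the Landau--Lifshitz and Gilbert forms, just with $\nabla(\cdot)$ replaced throughout by $\nabla Z_s(\cdot)$ and cross products interpreted via~\eqref{auE:cross}. The verification that every test function used ($\vecm\times\vecpsi$, $\vecm\times Z_s^{-1}\vecpsi$, etc.) lies in the admissible space $L^2(0,T;\mW^{1,\infty}(D))$ — using $|\vecm|=1$, $\vecm\in L^2(0,T;\mH^1(D))$, and boundedness of $Z_s$ and $\nabla Z_s$ from Lemma~\ref{lem: boundExF} — is routine but should be stated.
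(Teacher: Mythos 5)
Your proposal is correct, and it reaches \eqref{InE:14} by a route that differs from the paper's in how the Gilbert-form operator is inverted. The paper fixes $\vecpsi$ and invokes Lemma~\ref{lem:4.0} to produce $\vecvarphi$ with $\lambda_1\vecvarphi+\lambda_2\vecvarphi\times\vecm=\vecpsi$, so that the left-hand side of \eqref{InE:13} collapses at once to $\inpro{\partial_t\vecm}{\vecpsi}$, and then adds the auxiliary identities \eqref{InE:16}--\eqref{InE:17} to reassemble the right-hand side. You instead test \eqref{InE:13} with $\vecpsi$ and with $\vecm\times\vecpsi$ and solve the resulting $2\times2$ system with determinant $\pm\mu\neq0$; since the paper's $\vecvarphi$ is itself a linear combination of $\vecpsi$, $\vecm\times\vecpsi$ and $\inpro{\vecm}{\vecpsi}\vecm$ (the last contributing nothing to the time-derivative terms because $\inpro{\vecm}{\partial_t\vecm}=0$), the two arguments are linear-algebraically equivalent, but yours has the advantage of bypassing the appendix solvability lemma entirely. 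Your system gives $\inpro{\partial_t\vecm}{\vecpsi}=\lambda_1A+\lambda_2B$ with $A=\int_0^T\inpro{\nabla Z_s\vecm}{\nabla Z_s(\vecm\times\vecpsi)}\ds$ and $B=\int_0^T\inpro{\nabla Z_s\vecm}{\nabla Z_s(\vecm\times(\vecm\times\vecpsi))}\ds$, and the identification with \eqref{InE:14} is in fact lighter than you anticipate: you do not need to expand the inner double cross product via \eqref{equ:elemabc} at all. Applying \eqref{auE:cross} once and the product rule, $\nabla Z_s(\vecm\times\vecw)=\nabla Z_s\vecm\times Z_s\vecw+Z_s\vecm\times\nabla Z_s\vecw$; pairing with $\nabla Z_s\vecm$ kills the first term (degenerate triple product) and turns the second into $-\inpro{Z_s\vecm\times\nabla Z_s\vecm}{\nabla Z_s\vecw}$, which with $\vecw=\vecpsi$ and $\vecw=\vecm\times\vecpsi$ yields exactly the two terms of \eqref{InE:14}; no density argument is needed. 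The only residual point, which your proof shares with the paper's, is the tacit assumption that \eqref{InE:13} may be tested against $\vecm\times\vecpsi$ (respectively the paper's $\vecvarphi$), whose gradient is a priori only $L^1$ in time when $\vecpsi\in L^2(0,T;\mW^{1,\infty}(D))$ and $\vecm\in L^2(0,T;\mH^1(D))$; this is harmless but worth stating, as you do.
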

\begin{proof}
For each $\vecpsi\in L^2(0,T;\mW^{1,\infty}(D))$, using Lemma~\ref{lem:4.0}
in the Appendix, there exists
$\vecvarphi\in L^2(0,T;\mH^1(D))$ such that
\begin{equation}\label{Equ:phi}
\lambda_1{\vecvarphi}
+
\lambda_2{\vecvarphi}\times\vecm
=\vecpsi.
\end{equation}
We can write~\eqref{InE:13} as
\begin{align}\label{InE:15}
\quad\inpro{\partial_t\vecm}
{\lambda_1{\vecvarphi}+\lambda_2{\vecvarphi}\times\vecm}_{\mL^2(D_T)}
&+
\lambda_1\int_0^T
\inpro{Z_s\vecm\times\nabla Z_s\vecm}
{\nabla Z_s(\lambda_1{\vecvarphi})}_{\mL^2(D)}\ds\nn\\
&+\lambda_2\int_0^T
\inpro{\nabla Z_s\vecm}
{\nabla Z_s(\lambda_2{\vecvarphi}\times\vecm)}_{\mL^2(D)}\ds
=0.
\end{align}
From ~\eqref{equ:m 1} and~\eqref{auE:normZ}, we obtain that 
\begin{equation}\label{equ:Zm1}
|Z_t\vecm(t,\vecx)|=1,\quad \forall t\in (0,T)\,\text{and }\vecx\in D.
\end{equation}
On the other hand, by using~\eqref{equ:Zm1},~\eqref{equ:elemabc} and a standard identity 
\begin{equation}\label{equ:elemabc2}
\inpro{\veca}{\vecb\times\vecc}
=
\inpro{\vecb}{\vecc\times\veca}
=
\inpro{\vecc}{\veca\times\vecb},\quad\text{for all $\veca,\vecb,\vecc\in\R^3$},
\end{equation}
we obtain
\begin{align}\label{InE:16}
\lambda_1\int_0^T
&\inpro{Z_s\vecm\times\nabla Z_s\vecm}
{\nabla Z_s(\lambda_2{\vecvarphi}\times\vecm)}_{\mL^2(D)}\ds
+\lambda_2\int_0^T
\inpro{\nabla Z_s\vecm}
{\nabla Z_s(\lambda_1{\vecvarphi})}_{\mL^2(D)}\ds\nn\\
&-\lambda_2\int_0^T
\inpro{|\nabla Z_s\vecm|^2 Z_s\vecm}
{Z_s(\lambda_1{\vecvarphi})}_{\mL^2(D)}\ds=0.
\end{align}
Moreover, we have
\begin{equation}\label{InE:17}
-\lambda_2\int_0^T
\inpro{|\nabla Z_s\vecm|^2 Z_s\vecm}
{\lambda_2 Z_s\vecvarphi\times Z_s\vecm}_{\mL^2(D)}\ds=0.
\end{equation}
Summing \eqref{InE:15},~\eqref{InE:16} and~\eqref{InE:17} gives
\begin{align*}
\inpro{\partial_t\vecm}
{\lambda_1{\vecvarphi}+\lambda_2{\vecvarphi}\times\vecm}_{\mL^2(D_T)}
&+
\lambda_1\int_0^T
\inpro{Z_s\vecm\times\nabla Z_s\vecm}
{\nabla Z_s(\lambda_1{\vecvarphi}+\lambda_2{\vecvarphi}\times\vecm)}_{\mL^2(D)}\ds\nn\\
&+
\lambda_2\int_0^T
\inpro{\nabla Z_s\vecm}
{\nabla Z_s(\lambda_1{\vecvarphi}+\lambda_2{\vecvarphi}\times\vecm)}_{\mL^2(D)}\ds\nn\\
&-
\lambda_2\int_0^T
\inpro{|\nabla Z_s\vecm|^2 Z_s\vecm}
{Z_s(\lambda_1{\vecvarphi}+\lambda_2{\vecvarphi}\times\vecm)}_{\mL^2(D)}\ds
=0
\end{align*}
The desired equation \eqref{InE:14} follows by noting
\eqref{Equ:phi} and using~\eqref{equ:elemabc},~\eqref{equ:elemabc2} and~\eqref{equ:Zm1}.
\end{proof}

\begin{remark}\label{rem:LLL}
By using~\eqref{equ:elemabc2} and~\eqref{equ:m 1} we can rewrite~\eqref{InE:13}
as
\begin{align}\label{E:1.3a}
\lambda_1\inpro{\vecm\times\partial_t\vecm}{\vecw}_{\mL^2(D_T)}
-
\lambda_2\inpro{\partial_t\vecm}{\vecw}_{\mL^2(D_T)} 
=
\mu\int_0^T \inpro{\nabla Z_s\vecm}{\nabla Z_s\vecw}_{\mL^2(D)}\ds,
\end{align}
or equivalently, thanks to Lemma~\ref{lem: gradZ2},
\begin{align}\label{E:1.3ab}
\lambda_1\inpro{\vecm\times\partial_t\vecm}{\vecw}_{\mL^2(D_T)}
-
\lambda_2\inpro{\partial_t\vecm}{\vecw}_{\mL^2(D_T)} 
=
&\mu \inpro{\nabla\vecm}{\nabla\vecw}_{\mL^2(D_T)}\nn\\
&+
\mu \int_0^T F(t,\vecm(t,\cdot),\vecw(t,\cdot))\dt,
\end{align}
where $\vecw=\vecm\times\vecvarphi$ for
$\vecvarphi\in L^2(0,T;\mH^1(D))$.  
We note in particular that
$\vecw\cdot\vecm=0$. This property will be exploited later
in the design of the finite element scheme.
\end{remark}

We state the following lemma as a consequence of Lemmas~\ref{lem:4.1},~\ref{lem:m 1} and~\ref{lem:4.2}.
\begin{lemma}\label{lem:equi}
Let $\vecm(\cdot,\omega)\in H^1(0,T;\mL^2(D))\cap L^2(0,T;\mH^1(D))$ for $\mP$-a.s. $\omega\in\Omega$. 
If $\vecm$ is a solution of~\eqref{equ:m 1}--~\eqref{InE:13}, 
 then $\vecM = Z_t \vecm$ is a weak martingale solution of~\eqref{E:1.1} in the sense of Definition~\ref{def:wea sol}.
\end{lemma}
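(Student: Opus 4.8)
The plan is to assemble Lemma~\ref{lem:equi} directly from the chain of implications already established, verifying that the hypotheses of each intermediate lemma are met in order. First I would observe that the statement is essentially a bookkeeping exercise: we are handed $\vecm(\cdot,\omega)\in H^1(0,T;\mL^2(D))\cap L^2(0,T;\mH^1(D))$ for $\mP$-a.s.\ $\omega$ satisfying the constraint $|\vecm(t,\vecx)|=1$ (equation~\eqref{equ:m 1}) together with the Gilbert-form identity~\eqref{InE:13} for all test functions $\vecvarphi\in L^2(0,T;\mH^1(D))$, and we must produce the full list of properties (1)--(4) in Definition~\ref{def:wea sol} for $\vecM=Z_t\vecm$.

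The core logical steps are: (i) apply Lemma~\ref{lem:4.1}, whose hypotheses are exactly~\eqref{equ:m 1} and~\eqref{InE:13}, to conclude that $\vecm$ satisfies the reformulated weak equation~\eqref{InE:14}; (ii) apply Lemma~\ref{lem:m 1} to transfer the pointwise constraint, so that $|\vecM(t,\vecx)|=1$ for all $t$, a.e.\ $\vecx$, $\mP$-a.s., which gives property (3) of the definition and also, via~\eqref{auE:normZ}, the saturation of $Z_t\vecm$; (iii) apply Lemma~\ref{lem:4.2}, whose hypotheses are precisely the regularity of $\vecm$, the constraint $|\vecm|=1$, and equation~\eqref{InE:14}, to obtain that $\vecM=Z_t\vecm$ satisfies~\eqref{wE:1.1} $\mP$-a.s., which is property (4). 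It then remains to check the remaining structural requirements: progressive measurability of $\vecM$ (inherited from that of $\vecm$ together with the adaptedness and continuity of the operator process $Z_t$ established in Lemma~\ref{lem:auxi1}), the path continuity $\vecM(\cdot,\omega)\in C([0,T];\mH^{-1}(D))$ (which follows since $\vecm$ is continuous in $\mL^2(D)$ and $Z_t$ is a continuous family of bounded operators with $|Z_t(\vecu)|_{\mL^2(D)}=|\vecu|_{\mL^2(D)}$), and the energy bound $\mE(\esssup_t\|\nabla\vecM(t)\|^2_{\mL^2(D)})<\infty$, which I would deduce from $\vecM=Z_t\vecm$, the product/Leibniz structure $\nabla Z_t(\vecm)=\xi_t(\vecm)$ of Lemma~\ref{lem: gradZ}, and the bound~\eqref{auE:normdeZ} of Lemma~\ref{lem: boundExF} applied with $\vecu=\vecm$, using $\|\vecm\|_{\mH^1(D)}^2\le C(1+\|\nabla\vecm\|_{\mL^2(D)}^2)$ from the constraint $|\vecm|=1$.

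The only genuinely delicate point — and the place I would slow down — is the handling of the Stratonovich integral in~\eqref{wE:1.1} and the measurability/filtration compatibility: since the statement delivers a \emph{pathwise} (i.e.\ $\omega$-by-$\omega$) solution $\vecm$ of a deterministic-looking PDE with random coefficients $Z_t(\omega)$, one must confirm that when we undo the transformation the resulting object $\vecM$ is adapted to the \emph{same} filtration carrying $W$, and that the Stratonovich correction produced by It\^o's formula for $Z_t\vecm$ in the proof of Lemma~\ref{lem:4.2} is exactly the one appearing in~\eqref{wE:1.1}. Fortunately, this is precisely what the proof of Lemma~\ref{lem:4.2} already verifies, so in the present lemma I would simply cite it. Hence the proof is short: invoke Lemma~\ref{lem:4.1} to get~\eqref{InE:14}, invoke Lemma~\ref{lem:m 1} for the constraint, invoke Lemma~\ref{lem:4.2} for~\eqref{wE:1.1}, and then record that properties (1)--(2) and the measurability requirements follow from the regularity of $\vecm$ together with Lemmas~\ref{lem:auxi1}, \ref{lem: gradZ} and~\ref{lem: boundExF}. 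The main obstacle, such as it is, is purely organisational: making sure the hypothesis set handed to us ($H^1\cap L^2$ regularity, constraint, Gilbert form) is literally the union of the hypothesis sets of the three cited lemmas, which it is by construction.
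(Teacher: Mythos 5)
Your proposal is correct and follows essentially the same route as the paper: the paper's proof is a one-line citation of Lemmas~\ref{lem:4.2}, \ref{lem:m 1} and~\ref{lem:4.1} together with the imbedding $H^1(0,T;\mL^2(D))\hookrightarrow C([0,T];\mH^{-1}(D))$ for property (1), which is exactly the chain you assemble. Your additional care with progressive measurability and the energy bound (2) via Lemma~\ref{lem: boundExF} only makes explicit what the paper leaves implicit.
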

\begin{proof}
By using Lemmas~\ref{lem:4.2},~\ref{lem:m 1} and~\ref{lem:4.1} together with the 
imbedding $H^1(0,T;\mL^2(D))\hookrightarrow C(0,T;\mH^{-1}(D)$, we deduce that $\vecM$  satisfies 
$(1)$, $(2)$, $(3)$, $(4)$ in Definition~\ref{def:wea sol}, which completes the proof.
\end{proof}

Thanks to the above lemma, we now can now restrict our attention to solving equation~\eqref{InE:13} rather than ~\eqref{wE:1.1}.
\section{The finite element scheme}\label{sec:fin ele}
In this section we design a finite element scheme to find
approximate solutions to~\eqref{InE:13}. In the next section, we
prove that the
finite element solutions converge to a solution
of~\eqref{InE:13}. Then, thanks to Lemma~\ref{lem:equi},
we obtain a weak solution of~\eqref{wE:1.1}.

Let $\mT_h$ be a regular tetrahedrization of the domain
$D$ into tetrahedra of maximal mesh-size $h$.
We denote by $\cN_h := \{\vecx_1,\ldots,\vecx_N\}$ the set
of vertices and
introduce the finite-element space
$\mV_h\subset\mH^1(D)$, which is the space of all
continuous piecewise linear functions on $\mT_h$. A
basis for $\mV_h$ can be chosen to be $\{\phi_n\vecxi_1,\phi_n\vecxi_2,\phi_n\vecxi_3\}_{1\leq
n\leq N}$, where
 $\{\vecxi_i\}_{i=1,\cdots,3}$ is the canonical basis for $\R^3$ and $\phi_n(\vecx_m)=\delta_{n,m}.$  Here $\delta_{n,m}$
denotes
the Kronecker delta symbol.
The interpolation operator from
$\C^0(D)$ onto  $\mV_h$, denoted by $I_{\mV_h}$, is defined by
\[
I_{\mV_h}(\vecv)=\sum_{n=1}^N \vecv(\vecx_n)\phi_n(\vecx)
\quad\forall \vecv\in \mathbb C^0(D,\mathbb R^3) .
\]

Before introducing the finite element scheme,
 we state the following result proved by
Bartels~\cite{Bart05}, which will be used in the subsequent analysis.
\begin{lemma}\label{lem:bar}
Assume that
\begin{equation}\label{E:CondTe}
\int_{D} \nabla\phi_i\cdot\nabla\phi_j\dvx \leq 0
\quad\text{for all}\quad i,j \in \{1,2,\cdots,J\}\text{ and
} i\not= j .
\end{equation}
Then for all $\vecu\in\mV_h$ satisfying
$|\vecu(\vecx_l)|\geq 1$, $ l=1,2,\cdots,J$, there holds
\begin{equation}\label{E:InE}
\int_{D}\left|\nabla
I_{\mV_h}\left(\frac{\vecu}{|\vecu|}\right)\right|^2\dvx
\leq
\int_{D}|\nabla\vecu|^2\dvx.
\end{equation}
\end{lemma}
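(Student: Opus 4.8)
The plan is to reduce the statement to a pointwise (or rather elementwise) comparison and then invoke the combinatorial structure encoded in the hypothesis \eqref{E:CondTe}. Write $I_{\mV_h}(\vecu/|\vecu|) = \sum_{n=1}^N \frac{\vecu(\vecx_n)}{|\vecu(\vecx_n)|}\phi_n$ and $\vecu = \sum_{n=1}^N \vecu(\vecx_n)\phi_n$; set $\vecu_n := \vecu(\vecx_n)$ and $\widehat{\vecu}_n := \vecu_n/|\vecu_n|$ (which is well-defined since $|\vecu_n|\ge 1>0$). Expanding both Dirichlet energies in this nodal basis gives
\[
\int_D |\nabla I_{\mV_h}(\vecu/|\vecu|)|^2\dvx
= \sum_{i,j} \inpro{\widehat{\vecu}_i}{\widehat{\vecu}_j} \, k_{ij},
\qquad
\int_D |\nabla \vecu|^2\dvx
= \sum_{i,j} \inpro{\vecu_i}{\vecu_j}\, k_{ij},
\]
where $k_{ij} := \int_D \nabla\phi_i\cdot\nabla\phi_j\dvx$ are the stiffness-matrix entries. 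So the claim becomes the purely algebraic inequality $\sum_{i,j} k_{ij}\bigl(\inpro{\vecu_i}{\vecu_j} - \inpro{\widehat{\vecu}_i}{\widehat{\vecu}_j}\bigr)\ge 0$.

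Next I would exploit the standard identity for the stiffness matrix of piecewise linear elements: since constants lie in $\mV_h$, each row sums to zero, $\sum_j k_{ij}=0$. This lets me rewrite any quadratic form $\sum_{i,j}k_{ij}\inpro{\veca_i}{\veca_j}$ in the ``symmetric difference'' form $-\tfrac12\sum_{i,j}k_{ij}|\veca_i-\veca_j|^2$. Applying this to $\veca_i = \vecu_i$ and to $\veca_i=\widehat{\vecu}_i$, the desired inequality is equivalent to
\[
\sum_{i\ne j} (-k_{ij})\Bigl(|\vecu_i-\vecu_j|^2 - |\widehat{\vecu}_i - \widehat{\vecu}_j|^2\Bigr)\ge 0.
\]
By hypothesis \eqref{E:CondTe} the weights $-k_{ij}\ge 0$ for $i\ne j$, so it suffices to prove the elementwise bound $|\widehat{\vecu}_i - \widehat{\vecu}_j| \le |\vecu_i - \vecu_j|$ for each pair $i,j$; that is, radial projection onto the unit sphere is $1$-Lipschitz on the set $\{|\vecx|\ge 1\}$. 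This last fact is an elementary estimate: for $|\veca|,|\vecb|\ge 1$ one checks $\bigl|\tfrac{\veca}{|\veca|}-\tfrac{\vecb}{|\vecb|}\bigr|^2 \le |\veca-\vecb|^2$ by expanding both sides and using $\inpro{\veca}{\vecb}\le |\veca||\vecb|$ together with $|\veca||\vecb|\ge 1$; equivalently one invokes convexity/monotonicity of the nearest-point projection onto the convex set $\{|\vecx|\le R\}$ combined with the fact that the map $\vecx\mapsto \vecx/|\vecx|$ on $|\vecx|\ge 1$ is the composition of an inversion and is $1$-Lipschitz there.

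The main obstacle, such as it is, is the bookkeeping in the first step: one must be careful that the energies genuinely diagonalize against the \emph{same} stiffness entries $k_{ij}$, which works precisely because $I_{\mV_h}(\vecu/|\vecu|)$ and $\vecu$ share the same nodal basis $\{\phi_n\}$ and differ only in their (vector-valued) coefficients. After that the proof is essentially the two ingredients: the zero-row-sum rewriting of the Dirichlet form, which converts the sign hypothesis on off-diagonal stiffness entries into a genuinely useful nonnegativity, and the $1$-Lipschitz property of radial projection outside the unit ball. No compactness or regularity input is needed; it is a finite-dimensional inequality. I would present it in exactly that order: expand in the nodal basis, apply the zero-row-sum identity to both terms, reduce to the termwise inequality using $-k_{ij}\ge 0$, and finish with the elementary spherical-projection estimate.
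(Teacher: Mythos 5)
Your argument is correct and is essentially the standard proof from Bartels' paper \cite{Bart05}, which the authors cite rather than reprove: expand both Dirichlet energies in the nodal basis, use the zero-row-sum property $\sum_j k_{ij}=0$ (from $\sum_j\phi_j\equiv 1$) to pass to the form $-\tfrac12\sum_{i\ne j}k_{ij}|\veca_i-\veca_j|^2$, invoke $-k_{ij}\ge 0$, and finish with the pairwise estimate $\bigl|\veca/|\veca|-\vecb/|\vecb|\bigr|\le|\veca-\vecb|$ for $|\veca|,|\vecb|\ge 1$. All steps check out, including the final elementary inequality (it reduces to $a^2+b^2-2\ge 2c(ab-1)/(ab)$ with $c\le ab$), so nothing is missing.
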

\noindent
When $d=2$, we note that condition~\eqref{E:CondTe} holds for Delaunay
triangulation. 
Roughly speaking, a Delaunay triangulation is one in which no vertex is contained inside the perimeter of any triangle. When $d=3$, condition~\eqref{E:CondTe} holds if all dihedral angles of
the tetrahedra in $\mT_h|_{\mL^2(D)}$ are less than or equal
to $\pi/2$; see~\cite{Bart05}.
In what follows we assume that~\eqref{E:CondTe} holds.

To discretize the equation~\eqref{InE:13}, we fix a positive integer $J$, choose the time step
$k$ to be $k=T/J$ and define $t_j=jk$, $j=0,\cdots,J$. For
$j=1,2,\ldots,J$, the solution $\vecm (t_j,\cdot)$
is approximated by $\vecm^{(j)}_h\in\mV_h$, which is
computed as follows.

Since
\[
\vecm_t(t_j,\cdot)
\approx
\frac{\vecm(t_{j+1},\cdot)-\vecm(t_j,\cdot)}{k}
\approx
\frac{\vecm_h^{(j+1)}-\vecm_h^{(j)}}{k},
\]
we can define $\vecm_h^{(j+1)}$ from $\vecm_h^{(j)}$ by
\begin{equation}\label{equ:mjp1}
\vecm_h^{(j+1)}
=
\vecm_h^{(j)} + k \vecv_h^{(j)},
\end{equation}
where $\vecv_h^{(j)}$ is an approximation of
$\vecm_t(t_j,\cdot)$. Hence, it suffices to propose a scheme
to compute $\vecv_h^{(j)}$.

Motivated by the property $\partial_t\vecm\cdot\vecm=0$, 
we
will find $\vecv_h^{(j)}$ in the space $\mW^{(j)}_h$ defined
by
\begin{equation}\label{equ:Whj}
 \mW_h^{(j)}
 :=
 \left\{\vecw\in \mV_h \mid
 \vecw(\vecx_n)\cdot\vecm_h^{(j)}(\vecx_n)=0,
 \ n = 1,\ldots,N \right\}.
\end{equation}
Given $\vecm_h^{(j)}\in\mV_h$,
we use~\eqref{E:1.3ab} to define $\vecv_h^{(j)}$ instead
of using~\eqref{InE:13} so that the same test and trial
functions can be used (see Remark~\ref{rem:LLL}).
Hence, we define by $\vecv_h^{(j)}\in\mW_h^{(j)}$ satisfying the following equation 
\begin{align}\label{E:1.5}
\lambda_1
\inpro{\vecm_h^{(j)}\times\vecv_h^{(j)}}{\vecw_h^{(j)}}_{\mL^2(D)}
-
\lambda_2
\inpro{\vecv_h^{(j)}}{\vecw_h^{(j)}}_{\mL^2(D)}
&=
\mu
\inpro{\nabla(\vecm_h^{(j)}+k\theta\vecv_h^{(j)})}{\nabla \vecw_h^{(j)}}_{\mL^2(D)}\nn\\
&\quad+
\mu F(t_j,\vecm_h^{(j)},\vecw_h^{(j)})\quad \mP\text{-a.s.}.
\end{align}

We summarise the algorithm as follows.
\bigskip
\begin{algorithm}\label{Algo:1}
\mbox{}
\begin{description}
\item[Step 1]
Set $j=0$.
Choose $\vecm^{(0)}_h=I_{\mV_h}\vecm_0$.
\item[Step 2]
Find $\vecv_h^{(j)}\in \mW_h^{(j)}$
satisfying~\eqref{E:1.5}.\label{A:2}
\item[Step 3] \label{A:4}
Define
\begin{equation*}
\vecm_h^{(j+1)}(\vecx)
:=
\sum_{n=1}^N
\frac{\vecm_h^{(j)}(\vecx_n)+k\vecv_h^{(j)}(\vecx_n)}
{\left|\vecm_h^{(j)}(\vecx_n)+k\vecv_h^{(j)}(\vecx_n)\right|}
\phi_n(\vecx).
\end{equation*}
\item[Step 4]
Set $j=j+1$, and return to Step 2 if $j<J$. Stop if
$j=J$.
\end{description}
\end{algorithm}
Since $\left|\vecm_h^{(0)}(x_n)\right|=1$ and
$\vecv_h^{(j)}(x_n)\cdot\vecm_h^{(j)}(x_n)=0$ for all
$n=1,\ldots,N$ and $j=0,\ldots,J$, we obtain (by induction)
\begin{equation}\label{equ:mhj 1}
\left |\vecm_h^{(j)}(x_n)+k\vecv_h^{(j)}(x_n)\right| \ge 1
\quad\text{and}\quad
\left |\vecm_h^{(j)}(x_n)\right |=1,
\quad j = 0,\ldots,J.
\end{equation}
In particular, ~\eqref{equ:mhj 1} shows that
the algorithm is well defined.

We finish this section by proving the following three lemmas
concerning some properties of
$\vecm_h^{(j)}$ and $R_{h,k}$.
\begin{lemma}\label{lem:mhj}
For any $j=0,\ldots,J$,
\[
\norm{\vecm_h^{(j)}}{\mL^{\infty}(D)} \le 1
\quad\text{and}\quad
\norm{\vecm_h^{(j)}}{\mL^2(D)} \le |D|,
\]
where $|D|$ denotes the measure of $D$.
\end{lemma}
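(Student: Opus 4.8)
The plan is to reduce both estimates to the pointwise bound $|\vecm_h^{(j)}(\vecx)|\le 1$ for all $\vecx\in\overline{D}$, after which the $\mL^{2}$-estimate is obtained simply by integrating over $D$.

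First I would recall that, by~\eqref{equ:mhj 1}, the nodal values satisfy $|\vecm_h^{(j)}(\vecx_n)|=1$ for every vertex $\vecx_n\in\cN_h$ and every $j=0,\ldots,J$. Next, since $\vecm_h^{(j)}\in\mV_h$ is continuous and affine on each tetrahedron $K\in\mT_h$, any point $\vecx\in K$ admits a barycentric representation $\vecx=\sum_{l=0}^{3}\lambda_l\vecx_{n_l}$ in terms of the four vertices $\vecx_{n_0},\ldots,\vecx_{n_3}$ of $K$, with $\lambda_l\ge 0$ and $\sum_{l}\lambda_l=1$; affinity of $\vecm_h^{(j)}$ on $K$ then gives $\vecm_h^{(j)}(\vecx)=\sum_{l}\lambda_l\,\vecm_h^{(j)}(\vecx_{n_l})$. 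Applying the triangle inequality (convexity of the Euclidean norm on $\R^3$) together with the nodal bound yields
\[
|\vecm_h^{(j)}(\vecx)|\le\sum_{l=0}^{3}\lambda_l\,|\vecm_h^{(j)}(\vecx_{n_l})|=\sum_{l=0}^{3}\lambda_l=1 .
\]
As $K$ and $\vecx\in K$ were arbitrary and $\overline{D}=\bigcup_{K\in\mT_h}K$, this gives $\norm{\vecm_h^{(j)}}{\mL^{\infty}(D)}\le 1$.

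Finally, the $\mL^{2}$-bound follows at once from the $\mL^{\infty}$-bound: $\norm{\vecm_h^{(j)}}{\mL^2(D)}^2=\int_D|\vecm_h^{(j)}(\vecx)|^2\dvx\le\int_D 1\dvx=|D|$, hence $\norm{\vecm_h^{(j)}}{\mL^2(D)}\le|D|^{1/2}$, which is in turn dominated by $|D|$ (and is in any case all that is used in the sequel). The only step that requires any care is the barycentric-coordinate argument promoting the nodal bound to a genuine pointwise bound on $D$; everything else is a one-line computation, so I do not anticipate a real obstacle here.
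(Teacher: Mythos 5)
Your proof is correct and follows essentially the same route as the paper: the paper simply cites the nodal identity \eqref{equ:mhj 1} for the $\mL^\infty$ bound (your barycentric-coordinate argument is the implicit justification) and integrates for the $\mL^2$ bound. One small caveat: the honest conclusion of the integration is $\norm{\vecm_h^{(j)}}{\mL^2(D)}\le|D|^{1/2}$, and your remark that this is ``dominated by $|D|$'' only holds when $|D|\ge 1$ — but this imprecision is already present in the paper's own statement and is immaterial, since only boundedness by a fixed constant is used later.
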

\begin{proof}
The first inequality follows from~\eqref{equ:mhj 1} and the
second can be obtained by integrating $\vecm_h^{(j)}(\vecx)$ over $D$.
\end{proof}
\begin{lemma}\label{lem:3.2}
There exist a deterministic constant $c$
depending on $\vecm_0$, $\{\vecg_i\}_{i=1}^q$, $\lambda_1$ and $\lambda_2$
such that  for any $\theta\in[1/2,1]$ and  for $j=1,\ldots,J$ there holds
\begin{align*}
\mE
\left \| \nabla \vecm_h^{(j)} \right \| _{\mL^2(D)}^2
+
k\sum_{i=1}^{j-1}
\mu^{-1}\lambda_2\mE\left \| \vecv_h^{(i)}\right\|_{\mL^2(D)}^2
+
k^2(2\theta-1)\sum_{i=1}^{j-1}
\mE\left \| \nabla\vecv_h^{(i)}\right\|_{\mL^2(D)}^2
\leq 
c.
\end{align*}
\end{lemma}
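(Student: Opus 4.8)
The plan is to derive an energy inequality by testing the discrete equation~\eqref{E:1.5} with the natural choice $\vecw_h^{(j)} = \vecv_h^{(j)}$, and then summing over $j$. First I would substitute $\vecw_h^{(j)}=\vecv_h^{(j)}$ in~\eqref{E:1.5}. The term $\lambda_1\inpro{\vecm_h^{(j)}\times\vecv_h^{(j)}}{\vecv_h^{(j)}}_{\mL^2(D)}$ vanishes identically since $\veca\times\vecb\perp\vecb$, so we are left with
\begin{align*}
-\lambda_2\|\vecv_h^{(j)}\|_{\mL^2(D)}^2
=
\mu\inpro{\nabla\vecm_h^{(j)}}{\nabla\vecv_h^{(j)}}_{\mL^2(D)}
+
k\theta\mu\|\nabla\vecv_h^{(j)}\|_{\mL^2(D)}^2
+
\mu F(t_j,\vecm_h^{(j)},\vecv_h^{(j)}).
\end{align*}
Next I would handle the cross term $\inpro{\nabla\vecm_h^{(j)}}{\nabla\vecv_h^{(j)}}_{\mL^2(D)}$ using the elementary identity $2\inpro{\veca}{\vecb}=|\veca+\vecb|^2-|\veca|^2-|\vecb|^2$. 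Writing $\vecm_h^{(j)}+k\vecv_h^{(j)}$ in place of $\veca+\vecb$ with appropriate scaling (i.e. multiplying the identity by $1/k$), one gets
\[
2k\inpro{\nabla\vecm_h^{(j)}}{\nabla\vecv_h^{(j)}}_{\mL^2(D)}
=
\|\nabla(\vecm_h^{(j)}+k\vecv_h^{(j)})\|_{\mL^2(D)}^2
-
\|\nabla\vecm_h^{(j)}\|_{\mL^2(D)}^2
-
k^2\|\nabla\vecv_h^{(j)}\|_{\mL^2(D)}^2.
\]
Here is where Lemma~\ref{lem:bar} enters: by~\eqref{equ:mhj 1} the nodal values of $\vecm_h^{(j)}+k\vecv_h^{(j)}$ have modulus $\ge 1$, and $\vecm_h^{(j+1)}$ is its nodal normalization, so~\eqref{E:InE} gives $\|\nabla\vecm_h^{(j+1)}\|_{\mL^2(D)}^2\le\|\nabla(\vecm_h^{(j)}+k\vecv_h^{(j)})\|_{\mL^2(D)}^2$. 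Combining these, multiplying through by $2k/\mu$ and rearranging, the $\theta$-term contributes $k^2(2\theta-1)\|\nabla\vecv_h^{(j)}\|_{\mL^2(D)}^2\ge 0$ (this is exactly where $\theta\ge 1/2$ is used), and we arrive at a one-step inequality of the form
\[
\|\nabla\vecm_h^{(j+1)}\|_{\mL^2(D)}^2
+
2\mu^{-1}\lambda_2 k\|\vecv_h^{(j)}\|_{\mL^2(D)}^2
+
k^2(2\theta-1)\|\nabla\vecv_h^{(j)}\|_{\mL^2(D)}^2
\le
\|\nabla\vecm_h^{(j)}\|_{\mL^2(D)}^2
-
2k F(t_j,\vecm_h^{(j)},\vecv_h^{(j)}).
\]
Summing from $i=0$ to $j-1$, taking expectations, and bounding $\|\nabla\vecm_h^{(0)}\|_{\mL^2(D)}^2=\|\nabla I_{\mV_h}\vecm_0\|_{\mL^2(D)}^2\le c\|\vecm_0\|_{\mH^1(D)}^2$ via interpolation, it remains to control $k\sum_i\mE|F(t_i,\vecm_h^{(i)},\vecv_h^{(i)})|$.

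The main obstacle is this last term, since $F$ is stochastic and pairs $\vecm_h^{(i)}$ (whose gradient we are trying to bound) against $\vecv_h^{(i)}$. I would apply Lemma~\ref{lem: boundExF}, specifically the estimate~\eqref{eq: boundExF}, but with care: applied on each subinterval it gives $\mE\sup_{s\le t_i}|F(s,\vecm_h^{(i)},\vecv_h^{(i)})|\le c\epsilon\mE\|\vecm_h^{(i)}\|_{\mH^1(D)}^2+c\epsilon^{-1}\mE\|\vecv_h^{(i)}\|_{\mL^2(D)}^2$. Here $\|\vecm_h^{(i)}\|_{\mH^1(D)}^2 = \|\vecm_h^{(i)}\|_{\mL^2(D)}^2+\|\nabla\vecm_h^{(i)}\|_{\mL^2(D)}^2$, and the $\mL^2$ part is bounded by $|D|^2$ via Lemma~\ref{lem:mhj}. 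Choosing $\epsilon$ small (proportional to $\mu^{-1}\lambda_2$) absorbs the $\mE\|\vecv_h^{(i)}\|_{\mL^2(D)}^2$ contribution into the good term on the left-hand side, leaving $ck\sum_{i=0}^{j-1}\mE\|\nabla\vecm_h^{(i)}\|_{\mL^2(D)}^2 + c$ on the right. A discrete Gronwall argument over $i=0,\dots,j$ (with $jk\le T$) then closes the estimate and yields the claimed bound with a constant $c$ depending only on $\vecm_0$, $\{\vecg_i\}$, $\lambda_1$, $\lambda_2$ (and $T$). One technicality to watch: the factor $F$ should be estimated so that the constant is genuinely deterministic, which is why Lemma~\ref{lem: boundExF} is phrased in terms of expectations rather than pathwise bounds; invoking it at level $t_i$ rather than pathwise is the right move. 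I would also double check that~\eqref{eq: boundExF} applied with $\vecu=\vecm_h^{(i)}$, $\vecv=\vecv_h^{(i)}$ is legitimate, i.e.\ that these are in $L^2(\Omega;\mH^1(D))$ and $L^2(\Omega;\mL^2(D))$ respectively, which follows inductively from the scheme together with the bounds already established at step $i$.
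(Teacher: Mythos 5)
Your proposal is correct and follows essentially the same route as the paper: testing \eqref{E:1.5} with $\vecw_h^{(j)}=\vecv_h^{(j)}$, invoking Lemma~\ref{lem:bar} via the polarization identity, absorbing the $F$-term with \eqref{eq: boundExF} and a small $\epsilon$ proportional to $\mu^{-1}\lambda_2$, and closing with a discrete Gronwall argument. The only minor divergence is at the initial datum, where the paper bounds $\|\nabla\vecm_h^{(0)}\|_{\mL^2(D)}$ using $\vecm_0\in\mH^2(D)$ (nodal interpolation is not defined for general $\mH^1$ data in 3D), but this does not affect the substance of the argument.
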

\begin{proof}
Taking $\vecw_h^{(j)}=\vecv_h^{(j)}$
in equation~\eqref{E:1.5} yields to the following identity:
\begin{align*}
-\lambda_2\left \| \vecv_h^{(j)}\right\|_{\mL^2(D)}^2
=
\mu \inpro{\nabla\vecm_h^{(j)}}{\nabla \vecv_h^{(j)}}_{\mL^2(D)}
+
\mu k \theta \left \| \nabla\vecv_h^{(j)}\right\|_{\mL^2(D)}^2
+\mu F\bigl(t_j,\vecm_h^{(j)}, \vecv_h^{(j)}\bigr),
\end{align*}
or equivalently
\begin{align}\label{eq: boundApp1}
\mu \inpro{\nabla\vecm_h^{(j)}}{\nabla \vecv_h^{(j)}}_{\mL^2(D)}
=
-\lambda_2\left \| \vecv_h^{(j)}\right\|_{\mL^2(D)}^2
-
\mu k \theta \left \| \nabla\vecv_h^{(j)}\right\|_{\mL^2(D)}^2
-
\mu F\bigl(t_j,\vecm_h^{(j)}, \vecv_h^{(j)}\bigr).
\end{align}
From Lemma~\ref{lem:bar} it follows that
\[
\left\|\nabla\vecm_h^{(j+1)}\right\|_{\mL^2(D)}^2
\leq
\left\|\nabla(\vecm_h^{(j)}+k\vecv_h^{(j)})\right\|_{\mL^2(D)}^2,
\]
or equivalently,
\[
\left \| \nabla \vecm_h^{(j+1)} \right \| _{\mL^2(D)}^2
\leq
\left \| \nabla \vecm_h^{(j)} \right \|_{\mL^2(D)}^2
+ k ^2 \left \| \nabla \vecv_h^{(j)} \right \| _{\mL^2(D)}^2
+
2k \inpro{\nabla\vecm_h^{(j)}}{\nabla \vecv_h^{(j)}}_{\mL^2(D)}.
\]
Therefore, together with~\eqref{eq: boundApp1}, we deduce
\begin{align*}
\left \| \nabla \vecm_h^{(j+1)} \right \| _{\mL^2(D)}^2
+
2k\mu^{-1}\lambda_2\left \| \vecv_h^{(j)}\right\|_{\mL^2(D)}^2
+
k^2(2\theta-1)
&\left \| \nabla\vecv_h^{(j)}\right\|_{\mL^2(D)}^2
\leq
\left \| \nabla \vecm_h^{(j)} \right \|_{\mL^2(D)}^2\\
&-
2k 
F\bigl(t_j,\vecm_h^{(j)}, \vecv_h^{(j)}\bigr).
\end{align*}
Thus, it follows from~\eqref{eq: boundExF} that
\begin{align*}
\mE
\left \| \nabla \vecm_h^{(j+1)} \right \| _{\mL^2(D)}^2
&+
2k\mu^{-1}\lambda_2\mE\left \| \vecv_h^{(j)}\right\|_{\mL^2(D)}^2
+
k^2(2\theta-1)
\mE\left \| \nabla\vecv_h^{(j)}\right\|_{\mL^2(D)}^2\\
&\leq
\mE\left \| \nabla \vecm_h^{(j)} \right \|_{\mL^2(D)}^2
+
2k \mE\left[
\bigl|F\bigl(t_j,\vecm_h^{(j)}, \vecv_h^{(j)}\bigr)\big|\right]\\
&\leq 
(1+kc\epsilon T)\mE\left \| \nabla \vecm_h^{(j)} \right \|_{\mL^2(D)}^2
+ 
ck\epsilon (T+T^{1/2})\mE\left \|\vecm_h^{(j)} \right \|_{\mL^2(D)}^2\\
&\quad+ 
ck\epsilon^{-1} (T+T^{1/2}+1)\mE\left \|\vecv_h^{(j)} \right \|_{\mL^2(D)}^2.
\end{align*}
By choosing $\epsilon = \frac{\mu^{-1}\lambda_2}{c(T+T^{1/2}+1)}$ in the right hand side 
of this inequality and using Lemma~\ref{lem:mhj} we deduce
\begin{align*}
\mE
\left \| \nabla \vecm_h^{(j+1)} \right \| _{\mL^2(D)}^2
&+
k\mu^{-1}\lambda_2\mE\left \| \vecv_h^{(j)}\right\|_{\mL^2(D)}^2
+
k^2(2\theta-1)
\mE\left \| \nabla\vecv_h^{(j)}\right\|_{\mL^2(D)}^2\\
&\leq 
ck+
(1+kc)\mE\left \| \nabla \vecm_h^{(j)} \right \|_{\mL^2(D)}^2.
\end{align*}
Replacing $j$ by $i$ in the above inequality and summing for $i$ from $0$ 
to $j-1$ yeilds
\begin{align}\label{eq: boundApp2}
\mE
\left \| \nabla \vecm_h^{(j)} \right \| _{\mL^2(D)}^2
&+
k\sum_{i=1}^{j-1}
\mu^{-1}\lambda_2\mE\left \| \vecv_h^{(i)}\right\|_{\mL^2(D)}^2
+
k^2(2\theta-1)\sum_{i=1}^{j-1}
\mE\left \| \nabla\vecv_h^{(i)}\right\|_{\mL^2(D)}^2\nn\\
&\leq 
ckj+
c\| \nabla \vecm_h^{(0)} \|_{\mL^2(D)}^2
+
kc\sum_{i=1}^{j-1}
\mE\left \| \nabla \vecm_h^{(i)} \right \|_{\mL^2(D)}^2.
\end{align}
Since $\vecm_0\in\mH^2(D)$ it can be shown that there
exists a deterministic constant $c$ depending only on $\vecm_0$
such that
\begin{equation}\label{equ:cm0}
\norm{\nabla\vecm_h^{(0)}}{\mL^2(D)}
\le c.
\end{equation}
Hence, inequality~\eqref{eq: boundApp2} implies
\begin{align}\label{eq: boundApp3}
\mE
\left \| \nabla \vecm_h^{(j)} \right \| _{\mL^2(D)}^2
+
k\sum_{i=1}^{j-1}
\mu^{-1}\lambda_2\mE\left \| \vecv_h^{(i)}\right\|_{\mL^2(D)}^2
&+
k^2(2\theta-1)\sum_{i=1}^{j-1}
\mE\left \| \nabla\vecv_h^{(i)}\right\|_{\mL^2(D)}^2\nn\\
&\leq 
c
+
kc\sum_{i=1}^{j-1}
\mE\left \| \nabla \vecm_h^{(i)} \right \|_{\mL^2(D)}^2.
\end{align}
By using induction and~\eqref{equ:cm0} we can show that 
\[
\mE
\norm{\nabla\vecm_h^i}{\mL^2(D)}^2
\le
c(1+ck)^i.
\]
Summing over $i$ from 0 to $j-1$ and using $1+x\le e^x$ we
obtain
\[
k\sum_{i=0}^{j-1}\mE
\left\|\nabla\vecm_h^i\right \|_{\mL^2(D)}^2
\le
ck \frac{(1+ck)^j-1}{ck}
\le
e^{ckJ} = c.
\]
This together with~\eqref{eq: boundApp3} gives the desired
result.
\end{proof}
\section{The main result}\label{sec:pro}
In this section, we will use the finite element function $\vecm_h^{(j)}$ to construct a sequence of functions that converges in an appropriate sense to a function that is a weak martingale solution of~\eqref{E:1.1} in the sense of Definition~\ref{def:wea sol}.

The discrete solutions $\vecm_h^{(j)}$ and $\vecv_h^{(j)}$
constructed via Algorithm~\ref{Algo:1}
are interpolated in time in the following definition.
\begin{definition}\label{def:mhk}
For all $x\in D$, $\vecu,\vecv\in\mV_h$ and all $t\in[0,T]$, let
$j\in \{ 0,...,J \}$ be
such that  $t \in [t_j, t_{j+1})$. We then define
\begin{align*}
\vecm_{h,k}(t,x)
&:=
\frac{t-t_j}{k}\vecm_h^{(j+1)}(x)
+
\frac{t_{j+1}-t}{k}\vecm_h^{(j)}(x), \\
\vecm_{h,k}^{-}(t,x)
&:=
\vecm_h^{(j)}(x), \\
\vecv_{h,k}(t,x)
&:=
\vecv_h^{(j)}(x),\\
F_{k}(t,\vecu,\vecv)
&:=
F(t_j,\vecu,\vecv)\quad\mP-\text{a.s.}.
\end{align*}
\end{definition}
We note that  $\vecm_{h,k}(t)$ is an $\cF_{t_j}$ adapted process for $t\in[t_j,t_{j+1})$.
The above sequences have the following obvious bounds.
\begin{lemma}\label{lem:3.2a}
There exist a deterministic constant $c$
depending on $\vecm_0$, $\vecg$, $\mu_1$, $\mu_2$ and $T$
such that for all $\theta\in[1/2,1]$,
\begin{align*}
\mE\norm{\vecm_{h,k}^*}{\mL^2(D_T)}^2 +
\mE\left \| \nabla \vecm_{h,k}^* \right \|_{\mL^2(D_T)}^2
+
\mE\left \| \vecv_{h,k}\right\|_{\mL^2(D_T)}^2
+
k (2\theta-1)
\mE\left\| \nabla \vecv_{h,k} \right\|_{\mL^2(D_T)}^2
\leq
c,
\end{align*}
where $\vecm_{h,k}^*=\vecm_{h,k}$ or $\vecm_{h,k}^-$.
In particular,
when $\theta\in[0,\frac{1}{2})$,
\begin{align*}
\mE\norm{\vecm_{h,k}^*}{\mL^2(D_T)}^2
+
&\mE\left \| \nabla \vecm_{h,k}^* \right \| _{\mL^2(D_T)}^2
+
\big(1+(2\theta-1)kh^{-2}\big)
\mE\left \| \vecv_{h,k}\right\|_{\mL^2(D_T)}^2
\leq
c.
\end{align*}
\end{lemma}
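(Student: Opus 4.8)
The strategy is to read off all four bounds directly from Lemma~\ref{lem:3.2} by expressing the $\mL^2(D_T)$-norms of the time interpolants as Riemann sums over the slabs $[t_j,t_{j+1})$. For the piecewise-constant interpolants one has the exact identities $\|\vecm_{h,k}^-\|_{\mL^2(D_T)}^2 = k\sum_{j=0}^{J-1}\|\vecm_h^{(j)}\|_{\mL^2(D)}^2$, and the analogous ones for $\nabla\vecm_{h,k}^-$, $\vecv_{h,k}$ and $\nabla\vecv_{h,k}$ (so that $k\|\nabla\vecv_{h,k}\|_{\mL^2(D_T)}^2 = k^2\sum_{j=0}^{J-1}\|\nabla\vecv_h^{(j)}\|_{\mL^2(D)}^2$). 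For the piecewise-linear interpolant $\vecm_{h,k}$, on each slab $\vecm_{h,k}(t)$ and $\nabla\vecm_{h,k}(t)$ are convex combinations of the two neighbouring nodal functions, so convexity of $\|\cdot\|_{\mL^2(D)}^2$ and integration in $t$ give $\|\vecm_{h,k}\|_{\mL^2(D_T)}^2 \le k\sum_{j=0}^{J}\|\vecm_h^{(j)}\|_{\mL^2(D)}^2$ and the same inequality with $\nabla$.

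First I would treat $\theta\in[1/2,1]$. The two magnetisation $\mL^2$ terms are handled by Lemma~\ref{lem:mhj}, which bounds $\|\vecm_h^{(j)}\|_{\mL^2(D)}$ deterministically; the gradient-of-magnetisation term follows from the uniform bound $\mE\|\nabla\vecm_h^{(j)}\|_{\mL^2(D)}^2\le c$ in Lemma~\ref{lem:3.2} together with~\eqref{equ:cm0} for $j=0$; and the two $\vecv$-terms are precisely the second and third summands on the left of the inequality in Lemma~\ref{lem:3.2} evaluated at $j=J$, so taking expectations finishes the first display. The only point needing care is that those summands in Lemma~\ref{lem:3.2} start at $i=1$: since that inequality is obtained by telescoping a per-step estimate, summing instead from $i=0$ retains the extra terms $k\mu^{-1}\lambda_2\mE\|\vecv_h^{(0)}\|_{\mL^2(D)}^2$ and $k^2(2\theta-1)\mE\|\nabla\vecv_h^{(0)}\|_{\mL^2(D)}^2$, which are non-negative for $\theta\ge 1/2$, so nothing is lost.

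For $\theta\in[0,\tfrac12)$ I would go back to the per-step inequality used in the proof of Lemma~\ref{lem:3.2} just before~\eqref{eq: boundExF} is invoked, namely
\begin{align*}
\|\nabla\vecm_h^{(j+1)}\|_{\mL^2(D)}^2
&+ 2k\mu^{-1}\lambda_2\|\vecv_h^{(j)}\|_{\mL^2(D)}^2
+ k^2(2\theta-1)\|\nabla\vecv_h^{(j)}\|_{\mL^2(D)}^2\\
&\le \|\nabla\vecm_h^{(j)}\|_{\mL^2(D)}^2 - 2k\,F\bigl(t_j,\vecm_h^{(j)},\vecv_h^{(j)}\bigr),
\end{align*}
which holds for every $\theta\ge 0$. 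Now $2\theta-1<0$, so I would move the gradient term to the right and estimate it by the inverse inequality $\|\nabla\vecv_h^{(j)}\|_{\mL^2(D)}^2\le c\,h^{-2}\|\vecv_h^{(j)}\|_{\mL^2(D)}^2$ on the mesh $\mT_h$; absorbing it against the left-hand term $2k\mu^{-1}\lambda_2\|\vecv_h^{(j)}\|_{\mL^2(D)}^2$ turns the coefficient of $k\|\vecv_h^{(j)}\|_{\mL^2(D)}^2$ into one of the form $c\bigl(1+(2\theta-1)kh^{-2}\bigr)$. After this single change the rest of the proof of Lemma~\ref{lem:3.2} --- bounding $F$ via~\eqref{eq: boundExF}, using Lemma~\ref{lem:mhj}, and closing with the discrete Gronwall inequality for $\mE\|\nabla\vecm_h^{(i)}\|_{\mL^2(D)}^2$ --- carries over verbatim, and summing over $j$ and passing back through $\|\vecv_{h,k}\|_{\mL^2(D_T)}^2 = k\sum_j\|\vecv_h^{(j)}\|_{\mL^2(D)}^2$ gives the second display.

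The only genuinely new ingredient beyond Lemma~\ref{lem:3.2} is the inverse estimate on $\mT_h$, and I expect the one delicate point to be tracking the constant from that estimate through the absorption so that the coefficient appearing on the left is exactly $1+(2\theta-1)kh^{-2}$ (up to a harmless renaming of the right-hand constant $c$); this second bound is of course informative only when that coefficient is positive, i.e.\ under a CFL-type restriction relating $k$ and $h^2$. I do not anticipate any other obstacle.
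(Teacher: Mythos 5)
Your proposal is correct and follows essentially the same route as the paper: rewriting the $\mL^2(D_T)$-norms of the interpolants as Riemann sums, invoking Lemma~\ref{lem:mhj} and Lemma~\ref{lem:3.2} for the first display, and using the inverse estimate $\|\nabla\vecv_h^{(j)}\|_{\mL^2(D)}^2\le ch^{-2}\|\vecv_h^{(j)}\|_{\mL^2(D)}^2$ to handle the case $\theta\in[0,\tfrac12)$. Your version is simply more explicit than the paper's (in particular about recovering the $i=0$ term from the telescoping and about the constant in the absorbed coefficient), and those added details are accurate.
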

\begin{proof}
It is easy to see that
\[
\norm{\vecm_{h,k}^-}{\mL^2(D_T)}^2
=
k \sum_{i=0}^{J-1} \norm{\vecm_{h}^{(i)}}{\mL^2(D)}^2
\quad\text{and}\quad
\norm{\vecv_{h,k}}{\mL^2(D_T)}^2
=
k \sum_{i=0}^{J-1} \norm{\vecv_{h}^{(i)}}{\mL^2(D)}^2.
\]
Both inequalities are direct consequences of
Definition~\ref{def:mhk},
Lemmas~\ref{lem:mhj}, and~\ref{lem:3.2}, noting that
the second inequality requires the use of
the inverse estimate (see e.g.~\cite{Johnson87})
\[
\norm{\nabla\vecv_{h}^{(i)}}{\mL^2(D)}^2
\leq
ch^{-2}
\norm{\vecv_{h}^{(i)}}{\mL^2(D)}^2.
\]
\end{proof}

The next lemma provides a bound of $\vecm_{h,k}$ in the
$\mH^1$-norm and establishes relationships between $\vecm_{h,k}^-$,
$\vecm_{h,k}$ and $\vecv_{h,k}$.
\begin{lemma}\label{lem:3.4}
Assume that $h$ and $k$ approach $0$, with the further condition $k=o(h^2)$ when $\theta\in[0,\frac{1}{2})$. The sequences $\{\vecm_{h,k}\}$, $\{\vecm_{h,k}^{-}\}$, and
$\{\vecv_{h,k}\}$ defined in
Definition~\ref{def:mhk} satisfy the following properties:
\begin{align}
\mE\norm{\vecm_{h,k}}{\mH^1(D_T)}^2
&\le c, \label{equ:mhk h1} \\
\mE\norm{\vecm_{h,k}-\vecm_{h,k}^-}{\mL^2(D_T)}^2
&\le ck^2, \label{equ:mhk mhkm} \\
\mE\norm{\vecv_{h,k}-\pa_t\vecm_{h,k}}{\mL^1(D_T)}
&\le ck, \label{equ:vhk mhk} \\
\mE\norm{|\vecm_{h,k}|-1}{\mL^2(D_T)}^2
&\le c(h^2+k^2). \label{equ:mhk 1}
\end{align}
\end{lemma}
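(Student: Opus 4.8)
The plan is to reduce all four estimates to the nodewise structure of Algorithm~\ref{Algo:1} together with the a priori bounds of Lemma~\ref{lem:3.2a}. Fix $j\in\{0,\dots,J-1\}$ and a node $\vecx_n$, and abbreviate $\veca:=\vecm_h^{(j)}(\vecx_n)$ and $\vecb:=\vecv_h^{(j)}(\vecx_n)$; by~\eqref{equ:Whj} and~\eqref{equ:mhj 1} we have $|\veca|=1$ and $\veca\cdot\vecb=0$, hence $\rho_n:=|\veca+k\vecb|=\sqrt{1+k^2|\vecb|^2}\ge 1$. Since Step~3 of the algorithm sets $\vecm_h^{(j+1)}(\vecx_n)=(\veca+k\vecb)/\rho_n$, a direct computation using $\veca\cdot\vecb=0$ gives, for $t\in[t_j,t_{j+1})$,
\begin{align*}
\bigl|\vecm_h^{(j+1)}(\vecx_n)-\vecm_h^{(j)}(\vecx_n)\bigr|^2
&=\frac{2(\rho_n-1)}{\rho_n},\\
\Bigl|\vecv_h^{(j)}(\vecx_n)-\tfrac1k\bigl(\vecm_h^{(j+1)}(\vecx_n)-\vecm_h^{(j)}(\vecx_n)\bigr)\Bigr|
&=\frac{\rho_n-1}{k},\\
1-\bigl|\vecm_{h,k}(t,\vecx_n)\bigr|^2
&=\frac{(t-t_j)(t_{j+1}-t)}{k^2}\,\bigl|\vecm_h^{(j+1)}(\vecx_n)-\vecm_h^{(j)}(\vecx_n)\bigr|^2 .
\end{align*}
Using $\rho_n-1\le\tfrac12(\rho_n^2-1)=\tfrac12k^2|\vecb|^2$ and $\rho_n(\rho_n+1)\ge2$, these identities give the nodewise bounds $|\vecm_h^{(j+1)}(\vecx_n)-\vecm_h^{(j)}(\vecx_n)|\le k|\vecv_h^{(j)}(\vecx_n)|$, $|\vecv_h^{(j)}(\vecx_n)-\tfrac1k(\vecm_h^{(j+1)}(\vecx_n)-\vecm_h^{(j)}(\vecx_n))|\le\tfrac k2|\vecv_h^{(j)}(\vecx_n)|^2$, and $0\le 1-|\vecm_{h,k}(t,\vecx_n)|^2\le\tfrac14 k^2|\vecv_h^{(j)}(\vecx_n)|^2$ for $t\in[t_j,t_{j+1})$. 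Feeding the first two into the lumped-mass norm equivalence on $\mV_h$ for the shape-regular mesh $\mT_h$ (namely $\norm{\vecw_h}{\mL^2(D)}^2$ is comparable to $\sum_n|\omega_n|\,|\vecw_h(\vecx_n)|^2$ and $\norm{\vecw_h}{\mL^1(D)}\le c\sum_n|\omega_n|\,|\vecw_h(\vecx_n)|$, with $\omega_n$ the support of the hat function $\phi_n$) produces the two workhorse inequalities
\[
\norm{\vecm_h^{(j+1)}-\vecm_h^{(j)}}{\mL^2(D)}^2\le ck^2\,\norm{\vecv_h^{(j)}}{\mL^2(D)}^2,
\qquad
\norm{\vecv_h^{(j)}-\tfrac1k(\vecm_h^{(j+1)}-\vecm_h^{(j)})}{\mL^1(D)}\le ck\,\norm{\vecv_h^{(j)}}{\mL^2(D)}^2 .
\]

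Granted these, three of the four estimates are bookkeeping over the subintervals $[t_j,t_{j+1})$. For~\eqref{equ:mhk h1}, the terms $\mE\norm{\vecm_{h,k}}{\mL^2(D_T)}^2$ and $\mE\norm{\nabla\vecm_{h,k}}{\mL^2(D_T)}^2$ are already controlled by Lemma~\ref{lem:3.2a}, and since $\partial_t\vecm_{h,k}=\tfrac1k(\vecm_h^{(j+1)}-\vecm_h^{(j)})$ on $[t_j,t_{j+1})$, the first workhorse inequality gives $\mE\norm{\partial_t\vecm_{h,k}}{\mL^2(D_T)}^2\le c\sum_j k\,\mE\norm{\vecv_h^{(j)}}{\mL^2(D)}^2=c\,\mE\norm{\vecv_{h,k}}{\mL^2(D_T)}^2\le c$, the last step being Lemma~\ref{lem:3.2a} (where the hypothesis $k=o(h^2)$ is used when $\theta\in[0,\tfrac12)$). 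For~\eqref{equ:mhk mhkm}, on $[t_j,t_{j+1})$ one has $\vecm_{h,k}-\vecm_{h,k}^-=\tfrac{t-t_j}{k}(\vecm_h^{(j+1)}-\vecm_h^{(j)})$, so $\norm{\vecm_{h,k}(t)-\vecm_{h,k}^-(t)}{\mL^2(D)}^2\le\norm{\vecm_h^{(j+1)}-\vecm_h^{(j)}}{\mL^2(D)}^2\le ck^2\norm{\vecv_h^{(j)}}{\mL^2(D)}^2$; integrating in $t$, summing in $j$ and taking $\mE$ bounds this by $ck^2\,\mE\norm{\vecv_{h,k}}{\mL^2(D_T)}^2\le ck^2$. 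For~\eqref{equ:vhk mhk}, the difference $\vecv_{h,k}-\partial_t\vecm_{h,k}$ equals $\vecv_h^{(j)}-\tfrac1k(\vecm_h^{(j+1)}-\vecm_h^{(j)})$ on $[t_j,t_{j+1})$, so the second workhorse inequality gives $\norm{\vecv_{h,k}-\partial_t\vecm_{h,k}}{\mL^1(D_T)}=\sum_j k\,\norm{\vecv_h^{(j)}-\tfrac1k(\vecm_h^{(j+1)}-\vecm_h^{(j)})}{\mL^1(D)}\le ck\sum_j k\,\norm{\vecv_h^{(j)}}{\mL^2(D)}^2=ck\,\norm{\vecv_{h,k}}{\mL^2(D_T)}^2$, and taking $\mE$ gives $\le ck$.

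The real content is~\eqref{equ:mhk 1}, which I expect to be the main (though still routine) obstacle, as it is the only step in which $h$ and a spatial interpolation argument enter. Fix $t\in[t_j,t_{j+1})$. Since each $\vecm_h^{(i)}$ has $\mL^\infty(D)$-norm at most $1$ by Lemma~\ref{lem:mhj}, $\vecm_{h,k}(t)$ is a pointwise convex combination of $\vecm_h^{(j)}$ and $\vecm_h^{(j+1)}$, hence $\norm{\vecm_{h,k}(t)}{\mL^\infty(D)}\le1$; consequently, a.e.\ in $D$,
\[
\bigl(|\vecm_{h,k}(t)|-1\bigr)^2\le\bigl(1-|\vecm_{h,k}(t)|^2\bigr)^2\le\bigl|\,1-|\vecm_{h,k}(t)|^2\,\bigr| ,
\]
so it suffices to control $\norm{\,1-|\vecm_{h,k}(t)|^2\,}{\mL^1(D)}$. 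I would split it as $1-|\vecm_{h,k}(t)|^2=\bigl(I_{\mV_h}|\vecm_{h,k}(t)|^2-|\vecm_{h,k}(t)|^2\bigr)+\bigl(1-I_{\mV_h}|\vecm_{h,k}(t)|^2\bigr)$. On each $K\in\mT_h$ the function $\vecx\mapsto|\vecm_{h,k}(t,\vecx)|^2$ is a quadratic whose (constant) second derivatives on $K$ are bounded in modulus by $c\,|\nabla\vecm_{h,k}(t)|^2$, so the affine-interpolation error estimate gives $\norm{I_{\mV_h}|\vecm_{h,k}(t)|^2-|\vecm_{h,k}(t)|^2}{\mL^1(K)}\le ch_K^2\,\norm{\nabla\vecm_{h,k}(t)}{\mL^2(K)}^2$, and summing over $K$ bounds the first term by $ch^2\,\norm{\nabla\vecm_{h,k}(t)}{\mL^2(D)}^2$. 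The second term is piecewise linear with nodal values $1-|\vecm_{h,k}(t,\vecx_n)|^2\in[0,\tfrac14 k^2|\vecv_h^{(j)}(\vecx_n)|^2]$, so the lumped-mass bound gives $\norm{1-I_{\mV_h}|\vecm_{h,k}(t)|^2}{\mL^1(D)}\le ck^2\,\norm{\vecv_h^{(j)}}{\mL^2(D)}^2$. Combining, $\norm{\,1-|\vecm_{h,k}(t)|^2\,}{\mL^1(D)}\le ch^2\,\norm{\nabla\vecm_{h,k}(t)}{\mL^2(D)}^2+ck^2\,\norm{\vecv_h^{(j)}}{\mL^2(D)}^2$ on $[t_j,t_{j+1})$; integrating over $[0,T]$, taking $\mE$, and using~\eqref{equ:mhk h1} and Lemma~\ref{lem:3.2a} yields $\mE\norm{\,|\vecm_{h,k}|-1\,}{\mL^2(D_T)}^2\le c(h^2+k^2)$, as required.
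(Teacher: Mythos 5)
Your proof is correct, and it takes the same route the paper intends: the paper's own ``proof'' merely cites Lemma~\ref{lem:3.2a} together with the arguments of \cite[Lemma 6.3]{BNT2016}, and those arguments are exactly the nodewise identities exploiting $|\vecm_h^{(j)}(\vecx_n)|=1$ and $\vecm_h^{(j)}(\vecx_n)\cdot\vecv_h^{(j)}(\vecx_n)=0$, the discrete norm equivalence of Lemma~\ref{lem:nor equ}, and the elementwise interpolation estimate for $|\vecm_{h,k}|^2$ that you carry out in detail. All four estimates check out, including the correct invocation of $k=o(h^2)$ to secure the bound on $\mE\norm{\vecv_{h,k}}{\mL^2(D_T)}^2$ when $\theta\in[0,\tfrac12)$.
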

\begin{proof}
The results can be obtained by using Lemma~\ref{lem:3.2a} and the arguments in the proof 
of~\cite[Lemma 6.3]{BNT2016}.
\end{proof}
We now prove 
some properties of $F$ and $F_k$, which will be used in the next two lemmas.
\begin{lemma}\label{lem: Fbound}
For any $\vecu,\vecv\in L^2\bigl(\Omega;L^2(0,T;\mH^1(D))\bigr)$, there exists a constant $c$ depending on 
$T$ and $\{\vecg_i\}_{i=1,\cdots,q}$ such that 
\begin{equation}\label{eq: Fbound10}
\mE[
\int_0^T 
|F^*(t,\vecu(t,\cdot),\vecv(t,\cdot))|\dt
] 
\leq
c
\bigl(\mE[
\|\vecu\|_{\mL^2(D_T)}^2]\bigr)^{1/2}
\bigg(
\bigl(\mE[\|\nabla\vecv\|_{\mL^2(D_T)}^2
]\bigr)^{1/2}
+
\bigl(\mE[\|\vecv\|_{\mL^2(D_T)}^2
]\bigr)^{1/2} \bigg),
\end{equation}
here, $F^* = F$ or $F_k$.
Furthermore,
\begin{align}\label{eq: Fbound12}
\mE[
\int_0^T 
&|F(t,\vecu(t,\cdot),\vecv(t,\cdot))-F_k(t,\vecu(t,\cdot),\vecv(t,\cdot))|\dt
] \nn\\
&\leq
ck^{1/2}
\bigl(\mE[
\|\vecu\|_{\mL^2(D_T)}^2]\bigr)^{1/2}
\bigg(
\bigl(\mE[\|\nabla\vecv\|_{\mL^2(D_T)}^2
]\bigr)^{1/2}
+
\bigl(\mE[\|\vecv\|_{\mL^2(D_T)}^2
]\bigr)^{1/2} \bigg).
\end{align}
\end{lemma}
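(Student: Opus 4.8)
The plan is to reduce both estimates to the pointwise-in-time bound of Lemma~\ref{lem: Fbound9}, integrated over $[0,T]$ and combined with the Cauchy--Schwarz inequality in the time variable.

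First, for \eqref{eq: Fbound10} with $F^* = F$: for each fixed $t\in[0,T]$ apply Lemma~\ref{lem: Fbound9} with $s=t$ and the frozen spatial fields $\vecu(t,\cdot)\in L^2(\Omega;\mL^2(D))$, $\vecv(t,\cdot)\in L^2(\Omega;\mH^1(D))$. Bounding $t\le T$ and $t^{1/2}\le T^{1/2}$ and absorbing these into the constant gives
\[
\mE|F(t,\vecu(t,\cdot),\vecv(t,\cdot))|\le c\bigl(\mE\|\vecu(t,\cdot)\|^2_{\mL^2(D)}\bigr)^{1/2}\Bigl[\bigl(\mE\|\nabla\vecv(t,\cdot)\|^2_{\mL^2(D)}\bigr)^{1/2}+\bigl(\mE\|\vecv(t,\cdot)\|^2_{\mL^2(D)}\bigr)^{1/2}\Bigr].
\]
Integrating over $t\in[0,T]$, applying the Cauchy--Schwarz inequality to the resulting $t$-integrals and using Tonelli's theorem to identify, e.g., $\int_0^T\mE\|\vecu(t,\cdot)\|^2_{\mL^2(D)}\dt$ with $\mE\|\vecu\|^2_{\mL^2(D_T)}$, yields \eqref{eq: Fbound10} for $F$. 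For $F^* = F_k$ the same computation is carried out on each subinterval $[t_j,t_{j+1})$, where $F_k(t,\cdot,\cdot)=F(t_j,\cdot,\cdot)$ with $t_j\le T$, so that all constants remain uniform; summing the subinterval contributions reconstitutes the integral over $[0,T]$.

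For \eqref{eq: Fbound12} one uses that, on $[t_j,t_{j+1})$, the definitions of $F$ (Lemma~\ref{lem: gradZ2}) and of $F_k$ (Definition~\ref{def:mhk}) give
\[
F(t,\vecu(t,\cdot),\vecv(t,\cdot))-F_k(t,\vecu(t,\cdot),\vecv(t,\cdot))=\sum_{i=1}^q\int_{t_j}^tF_{1,i}(s,\vecu(t,\cdot),\vecv(t,\cdot))\ds+\sum_{i=1}^q\int_{t_j}^tF_{2,i}(s,\vecu(t,\cdot),\vecv(t,\cdot))\,dW_i(s),
\]
i.e., the increment of $F$ over the interval $[t_j,t]$, whose length is at most $k$. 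Rerunning the chain of estimates behind Lemma~\ref{lem: Fbound9} with the time integration over $[t_j,t]$ instead of $[0,s]$ --- integration by parts using $\vecg_i\in\mW^{2,\infty}(D)$, the norm identity $\|Z_s\vecu\|_{\mL^2(D)}=\|\vecu\|_{\mL^2(D)}$ from \eqref{auE:normZ}, the gradient bound \eqref{auE:normdeZ}, the Cauchy--Schwarz inequality in $s$, and the Burkholder--Davis--Gundy inequality for the martingale part --- the bounded-variation term picks up a factor $O(k)$ while the martingale term picks up a factor $O(k^{1/2})$, and the latter dominates. This produces the pointwise bound
\[
\mE|F(t,\vecu(t,\cdot),\vecv(t,\cdot))-F_k(t,\vecu(t,\cdot),\vecv(t,\cdot))|\le ck^{1/2}\bigl(\mE\|\vecu(t,\cdot)\|^2_{\mL^2(D)}\bigr)^{1/2}\Bigl[\bigl(\mE\|\nabla\vecv(t,\cdot)\|^2_{\mL^2(D)}\bigr)^{1/2}+\bigl(\mE\|\vecv(t,\cdot)\|^2_{\mL^2(D)}\bigr)^{1/2}\Bigr],
\]
and integrating over $t\in[0,T]$ together with the same Cauchy--Schwarz and Tonelli step as before gives \eqref{eq: Fbound12}.

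The main obstacle is the increment estimate underlying the pointwise bound in the second part: one has to check carefully that over the short interval $[t_j,t]$ the drift contribution to $F-F_k$ genuinely costs only $k$ (hence is negligible), while the stochastic contribution, controlled by Burkholder--Davis--Gundy, yields exactly the advertised $k^{1/2}$, uniformly in $j$ and in the position of $t$ within $[t_j,t_{j+1})$. A secondary technical point, exactly as in Lemma~\ref{lem: Fbound9}, is to use the frozen fields $\vecu(t,\cdot),\vecv(t,\cdot)$ in a way compatible with the adaptedness required by the stochastic-integral representation of $F$ in Lemma~\ref{lem: gradZ2}.
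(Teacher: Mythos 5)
Your proposal is correct and follows essentially the same route as the paper: both parts reduce to the pointwise-in-time bound of Lemma~\ref{lem: Fbound9} (applied with $s=t$, resp.\ $s=t_j$, on each subinterval), and the difference $F-F_k$ is handled exactly as in the paper by viewing it as the increment of $F$ over $[t_j,t]$ (the paper's shifted process $\tilde F^j$ with Wiener increments $\tilde W_i$), whose drift part is $O(k)$ and whose martingale part is $O(k^{1/2})$, followed by Cauchy--Schwarz in $t$.
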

\begin{proof}
\underline{Proof of~\eqref{eq: Fbound10}:}
The first result of the lemma for $F^* = F$ can be deduced from
Lemma~\ref{lem: Fbound9} by replacing $s\equiv t$, $\vecu\equiv \vecu(t,\cdot)$, 
$\vecv\equiv \vecv(t,\cdot)$  and using H\"older's inequality as follows:
\begin{align}\label{eq: Fbound11}
\mE[
\int_0^T 
&|F(t,\vecu(t,\cdot),\vecv(t,\cdot))|\dt
]
=
\int_0^T \mE[
|F(t,\vecu(t,\cdot),\vecv(t,\cdot))|]\dt\nn\\
&\leq
c\int_0^T
\bigl(\mE[
\|\vecu(t,\cdot)\|_{\mL^2(D)}^2]\bigr)^{1/2}
\bigl(\mE[\|\nabla\vecv(t,\cdot)\|_{\mL^2(D)}^2
]\bigr)^{1/2}\dt\nn\\
&\quad+
c\int_0^T
\bigl(\mE[
\|\vecu(t,\cdot)\|_{\mL^2(D)}^2]\bigr)^{1/2}
\bigl(\mE[\|\vecv(t,\cdot)\|_{\mL^2(D)}^2
]\bigr)^{1/2} \dt\\
&\leq c
\bigl(\mE[
\|\vecu\|_{\mL^2(D_T)}^2]\bigr)^{1/2}
\bigg(
\bigl(\mE[\|\nabla\vecv\|_{\mL^2(D_T)}^2
]\bigr)^{1/2}
+
\bigl(\mE[\|\vecv\|_{\mL^2(D_T)}^2
]\bigr)^{1/2} \bigg).\nn
\end{align}

We first note that 
\begin{align*}
\mE[
\int_0^T 
|F_k(t,\vecu(t,\cdot),\vecv(t,\cdot))|\dt
]
&=
\int_0^T \mE[
|F_k(t,\vecu(t,\cdot),\vecv(t,\cdot))|]\dt\nn\\
&=
\sum_{j=0}^{J-1}
\int_{t_j}^{t_{j+1}} \mE[
|F(t_j,\vecu(t,\cdot),\vecv(t,\cdot))|]\dt,
\end{align*}
then apply Lemma~\ref{lem: Fbound9} for $ s\equiv t_j$, 
$\vecu\equiv \vecu(t,\cdot)$ and $\vecv\equiv \vecv(t,\cdot)$ to deduce
\begin{align*}
\mE[
\int_0^T 
&|F_k(t,\vecu(t,\cdot),\vecv(t,\cdot))|\dt
]
\leq 
c\sum_{j=0}^{J-1}
t_j\int_{t_j}^{t_{j+1}}
\bigl(\mE[
\|\vecu(t,\cdot) \|_{\mL^2(D)}^2]\bigr)^{1/2}
\bigl(\mE
[\|\nabla\vecv(t,\cdot) \|_{\mL^2(D)}^2]\bigr)^{1/2}\dt\nn\\
&\quad+
c\sum_{j=0}^{J-1}
( t_j^{1/2}+ t_j)\int_{t_j}^{t_{j+1}}
\bigl(\mE[
\|\vecu(t,\cdot) \|_{\mL^2(D)}^2]\bigr)^{1/2}
\bigl(\mE[\|\vecv(t,\cdot) \|_{\mL^2(D)}^2
]\bigr)^{1/2}\dt\nn\\
&\leq 
cT\int_0^T
\bigl(\mE[
\|\vecu(t,\cdot) \|_{\mL^2(D)}^2]\bigr)^{1/2}
\bigl(\mE
[\|\nabla\vecv(t,\cdot) \|_{\mL^2(D)}^2]\bigr)^{1/2}\dt\nn\\
&+
c(T+T^{1/2})
\int_0^T
\bigl(\mE[
\|\vecu(t,\cdot) \|_{\mL^2(D)}^2]\bigr)^{1/2}
\bigl(\mE[\|\vecv(t,\cdot) \|_{\mL^2(D)}^2
]\bigr)^{1/2}\dt.
\end{align*}
Hence, ~\eqref{eq: Fbound10} with function $F^*=F_k$ follows by using H\"older's inequality.

\underline{Proof of~\eqref{eq: Fbound12}:}
Noting that 
\begin{align}\label{eq: Fbound13}
\mE[
&\int_0^T 
|F(t,\vecu(t,\cdot),\vecv(t,\cdot))-F_k(t,\vecu(t,\cdot),\vecv(t,\cdot))|\dt\
]\nn\\
&=
\sum_{j=0}^J
\int_{t_j}^{t_{j+1}} 
\mE[
|F(t,\vecu(t,\cdot),\vecv(t,\cdot))-F(t_j,\vecu(t,\cdot),\vecv(t,\cdot))|]\dt\nn\\
&:=
\sum_{j=0}^J
\int_{t_j}^{t_{j+1}} 
\mE[
|\tilde{F}^j(t-t_j,\vecu(t,\cdot),\vecv(t,\cdot))|]\dt.
\end{align}
Here 
\[
\tilde{F}^j(t,\vecx,\vecy) :=  
\sum_{i=1}^q
\int_0^t
\tilde{F}^j_{1,i}(s,\vecx,\vecy)\ds
+
\sum_{i=1}^q\int_0^t
\tilde{F}^j_{2,i}(s,\vecx,\vecy) \,d\tilde{W}_i(s),
\]
in which
$\tilde{F}^j_{1,i}(s,\vecx,\vecy) =  F_{1,i}(s+t_j,\vecx,\vecy) $, 
$\tilde{F}^j_{2,i}(s,\vecx,\vecy) =  F_{2,i}(s+t_j,\vecx,\vecy)$ 
and $\tilde{W}_i(s) = W_i(s+t_j) - W_i(t_j)$. 
By using the same arguments as in the proof of Lemma~\ref{lem: Fbound9} we obtain the same result for 
the upper bound of $\tilde{F}^j$, namely
\begin{align*}
\mE
|\tilde{F}^j( s,\vecu ,\vecv )| 
&\leq 
c s \bigl(\mE[
\|\vecu \|_{\mL^2(D)}^2]\bigr)^{1/2}
\bigl(\mE
[\|\nabla\vecv \|_{\mL^2(D)}^2]\bigr)^{1/2}\nn\\
&\quad+
c( s^{1/2}+ s)\bigl(\mE[
\|\vecu \|_{\mL^2(D)}^2]\bigr)^{1/2}
\bigl(\mE[\|\vecv \|_{\mL^2(D)}^2
]\bigr)^{1/2}.
\end{align*}
Hence, there holds
\begin{align}\label{eq: Fbound14}
\int_{t_j}^{t_{j+1}} 
\mE
|\tilde{F}^j( s,\vecu ,\vecv )| \ds
&\leq 
c \int_{t_j}^{t_{j+1}}s  \bigl(\mE[
\|\vecu \|_{\mL^2(D)}^2]\bigr)^{1/2}
\bigl(\mE
[\|\nabla\vecv \|_{\mL^2(D)}^2]\bigr)^{1/2}\ds\nn\\
&\quad+
c\int_{t_j}^{t_{j+1}}( s^{1/2}+ s)\bigl(\mE[
\|\vecu \|_{\mL^2(D)}^2]\bigr)^{1/2}
\bigl(\mE[\|\vecv \|_{\mL^2(D)}^2
]\bigr)^{1/2}\ds,\nn\\
&\leq 
ck \int_{t_j}^{t_{j+1}}  \bigl(\mE[
\|\vecu \|_{\mL^2(D)}^2]\bigr)^{1/2}
\bigl(\mE
[\|\nabla\vecv \|_{\mL^2(D)}^2]\bigr)^{1/2}\ds\nn\\
&\quad+
c( k^{1/2}+ k)\int_{t_j}^{t_{j+1}}\bigl(\mE[
\|\vecu \|_{\mL^2(D)}^2]\bigr)^{1/2}
\bigl(\mE[\|\vecv \|_{\mL^2(D)}^2
]\bigr)^{1/2}\ds.
\end{align}
Therefore, it follows from~\eqref{eq: Fbound13} and~\eqref{eq: Fbound14} that 
\begin{align*}
\mE[
&\int_0^T 
|F(t,\vecu(t,\cdot),\vecv(t,\cdot))-F_k(t,\vecu(t,\cdot),\vecv(t,\cdot))|\dt\
]\nn\\
&\leq 
ck \int_0^T  \bigl(\mE[
\|\vecu \|_{\mL^2(D)}^2]\bigr)^{1/2}
\bigl(\mE
[\|\nabla\vecv \|_{\mL^2(D)}^2]\bigr)^{1/2}\ds\nn\\
&\quad+
c( k^{1/2}+ k)\int_0^T\bigl(\mE[
\|\vecu \|_{\mL^2(D)}^2]\bigr)^{1/2}
\bigl(\mE[\|\vecv \|_{\mL^2(D)}^2
]\bigr)^{1/2}\ds.
\end{align*}
The result follows immediately by using H\"older's inequality, which completes the proof of the lemma.
\end{proof}

The following two Lemmas~\ref{lem:3.7} and ~\ref{lem:3.7a} show that ~$\vecm_{h,k}^-$ and~$\vecm_{h,k}$, 
respectively, satisfy a discrete form of~\eqref{InE:13}.
\begin{lemma}\label{lem:3.7}
Assume that $h$ and $k$ approach $0$ with the following
conditions
\begin{equation}\label{equ:theta}
\begin{cases}
k = o(h^2) & \quad\text{when } 0 \le \theta < 1/2, \\
k = o(h) & \quad\text{when } \theta = 1/2, \\
\text{no condition} & \quad\text{when } 1/2<\theta\le1.
\end{cases}
\end{equation}
Then for any $\vecpsi \in C_0^\infty\big((0,T);\C^{\infty}(D)\big)$, there holds $\mP$-a.s.
\begin{align*}
&-\lambda_1\inpro{\vecm_{h,k}^-\times\vecv_{h,k}}
{\vecm_{h,k}^-\times\vecpsi}_{\mL^2(D_T)}
+
\lambda_2\inpro{\vecv_{h,k}}
{\vecm_{h,k}^-\times\vecpsi}_{\mL^2(D_T)}
\nonumber\\
&+
\mu\inpro{\nabla(\vecm_{h,k}^-+k\theta\vecv_{h,k})}
{\nabla(\vecm_{h,k}^-\times\vecpsi)}_{\mL^2(D_T)}
+
\mu\int_0^TF_k(t,\vecm_{h,k}^-,\vecm_{h,k}^-\times\vecpsi)\dt
=\sum_{j=1}^3 I_j,
\end{align*}
where
\begin{align*}
I_1
&:=
\inpro{-\lambda_1\vecm_{h,k}^-\times\vecv_{h,k}
+
\lambda_2\vecv_{h,k}}
{\vecm_{h,k}^-\times\vecpsi
-
I_{\mV_h}(\vecm_{h,k}^-\times\vecpsi)}_{\mL^2(D_T)},\\
I_2
&:=
\mu\inpro{\nabla(\vecm_{h,k}^-+k\theta\vecv_{h,k})}
{\nabla(\vecm_{h,k}^-\times\vecpsi-
I_{\mV_h}(\vecm_{h,k}^-\times\vecpsi))}_{\mL^2(D_T)},\\
I_3
&:=\mu
\int_0^T
F_k(t,\vecm_{h,k}^-,\vecm_{h,k}^-\times\vecpsi)
-
F_k(t,\vecm_{h,k}^-,I_{\mV_h}(\vecm_{h,k}^-\times\vecpsi))\dt.
\end{align*}
Furthermore, $\mE |I_i|=O(h)$ for $i=1,2,3$.
\end{lemma}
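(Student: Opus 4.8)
The plan is to derive the asserted identity by testing the one-step scheme~\eqref{E:1.5} against a suitable, $t$-dependent element of $\mW_h^{(j)}$ built from $\vecpsi$, integrating in time and summing over $j$; and then to bound each remainder $I_i$ by combining finite element interpolation estimates with the a priori bounds of Lemmas~\ref{lem:3.2} and~\ref{lem:3.2a} and the estimate~\eqref{eq: Fbound10}.

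First I would fix $j\in\{0,\dots,J-1\}$ and $t\in[t_j,t_{j+1})$ and observe that $I_{\mV_h}\bigl(\vecm_h^{(j)}\times\vecpsi(t,\cdot)\bigr)\in\mW_h^{(j)}$, since at each node $\vecx_n$ its value $\vecm_h^{(j)}(\vecx_n)\times\vecpsi(t,\vecx_n)$ is orthogonal to $\vecm_h^{(j)}(\vecx_n)$. Taking $\vecw_h^{(j)}=I_{\mV_h}\bigl(\vecm_h^{(j)}\times\vecpsi(t,\cdot)\bigr)$ in~\eqref{E:1.5} and substituting $\vecm_h^{(j)}\times\vecpsi(t,\cdot)=I_{\mV_h}\bigl(\vecm_h^{(j)}\times\vecpsi(t,\cdot)\bigr)+e(t)$, with $e(t):=\vecm_h^{(j)}\times\vecpsi(t,\cdot)-I_{\mV_h}\bigl(\vecm_h^{(j)}\times\vecpsi(t,\cdot)\bigr)$, into each of the three bilinear terms and into $F$ (using that $F(t_j,\cdot,\cdot)$ is linear in its last argument, cf.\ Remark~\ref{rem: Fsym}), one finds that the time-$t$ integrand $L_t$ of the left-hand side of the claimed identity satisfies
\[
L_t=\inpro{-\lambda_1\vecm_h^{(j)}\times\vecv_h^{(j)}+\lambda_2\vecv_h^{(j)}}{e(t)}_{\mL^2(D)}+\mu\inpro{\nabla(\vecm_h^{(j)}+k\theta\vecv_h^{(j)})}{\nabla e(t)}_{\mL^2(D)}+\mu F\bigl(t_j,\vecm_h^{(j)},e(t)\bigr).
\]
Integrating over $[t_j,t_{j+1})$, summing over $j$, and using Definition~\ref{def:mhk} turns the left side into the four stated terms and the right side into $I_1+I_2+I_3$; this establishes the identity, $\mP$-a.s.

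Next I would estimate $I_1$ and $I_3$. Since $\vecm_{h,k}^-$ is affine on every tetrahedron, no second derivative of it enters $\nabla^2(\vecm_{h,k}^-\times\vecpsi)$, so the standard element-wise interpolation bounds give, $\mP$-a.s., the $\mL^2$-estimate
\[
\norm{\vecm_{h,k}^-\times\vecpsi-I_{\mV_h}(\vecm_{h,k}^-\times\vecpsi)}{\mL^2(D_T)}\le c_{\vecpsi}\,h^{2}\bigl(\norm{\vecm_{h,k}^-}{\mL^2(D_T)}+\norm{\nabla\vecm_{h,k}^-}{\mL^2(D_T)}\bigr)
\]
and the $\mH^1$-seminorm estimate
\[
\norm{\nabla\bigl(\vecm_{h,k}^-\times\vecpsi-I_{\mV_h}(\vecm_{h,k}^-\times\vecpsi)\bigr)}{\mL^2(D_T)}\le c_{\vecpsi}\,h\bigl(\norm{\vecm_{h,k}^-}{\mL^2(D_T)}+\norm{\nabla\vecm_{h,k}^-}{\mL^2(D_T)}\bigr),
\]
where $c_{\vecpsi}$ depends only on $\vecpsi$ and its first two spatial derivatives. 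For $I_1$, the Cauchy--Schwarz inequality (in $\Omega$ and in $D_T$), the pointwise bound $\norm{\vecm_{h,k}^-}{\mL^\infty(D)}\le1$, the first display, and the estimates $\mE\norm{\vecv_{h,k}}{\mL^2(D_T)}^2\le c$ and $\mE\norm{\vecm_{h,k}^-}{\mH^1(D_T)}^2\le c$ from Lemma~\ref{lem:3.2a} give $\mE|I_1|=O(h^2)$. For $I_3$, the linearity of $F_k$ in its last argument yields $I_3=\mu\int_0^T F_k\bigl(t,\vecm_{h,k}^-,\vecm_{h,k}^-\times\vecpsi-I_{\mV_h}(\vecm_{h,k}^-\times\vecpsi)\bigr)\dt$; applying~\eqref{eq: Fbound10} with $F^*=F_k$ and then the two displays above together with Lemma~\ref{lem:3.2a} gives $\mE|I_3|=O(h)$.

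The main obstacle is $I_2$, because it involves $\nabla\vecv_{h,k}$, which is not controlled uniformly. Writing $I_2=\mu\inpro{\nabla\vecm_{h,k}^-}{\nabla e_{h,k}}_{\mL^2(D_T)}+\mu k\theta\inpro{\nabla\vecv_{h,k}}{\nabla e_{h,k}}_{\mL^2(D_T)}$ with $e_{h,k}:=\vecm_{h,k}^-\times\vecpsi-I_{\mV_h}(\vecm_{h,k}^-\times\vecpsi)$, the first term is $O(h)$ by the same Cauchy--Schwarz argument, the $\mH^1$-seminorm estimate, and $\mE\norm{\nabla\vecm_{h,k}^-}{\mL^2(D_T)}^2\le c$. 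For the second term I would split into the cases of~\eqref{equ:theta}. If $\theta>1/2$, Lemma~\ref{lem:3.2a} gives $k(2\theta-1)\,\mE\norm{\nabla\vecv_{h,k}}{\mL^2(D_T)}^2\le c$, hence $k\bigl(\mE\norm{\nabla\vecv_{h,k}}{\mL^2(D_T)}^2\bigr)^{1/2}\le c\,k^{1/2}$ and the term is $O(h\,k^{1/2})$, hence $O(h)$ since $k\to0$. If $\theta\le1/2$, the inverse estimate $\norm{\nabla\vecv_{h,k}}{\mL^2(D)}\le c\,h^{-1}\norm{\vecv_{h,k}}{\mL^2(D)}$ together with $\mE\norm{\vecv_{h,k}}{\mL^2(D_T)}^2\le c$ (valid by Lemma~\ref{lem:3.2a}, using $k=o(h^2)$ when $\theta<1/2$) gives $k\bigl(\mE\norm{\nabla\vecv_{h,k}}{\mL^2(D_T)}^2\bigr)^{1/2}\le c\,k\,h^{-1}$, so the term is $\le c\,k\,h^{-1}\cdot h=c\,k$, which is $o(h)$ because $k=o(h)$ when $\theta=1/2$ and $k=o(h^2)$ when $\theta<1/2$. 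Hence $\mE|I_2|=O(h)$ in every case, which completes the proof.
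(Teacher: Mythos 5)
Your proposal follows essentially the same route as the paper: test the scheme~\eqref{E:1.5} with the nodal interpolant $I_{\mV_h}\bigl(\vecm_{h,k}^-\times\vecpsi\bigr)$ (which lies in $\mW_h^{(j)}$ by nodal orthogonality), integrate in time and sum over $j$ to obtain the identity, and then bound $I_1,I_2,I_3$ via the interpolation estimate of Lemma~\ref{lem:Ih vh}, the a priori bounds of Lemma~\ref{lem:3.2a}, the linearity of $F_k$ with~\eqref{eq: Fbound10}, and the inverse estimate for the $k\theta\nabla\vecv_{h,k}$ contribution when $\theta\le 1/2$. Your treatment is correct; it merely spells out the $\theta$-case analysis for $I_2$ in more detail than the paper and notes the sharper $O(h^2)$ rate for $I_1$, neither of which changes the argument.
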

\begin{proof}
For $t\in[t_j, t_{j+1})$, we use
equation~\eqref{E:1.5} with
$\vecw_h^{(j)}=I_{\mV_h}\big(\vecm_{h,k}^-(t,\cdot)\times\vecpsi(t,\cdot)\big)$
to see
\begin{align*}
-\lambda_1
&\inpro{\vecm_{h,k}^-(t,\cdot)\times\vecv_{h,k}(t,\cdot)}
{I_{\mV_h}\big(\vecm_{h,k}^-(t,\cdot)\times\vecpsi(t,\cdot)\big)}_{\mL^2(D)} \\
&+
\lambda_2\inpro{\vecv_{h,k}(t,\cdot)}
{I_{\mV_h}\big(\vecm_{h,k}^-(t,\cdot)\times\vecpsi(t,\cdot)\big)}_{\mL^2(D)}\nonumber\\
&+
\mu\inpro{\nabla(\vecm_{h,k}^-(t,\cdot)+k\theta\vecv_{h,k}(t,\cdot))}
{\nabla
I_{\mV_h}\big(\vecm_{h,k}^-(t,\cdot)\times\vecpsi(t,\cdot)\big)}_{\mL^2(D)}\\
&+\mu
F_k(t,\vecm_{h,k}^-(t,\cdot),I_{\mV_h}\big(\vecm_{h,k}^-(t,\cdot)\times\vecpsi(t,\cdot)\big))
= 0.
\end{align*}
Integrating both sides of the above equation over
$(t_j,t_{j+1})$ and summing over $j=0,\ldots,J-1$ we deduce
\begin{align*}
-&\lambda_1
\inpro{\vecm_{h,k}^-\times\vecv_{h,k}}
{I_{\mV_h}\big(\vecm_{h,k}^-\times\vecpsi\big)}_{\mL^2(D_T)}
+
\lambda_2\inpro{\vecv_{h,k}}
{I_{\mV_h}\big(\vecm_{h,k}^-\times\vecpsi\big)}_{\mL^2(D_T)}\nonumber\\
&+
\mu\inpro{\nabla(\vecm_{h,k}^-+k\theta\vecv_{h,k})}
{\nabla I_{\mV_h}\big(\vecm_{h,k}^-\times\vecpsi\big)}_{\mL^2(D_T)}
+
\mu\int_0^T
F_k(t,\vecm_{h,k}^-,I_{\mV_h}(\vecm_{h,k}^-\times\vecpsi))\dt
= 0.
\end{align*}
This implies
\begin{align*}
-\lambda_1
&\inpro{\vecm_{h,k}^-\times\vecv_{h,k}}
{\vecm_{h,k}^-\times\vecpsi}_{\mL^2(D_T)}
+
\lambda_2\inpro{\vecv_{h,k}}
{\vecm_{h,k}^-\times\vecpsi}_{\mL^2(D_T)}\nonumber\\
&+
\mu\inpro{\nabla(\vecm_{h,k}^-+k\theta\vecv_{h,k})}
{\nabla(\vecm_{h,k}^-\times\vecpsi)}_{\mL^2(D_T)}
+
\mu\int_0^T
F_k(t,\vecm_{h,k}^-,\vecm_{h,k}^-\times\vecpsi)\dt\nn\\
&=I_1+I_2+I_3.
\end{align*}

Hence it suffices to prove that $\mE |I_i|=O(h)$ for $i=1,2,3$.
First, by using Lemma~\ref{lem:mhj} we obtain
\begin{equation}\label{eq: mhkinfty}
\norm{\vecm_{h,k}^-}{\mL^\infty(D_T)}
\le
\sup_{0\le j \le J} \norm{\vecm_h^{(j)}}{\mL^\infty(D)}
\leq
1,
\end{equation}
and 
\begin{equation}\label{eq: mhkinfty2}
\norm{\vecm_{h,k}}{\mL^\infty(D_T)}
\leq 2
\sup_{0\leq j \leq J} \norm{\vecm_h^{(j)}}{\mL^\infty(D)}
\leq
2.
\end{equation}
Lemma~\ref{lem:3.2a} and~\eqref{eq: mhkinfty} together with H\"older's
inequality and Lemma~\ref{lem:Ih vh} yield
\begin{align*}
\mE|I_1|
&\le
c\mE\left[
\left(\norm{\vecm_{h,k}^-}{\mL^\infty(D_T)}+1\right)
\norm{\vecv_{h,k}}{\mL^2(D_T)}
\norm{\vecm_{h,k}^-\times\vecpsi
-
I_{\mV_h}(\vecm_{h,k}^-\times\vecpsi)}{\mL^2(D_T)}\right] \\
&\le
c\mE\left[\norm{\vecv_{h,k}}{\mL^2(D_T)}
\norm{\vecm_{h,k}^-\times\vecpsi
-
I_{\mV_h}(\vecm_{h,k}^-\times\vecpsi)}{\mL^2(D_T)}\right] \\
&\leq
c\bigl(\mE[\norm{\vecv_{h,k}}{\mL^2(D_T)}^2]\bigr)^{1/2}
\bigl(\mE[\norm{\vecm_{h,k}^-\times\vecpsi
-
I_{\mV_h}(\vecm_{h,k}^-\times\vecpsi)}{\mL^2(D_T)}^2]\bigr)^{1/2}
\le
ch.
\end{align*}
The bound for $\mE|I_2|$ can be obtained similarly,
using Lemma~\ref{lem:3.2a} and noting that when $\theta\in[0,\frac{1}{2}]$, 
a suitable bound on $k\left\| \nabla \vecv_{h,k} \right\|_{\mL^2(D_T)}$ can be deduced from the inverse estimate as follows:
\[
k\left\| \nabla \vecv_{h,k} \right\|_{\mL^2(D_T)}
\leq
ckh^{-1}
\left \| \vecv_{h,k}\right\|_{\mL^2(D_T)}
\leq ckh^{-1}.
\]
The bound for  $\mE|I_3|$ can be obtained by noting the linearity of $F$ in Remark~\ref{rem: Fsym} 
and using Lemmas~\ref{lem: Fbound} and~\ref{lem:Ih vh}. Indeed, 
\begin{align*}
\mE|I_3|
&=
\mu\mE\bigl|
\int_0^T
F_k(t,\vecm_{h,k}^-,\vecm_{h,k}^-\times\vecpsi
-I_{\mV_h}(\vecm_{h,k}^-\times\vecpsi))\dt\bigr|\\
&\leq 
\mu\mE
\int_0^T\bigl|
F_k(t,\vecm_{h,k}^-,\vecm_{h,k}^-\times\vecpsi
-I_{\mV_h}(\vecm_{h,k}^-\times\vecpsi))\bigr|\dt\\
&\leq 
c
\bigl(\mE[
\|\vecm_{h,k}^-\|_{\mL^2(D_T)}^2]\bigr)^{1/2}
\bigg(
\bigl(\mE[\|\nabla\bigl(\vecm_{h,k}^-\times\vecpsi
-I_{\mV_h}(\vecm_{h,k}^-\times\vecpsi)\bigr)\|_{\mL^2(D_T)}^2
]\bigr)^{1/2}\\
&\quad\quad+
\bigl(\mE[\|\vecm_{h,k}^-\times\vecpsi
-I_{\mV_h}(\vecm_{h,k}^-\times\vecpsi)\|_{\mL^2(D_T)}^2
]\bigr)^{1/2} \bigg)\\
&\leq ch.
\end{align*}
This completes the proof of the lemma.
\end{proof}
\begin{lemma}\label{lem:3.7a}
Assume that $h$ and $k$ approach 0 satisfying~\eqref{equ:theta}.
Then for any $\vecpsi \in C_0^\infty\big((0,T);\C^{\infty}(D)\big)$, 
there holds $\mP$-a.s.
\begin{align}\label{InE:10}
&-\lambda_1
\inpro{\vecm_{h,k}\times\pa_t\vecm_{h,k}}
{\vecm_{h,k}\times\vecpsi}_{\mL^2(D_T)}
+
\lambda_2\inpro{\pa_t\vecm_{h,k}}
{\vecm_{h,k}\times\vecpsi}_{\mL^2(D_T)}
\nonumber\\
&+
\mu\inpro{\nabla(\vecm_{h,k})}
{\nabla(\vecm_{h,k}\times\vecpsi)}_{\mL^2(D_T)}
+
\mu\int_0^TF(t,\vecm_{h,k},\vecm_{h,k}\times\vecpsi)\dt
= \sum_{j=1}^7I_j,
\end{align}
where
\begin{align*}
I_4
&=
-\lambda_1\inpro{\vecm_{h,k}^-\times\vecv_{h,k}}
{\vecm_{h,k}^-\times\vecpsi}_{\mL^2(D_T)}
+
\lambda_1\inpro{\vecm_{h,k}\times\pa_t\vecm_{h,k}}
{\vecm_{h,k}\times\vecpsi}_{\mL^2(D_T)},\\
I_5
&=
\lambda_2\inpro{\vecv_{h,k}}
{\vecm_{h,k}^-\times\vecpsi}_{\mL^2(D_T)}
-
\lambda_2\inpro{\pa_t\vecm_{h,k}}
{\vecm_{h,k}\times\vecpsi}_{\mL^2(D_T)}
,\\
I_6
&=
\mu\inpro{\nabla(\vecm_{h,k}^-+k\theta\vecv_{h,k})}
{\nabla(\vecm_{h,k}^-\times\vecpsi)}_{\mL^2(D_T)}
-
\mu\inpro{\nabla(\vecm_{h,k})}
{\nabla(\vecm_{h,k}\times\vecpsi)}_{\mL^2(D_T)},
\\
I_7
&=\mu
\int_0^T
\bigl(
F(t,\vecm_{h,k},\vecm_{h,k}\times\vecpsi)
-
F_k(t,\vecm_{h,k}^-,\vecm_{h,k}^-\times\vecpsi)\bigr)\dt.
\end{align*}
Furthermore, $\mE|I_i|=O(h)$ for $i=1,\cdots,6$ and $\mE|I_7|=O(h+k^{1/2})$.
\end{lemma}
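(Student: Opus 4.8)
The plan is to derive~\eqref{InE:10} as an algebraic rearrangement of the conclusion of Lemma~\ref{lem:3.7} and then to estimate the four new remainder terms $I_4,I_5,I_6,I_7$; the bounds $\mE|I_i|=O(h)$ for $i=1,2,3$ are inherited verbatim from Lemma~\ref{lem:3.7}. Indeed, Lemma~\ref{lem:3.7} (obtained by testing~\eqref{E:1.5} with $\vecw_h^{(j)}=I_{\mV_h}(\vecm_{h,k}^-\times\vecpsi)$, integrating over each $[t_j,t_{j+1})$ and summing) says that the left-hand side of~\eqref{InE:10}, but with $\vecm_{h,k}$, $\pa_t\vecm_{h,k}$ and $F$ replaced by $\vecm_{h,k}^-$, $\vecv_{h,k}$ and $F_k$ and with $\nabla\vecm_{h,k}$ replaced by $\nabla(\vecm_{h,k}^-+k\theta\vecv_{h,k})$, equals $I_1+I_2+I_3$. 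Since $I_4,\dots,I_7$ are by construction precisely the term-by-term differences between the left-hand side of~\eqref{InE:10} and that modified left-hand side, adding them to both sides of the identity of Lemma~\ref{lem:3.7} reproduces~\eqref{InE:10} with right-hand side $\sum_{j=1}^7 I_j$. It thus remains to bound $\mE|I_i|$ for $i=4,5,6,7$.

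For $I_4$ and $I_5$ I would separate the two effects they encode by inserting intermediate quantities: first replace $\vecv_{h,k}$ by $\pa_t\vecm_{h,k}$, then $\vecm_{h,k}^-$ by $\vecm_{h,k}$. In the first step, pairing the small factor in $\mL^1(D_T)$ against the bounded factor in $\mL^\infty(D_T)$ and using $\mE\norm{\vecv_{h,k}-\pa_t\vecm_{h,k}}{\mL^1(D_T)}\le ck$ from Lemma~\ref{lem:3.4}, together with $\norm{\vecm_{h,k}^-}{\mL^\infty(D_T)}\le 1$ and $\norm{\vecm_{h,k}}{\mL^\infty(D_T)}\le 2$ from Lemma~\ref{lem:mhj}, gives an $O(k)$ bound. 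In the second step one uses $\mE\norm{\vecm_{h,k}-\vecm_{h,k}^-}{\mL^2(D_T)}^2\le ck^2$ from Lemma~\ref{lem:3.4}, the $\mL^2(D_T)$-bounds on $\vecv_{h,k}$ and $\pa_t\vecm_{h,k}$ from Lemmas~\ref{lem:3.2a} and~\ref{lem:3.4}, and the $\mL^\infty$-bounds, which again yields $O(k)$; under~\eqref{equ:theta} this is $o(h)$ when $\theta\le 1/2$ and is $O(k)\to 0$ otherwise.

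The term $I_6$ needs one extra observation: since $\pa_i\vecv\cdot(\pa_i\vecv\times\vecpsi)=0$ pointwise for each $i$, one has $\inpro{\nabla\vecv}{\nabla(\vecv\times\vecpsi)}_{\mL^2(D)}=\inpro{\nabla\vecv}{\vecv\times\nabla\vecpsi}_{\mL^2(D)}$ for every $\vecv\in\mH^1(D)$. Applying this with $\vecv=\vecm_{h,k}$ and $\vecv=\vecm_{h,k}^-$ rewrites the ``$\vecm_{h,k}^-$ versus $\vecm_{h,k}$'' part of $I_6$ as $\mu\inpro{\nabla(\vecm_{h,k}^--\vecm_{h,k})}{\vecm_{h,k}^-\times\nabla\vecpsi}_{\mL^2(D_T)}+\mu\inpro{\nabla\vecm_{h,k}}{(\vecm_{h,k}^--\vecm_{h,k})\times\nabla\vecpsi}_{\mL^2(D_T)}$, which needs no spatial integration by parts against $\vecpsi$ (the latter not being compactly supported in $D$). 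These are controlled by the $\mL^2$-bound on $\nabla\vecm_{h,k}$, by $\mE\norm{\vecm_{h,k}-\vecm_{h,k}^-}{\mL^2(D_T)}\le ck$, and by the gradient estimate $\mE\norm{\nabla(\vecm_{h,k}-\vecm_{h,k}^-)}{\mL^2(D_T)}^2\le ck^2\,\mE\norm{\nabla\vecv_{h,k}}{\mL^2(D_T)}^2$, which follows from the definition of the time interpolant together with the control of $\sum_i\norm{\nabla\vecv_h^{(i)}}{\mL^2(D)}^2$ in Lemma~\ref{lem:3.2}; by Lemma~\ref{lem:3.2a} (for $\theta>1/2$) or by the inverse estimate and the $\mL^2(D_T)$-bound on $\vecv_{h,k}$ (for $\theta\le 1/2$) this is $O(k)$ or $O(k^2h^{-2})$, hence $o(1)$ under~\eqref{equ:theta}. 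The leftover gradient contribution $\mu k\theta\inpro{\nabla\vecv_{h,k}}{\nabla(\vecm_{h,k}^-\times\vecpsi)}_{\mL^2(D_T)}$, bounded by $ck\,\mE\norm{\nabla\vecv_{h,k}}{\mL^2(D_T)}$, is handled exactly as the term $I_2$ of Lemma~\ref{lem:3.7}: via Lemma~\ref{lem:3.2a} when $\theta>1/2$ and via $k\norm{\nabla\vecv_{h,k}}{\mL^2(D_T)}\le ckh^{-1}\norm{\vecv_{h,k}}{\mL^2(D_T)}$ when $\theta\le 1/2$.

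Finally, I would split $I_7$ as $\mu\int_0^T(F-F_k)(t,\vecm_{h,k},\vecm_{h,k}\times\vecpsi)\dt$ plus $\mu\int_0^T\bigl[F_k(t,\vecm_{h,k},\vecm_{h,k}\times\vecpsi)-F_k(t,\vecm_{h,k}^-,\vecm_{h,k}^-\times\vecpsi)\bigr]\dt$. The first piece is $O(k^{1/2})$ by~\eqref{eq: Fbound12} of Lemma~\ref{lem: Fbound} together with the $\mL^\infty$- and $\mH^1(D_T)$-bounds on $\vecm_{h,k}$; this is the source of the $k^{1/2}$ in $O(h+k^{1/2})$. For the second piece, the bilinearity of $F$ (hence of $F_k$) in its last two arguments from Remark~\ref{rem: Fsym} lets me expand it into $F_k$ evaluated on $\vecm_{h,k}^-$ and on $\de:=\vecm_{h,k}-\vecm_{h,k}^-$ placed in the various slots, and~\eqref{eq: Fbound10} of Lemma~\ref{lem: Fbound} combined with $\mE\norm{\de}{\mL^2(D_T)}\le ck$ and the gradient bound on $\de$ from the previous paragraph shows each such term is $O(k)$, $O(k^{1/2})$ or $o(h)$ under~\eqref{equ:theta}. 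I expect the main obstacle to be this gradient bound $\mE\norm{\nabla(\vecm_{h,k}-\vecm_{h,k}^-)}{\mL^2(D_T)}^2\to 0$ (needed for $I_6$ and for the second part of $I_7$): it is not contained in the $\mH^1(D_T)$-bound on $\vecm_{h,k}$ and must be extracted from the nodal normalisation in Algorithm~\ref{Algo:1} together with Lemma~\ref{lem:3.2} and, when $\theta\le 1/2$, the inverse estimate, as in~\cite{BNT2016}; once this is available the rest is routine, the only care being to track which case of~\eqref{equ:theta} makes each remainder small.
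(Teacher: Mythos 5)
Your overall strategy coincides with the paper's: the identity \eqref{InE:10} is obtained by adding $I_4,\dots,I_7$ to the identity of Lemma~\ref{lem:3.7}, and $I_4,I_5$ are bounded exactly as you describe (triangle inequality, intermediate quantities, the $\mL^\infty$ bounds and Lemma~\ref{lem:3.4}). The one place where you genuinely diverge is $I_7$, and there the paper's route is markedly simpler than yours. In Lemma~\ref{lem: Fbound9}/\eqref{eq: Fbound10} the two arguments of $F$ play asymmetric roles: the first needs only $\mL^2$ control while the second needs $\mH^1$ control. The paper exploits the symmetry $F_k(t,\vecu,\vecv)=F_k(t,\vecv,\vecu)$ of Remark~\ref{rem: Fsym} (not just the additivity you invoke) so that after splitting
$F_k(t,\vecm_{h,k},\vecm_{h,k}\times\vecpsi)-F_k(t,\vecm_{h,k}^-,\vecm_{h,k}^-\times\vecpsi)$
into $F_k(t,\vecm_{h,k}-\vecm_{h,k}^-,\vecm_{h,k}\times\vecpsi)+F_k(t,\vecm_{h,k}^-,(\vecm_{h,k}-\vecm_{h,k}^-)\times\vecpsi)$, the second term can be flipped to place $(\vecm_{h,k}-\vecm_{h,k}^-)\times\vecpsi$ in the $\mL^2$ slot; both pieces are then $O(k)$ using only $\mE\norm{\vecm_{h,k}-\vecm_{h,k}^-}{\mL^2(D_T)}^2\le ck^2$ and the $\mH^1(D_T)$ bounds. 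In particular, for $I_7$ no estimate on $\nabla(\vecm_{h,k}-\vecm_{h,k}^-)$ is needed, whereas your version makes $I_7$ depend on exactly the gradient bound you single out as ``the main obstacle.'' You should adopt the swap; it removes that dependence entirely.

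For $I_6$ the difficulty you identify is real but is not resolved in the paper either: the authors only say the bound is obtained ``similarly'' to $I_4$, which hides the fact that the difference of the two gradient pairings produces a term containing $\nabla(\vecm_{h,k}-\vecm_{h,k}^-)$ that cannot be absorbed by the stated estimates of Lemma~\ref{lem:3.4}. Your plan --- use the pointwise orthogonality to reduce to $\inpro{\nabla\vecv}{\vecv\times\nabla\vecpsi}_{\mL^2(D)}$, bound the $k\theta\nabla\vecv_{h,k}$ contribution as for $I_2$, and extract a smallness estimate for $\nabla(\vecm_{h,k}-\vecm_{h,k}^-)$ from the nodal normalisation in Algorithm~\ref{Algo:1} (via the inverse estimate when $\theta\le 1/2$, via the $k^2(2\theta-1)\sum_i\mE\norm{\nabla\vecv_h^{(i)}}{\mL^2(D)}^2\le c$ bound when $\theta>1/2$) --- is a legitimate way to fill this gap; just note that the shortcut formula $\mE\norm{\nabla(\vecm_{h,k}-\vecm_{h,k}^-)}{\mL^2(D_T)}^2\le ck^2\,\mE\norm{\nabla\vecv_{h,k}}{\mL^2(D_T)}^2$ is not literally correct, since $\vecm_h^{(j+1)}-\vecm_h^{(j)}\ne k\vecv_h^{(j)}$ after the renormalisation step, as you yourself acknowledge. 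With the $F$-symmetry used for $I_7$, the rest of your argument is sound and matches the paper's.
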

\begin{proof}
From Lemma~\ref{lem:3.7} it follows that
\begin{align*}
-\lambda_1&\inpro{\vecm_{h,k}\times\pa_t\vecm_{h,k}}
{\vecm_{h,k}\times\vecpsi}_{\mL^2(D_T)}
+
\lambda_2\inpro{\pa_t\vecm_{h,k}}
{\vecm_{h,k}\times\vecpsi}_{\mL^2(D_T)}
\nonumber\\
&+
\mu\inpro{\nabla(\vecm_{h,k})}
{\nabla(\vecm_{h,k}\times\vecpsi)}_{\mL^2(D_T)}
+
\mu\int_0^TF_k(t,\vecm_{h,k},\vecm_{h,k}\times\vecpsi)\dt \nn\\
&=
I_1+\cdots+I_7.
\end{align*}

Hence it suffices to prove that $\mE|I_i|=O(h)$ for $i=4,\cdots,6$. First, by using the triangle inequality and 
H\"older's inequality, we obtain
\begin{align*}
\lambda_1^{-1}|I_4|
&\leq
\left|
\inpro{(\vecm_{h,k}^--\vecm_{h,k})\times\vecv_{h,k}}
{\vecm_{h,k}^-\times\vecpsi}_{\mL^2(D_T)}
\right|\\
&\quad+
\left|
\inpro{\vecm_{h,k}\times\vecv_{h,k}}
{(\vecm_{h,k}^--\vecm_{h,k})\times\vecpsi}_{\mL^2(D_T)}
\right|\\
&\quad+
\left|
\inpro{\vecm_{h,k}\times(\vecv_{h,k}-\pa_t\vecm_{h,k})}
{\vecm_{h,k}\times\vecpsi}_{\mL^2(D_T)}
\right|,\\
&\leq
2\norm{\vecm_{h,k}^--\vecm_{h,k}}{\mL^2(D_T)}
\norm{\vecv_{h,k}}{\mL^2(D_T)}
\bigl(\norm{\vecm_{h,k}^-}{\mL^{\infty}(D_T)}+\norm{\vecm_{h,k}}{\mL^{\infty}(D_T)}\bigr)
\norm{\vecpsi}{\mL^{\infty}(D_T)}\\
&\quad+
\norm{\vecv_{h,k}-\pa_t\vecm_{h,k}}{\mL^1(D_T)}
\norm{\vecm_{h,k}}{\mL^{\infty}(D_T)}^2
\norm{\vecpsi}{\mL^{\infty}(D_T)}.
\end{align*}
Therefore, the required bound on $\mE|I_4|$ can be obtained by 
using~\eqref{eq: mhkinfty},~\eqref{eq: mhkinfty2} and Lemmas~\ref{lem:3.2a},~\ref{lem:3.4}. 
The bounds on $\mE|I_5|$ and $\mE|I_6|$  can be obtaineded similarly. 

In order to prove the bound for $\mE|I_7|$, we first use the triangle 
inequality then Remark~\ref{rem: Fsym} and Lemma~\ref{lem: Fbound} to obtain
\begin{align*}
\mE|I_7|
&\leq 
\mE
\int_0^T
\left|
F(t,\vecm_{h,k},\vecm_{h,k}\times\vecpsi) - F_k(t,\vecm_{h,k},\vecm_{h,k}\times\vecpsi) \right|\dt\nn\\
&\quad+
\mE
\int_0^T
\left|
F_k(t,\vecm_{h,k}-\vecm_{h,k}^-,\vecm_{h,k}\times\vecpsi)\right|\dt
+
\mE
\int_0^T
\left|
F_k(t,\vecm_{h,k}^-,(\vecm_{h,k}-\vecm_{h,k}^-)\times\vecpsi)\right|\dt\\
&\leq
ck^{1/2}\bigl(\mE[
\|\vecm_{h,k}\|_{\mL^2(D_T)}^2]\bigr)^{1/2}
\bigg(
\bigl(\mE[\|\nabla\vecm_{h,k}\|_{\mL^2(D_T)}^2
]\bigr)^{1/2}
+
\bigl(\mE[\|\vecm_{h,k}\|_{\mL^2(D_T)}^2
]\bigr)^{1/2} \bigg)\nn\\
&\quad+
c\bigl(\mE[
\|\vecm_{h,k}-\vecm_{h,k}^-\|_{\mL^2(D_T)}^2]\bigr)^{1/2}
\bigg(
\bigl(\mE[\|\nabla\vecm_{h,k}\|_{\mL^2(D_T)}^2
]\bigr)^{1/2}
+
\bigl(\mE[\|\vecm_{h,k}\|_{\mL^2(D_T)}^2
]\bigr)^{1/2} \bigg)\\
&\leq c(h+k^{1/2}),
\end{align*}
in which~\eqref{equ:mhk h1} and~\eqref{equ:mhk mhkm} are used to obtain the last 
inequality. This completes the proof of the lemma.
\end{proof}
In order to prove the convergence of random variables $\vecm_{h,k}$, 
we first state a result of tightness for the family $\cL(\vecm_{h,k})$. 
We  then use the Skorohod theorem to define another probability space and an almost surely convergent 
sequence defined in this space whose limit is a weak martingale solution of equation~\eqref{E:1.3ab}. 
The proof of the following results are omitted since they are 
relatively simple modification of the proof of the corresponding results from~\cite{BNT2016}.
\begin{lemma}\label{lem:tig}
Assume that $h$ and $k$ approach $0$, and further that ~\eqref{equ:theta} holds.
Then the set of laws~$\{\cL(\vecm_{h,k})\}$ on the Banach space
$C\big([0,T];\mH^{-1}(D)\big)\cap L^2(0,T;\mL^2(D))$ is tight.
\end{lemma}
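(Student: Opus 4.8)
The plan is to establish tightness via a standard Aldous-type criterion, or more directly by exhibiting a compact embedding together with uniform moment bounds on the sequence $\{\vecm_{h,k}\}$ in an appropriately strong space. First I would recall the functional-analytic setup: by Lemma~\ref{lem:3.4} we have the uniform bound $\mE\norm{\vecm_{h,k}}{\mH^1(D_T)}^2\le c$, which controls both $\mE\norm{\vecm_{h,k}}{L^2(0,T;\mH^1(D))}^2$ and $\mE\norm{\pt\vecm_{h,k}}{\mL^2(D_T)}^2$. Combined with Lemma~\ref{lem:3.2a}, which gives $\mE\norm{\vecv_{h,k}}{\mL^2(D_T)}^2\le c$, and the relation $\mE\norm{\vecv_{h,k}-\pt\vecm_{h,k}}{\mL^1(D_T)}\le ck$ from~\eqref{equ:vhk mhk}, we obtain control of a (fractional) time-derivative of $\vecm_{h,k}$ as well.

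The key step is then to invoke a compactness lemma of Aubin--Lions--Simon type adapted to the stochastic setting: the embeddings $\mH^1(D)\hookrightarrow\hookrightarrow \mL^2(D)\hookrightarrow \mH^{-1}(D)$ are compact and continuous respectively, so the set
\[
\mathcal{B}:=\Bigl\{\vecu : \norm{\vecu}{L^2(0,T;\mH^1(D))}^2 + \norm{\vecu}{H^1(0,T;\mH^{-1}(D))}^2 \le R\Bigr\}
\]
is relatively compact in $C([0,T];\mH^{-1}(D))\cap L^2(0,T;\mL^2(D))$ for each $R$. Here I would use that $\pt\vecm_{h,k}$ is bounded in $\mL^2(D_T)\hookrightarrow L^2(0,T;\mH^{-1}(D))$ — in fact $\vecm_{h,k}$ is piecewise linear in time, so $\pt\vecm_{h,k}=\vecv_{h,k}$ on each subinterval modulo the renormalization correction in Step~3 of Algorithm~\ref{Algo:1}, and~\eqref{equ:vhk mhk} together with Lemma~\ref{lem:3.2a} handles this. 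Then for any $\eta>0$, Chebyshev's inequality gives $\mP(\vecm_{h,k}\notin\mathcal{B}_{R})\le c/R<\eta$ for $R$ large enough uniformly in $h,k$, which is precisely the statement that $\{\cL(\vecm_{h,k})\}$ is tight on $C([0,T];\mH^{-1}(D))\cap L^2(0,T;\mL^2(D))$.

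The main obstacle is verifying the time-regularity input with the correct norm, since $\vecm_{h,k}$ has a discontinuous time derivative only through the nodal normalization in Step~3, and one must check that this normalization does not destroy the $H^1(0,T;\mH^{-1}(D))$ bound; this is exactly the content of~\eqref{equ:vhk mhk} in Lemma~\ref{lem:3.4}, which asserts $\vecv_{h,k}$ and $\pt\vecm_{h,k}$ differ by $O(k)$ in $\mL^1(D_T)$, so the bound transfers. A secondary technical point is the precise form of the deterministic compactness embedding into $C([0,T];\mH^{-1}(D))$ rather than merely $L^2(0,T;\mH^{-1}(D))$; this requires the standard Aubin--Lions--Simon refinement (see e.g.~\cite{Johnson87} or the references used in~\cite{BNT2016}), using that $L^2(0,T;\mH^1(D))\cap H^1(0,T;\mH^{-1}(D))\hookrightarrow C([0,T];[\mH^1(D),\mH^{-1}(D)]_{1/2})\hookrightarrow\hookrightarrow C([0,T];\mH^{-1}(D))$. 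Since all the required a priori bounds are already collected in Lemmas~\ref{lem:3.2a} and~\ref{lem:3.4}, the proof amounts to assembling these ingredients, and I would simply cite the analogous argument in~\cite{BNT2016}, as the excerpt indicates.
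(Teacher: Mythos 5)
Your proposal is essentially the intended argument: the paper omits the proof and defers to the corresponding result in~\cite{BNT2016}, which is exactly the combination you describe of the uniform bound $\mE\norm{\vecm_{h,k}}{\mH^1(D_T)}^2\le c$ from Lemma~\ref{lem:3.4}, a deterministic Aubin--Lions--Simon compactness statement, and Chebyshev's inequality to convert the moment bound into tightness. One caveat: the final embedding in your chain, $C([0,T];\mL^2(D))\hookrightarrow\hookrightarrow C([0,T];\mH^{-1}(D))$, is false as a statement about bounded sets (e.g.\ $u_n(t)=\sin(nt)f$ is bounded in $C([0,T];\mL^2)$ but has no subsequence converging in $C([0,T];\mH^{-1})$, for lack of equicontinuity). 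The conclusion you need is nevertheless true for your set $\mathcal{B}$, because membership in $\mathcal{B}$ also supplies the H\"older-in-time estimate $\norm{\vecu(t)-\vecu(s)}{\mH^{-1}(D)}\le |t-s|^{1/2}\norm{\pa_t\vecu}{L^2(0,T;\mH^{-1}(D))}$, so equicontinuity in $\mH^{-1}(D)$ plus pointwise relative compactness (boundedness in $\mL^2(D)$ together with the compact embedding $\mL^2(D)\hookrightarrow\hookrightarrow\mH^{-1}(D)$) gives relative compactness in $C([0,T];\mH^{-1}(D))$ by Arzel\`a--Ascoli; equivalently, invoke Simon's theorem directly rather than factoring through the interpolation space. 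With that repair the proof is complete, and your observation that the nodal renormalisation is already absorbed into the $\mH^1(D_T)$ bound (so that~\eqref{equ:vhk mhk} is not even strictly needed here) is correct.
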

\begin{proposition}\label{pro:con}
Assume that $h$ and $k$ approach $0$, and further that ~\eqref{equ:theta} holds.
Then there exist:
\begin{enumerate}
\renewcommand{\labelenumi}{(\alph{enumi})}
\item
a probability space $(\Omega',\cF',\mP')$;
\item
a sequence $\{\vecm'_{h,k}\}$ of random variables
defined on $(\Omega',\cF',\mP')$ and taking values in $C\big([0,T];\mH^{-1}(D)\big)\cap L^2(0,T;\mL^2(D))$; and
\item
a random variable $\vecm'$ defined on
$(\Omega',\cF',\mP')$ and taking values in $C\big([0,T];\mH^{-1}(D)\big)\cap L^2(0,T;\mL^2(D))$,
\end{enumerate}
satisfying
\begin{enumerate}
\item\label{item:a}
$\cL(\vecm_{h,k}) = \cL(\vecm_{h,k}')$,
\item\label{item:b}
$\vecm_{h,k}'\goto\vecm'$ in $C\big([0,T];\mH^{-1}(D)\big)\cap L^2(0,T;\mL^2(D))$ strongly,
$\mP'$-a.s..
\end{enumerate}

Moreover, the sequence $\{\vecm'_{h,k}\}$ satisfies
\begin{align}
\mE[ \|\vecm_{h,k}'\|^2_{\mH(D_T)}]&\leq c\label{eq: m'1},\\
\mE[ \||\vecm_{h,k}'|-1\|^2_{\mL^2(D_T)}]&\leq c(h^2+k^2)\label{eq: m'2},\\
\|\vecm_{h,k}'\|^2_{\mL^{\infty}(D_T)}&\leq c\quad\mP'\text{-a.s.,}\label{eq: m'3}
\end{align}
here, $c$ is a positive constant only depending on $\{\vecg_i\}_{i=1,\cdots,q}$.
\end{proposition}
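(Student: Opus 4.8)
The plan is to obtain the proposition from the tightness statement of Lemma~\ref{lem:tig} by the classical Prokhorov--Skorohod argument, and then to carry the uniform estimates of Lemma~\ref{lem:3.4} over to the new probability space using invariance under equality of laws. Write $X := C\big([0,T];\mH^{-1}(D)\big)\cap L^2(0,T;\mL^2(D))$ for the Banach space appearing in Lemma~\ref{lem:tig}, normed by the sum of the two norms; this is a separable Banach space, hence a Polish space, so that both Prokhorov's theorem and the Skorohod representation theorem are available on it. I would fix any sequence $(h_n,k_n)\to 0$ obeying~\eqref{equ:theta}; by Lemma~\ref{lem:tig} the laws $\cL(\vecm_{h_n,k_n})$ form a tight family of Borel probability measures on $X$, so by Prokhorov I may pass to a subsequence (not relabelled) along which these laws converge weakly to some Borel probability measure $\nu$ on $X$.

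Next I would apply the Skorohod representation theorem on $X$ to produce a probability space $(\Omega',\cF',\mP')$, random variables $\vecm'_{h,k}$ with $\cL(\vecm'_{h,k})=\cL(\vecm_{h,k})$, and a random variable $\vecm'$ with $\cL(\vecm')=\nu$, all defined on $(\Omega',\cF',\mP')$ and taking values in $X$, such that $\vecm'_{h,k}\to\vecm'$ strongly in $X$, $\mP'$-a.s. This yields items (a)--(c) of the statement together with properties~\eqref{item:a} and~\eqref{item:b}. I note in passing that in the subsequent sections one in fact applies Skorohod jointly to the tuple $(\vecm_{h,k},\vecm_{h,k}^-,\vecv_{h,k},W)$, so that the primed analogues of $\vecm_{h,k}^-$, $\vecv_{h,k}$ and of the driving Wiener process are available and the discrete identity of Lemma~\ref{lem:3.7a} is inherited on $\Omega'$; that refinement is not needed for the present statement and does not affect any step here.

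It remains to establish~\eqref{eq: m'1}--\eqref{eq: m'3}. The key observation is that each of the three quantities $\|\cdot\|_{\mH^1(D_T)}^2$, $\||\cdot|-1\|_{\mL^2(D_T)}^2$ and $\|\cdot\|_{\mL^\infty(D_T)}$ defines a Borel measurable function on $X$: the second is even continuous on $L^2(0,T;\mL^2(D))$ since $x\mapsto|x|$ is $1$-Lipschitz there; the sublevel sets of the third are closed in $L^2(0,T;\mL^2(D))$; and the first, though not continuous for the topology of $X$, is lower semicontinuous on $L^2(0,T;\mL^2(D))$, being the sum of the continuous map $x\mapsto\|x\|_{\mL^2(D_T)}^2$ and a supremum of $L^2$-continuous linear functionals of $x$. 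Since $\vecm'_{h,k}$ and $\vecm_{h,k}$ have the same law on $X$, the $\mP'$-expectation (respectively the $\mP'$-essential supremum) of each of these functionals of $\vecm'_{h,k}$ equals the corresponding quantity for $\vecm_{h,k}$; hence~\eqref{eq: m'1} follows from~\eqref{equ:mhk h1}, \eqref{eq: m'2} from~\eqref{equ:mhk 1}, and~\eqref{eq: m'3} from~\eqref{eq: mhkinfty2}, with the same constants, and in particular $\vecm'_{h,k}\in\mH^1(D_T)$ $\mP'$-almost surely.

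The only point requiring care is precisely this last measurability issue: the $\mH^1(D_T)$-norm is invisible to the topology of $X$ in which convergence is obtained, so one must argue separately, via lower semicontinuity, that it still yields a Borel functional on $X$ whose integral is preserved under equality of laws; I would present estimates one wishes to transfer from $\vecm_{h,k}$ to $\vecm'_{h,k}$ systematically in the form of expectations of Borel functionals of the path, which for~\eqref{eq: m'1}--\eqref{eq: m'3} they are. Everything else is a routine application of the Prokhorov and Skorohod theorems, carried out exactly as in the corresponding result of~\cite{BNT2016}; I expect no other obstacle.
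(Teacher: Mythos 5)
Your proposal is correct and is exactly the argument the paper intends: the proof is omitted here with a reference to \cite{BNT2016}, where the same Prokhorov--Skorohod construction on $C\big([0,T];\mH^{-1}(D)\big)\cap L^2(0,T;\mL^2(D))$ is carried out and the a priori bounds are transferred via equality of laws. Your explicit attention to the Borel measurability (via lower semicontinuity) of the $\mH^1(D_T)$-functional, and your remark that Skorohod should really be applied to the joint law of $(\vecm_{h,k},\vecm_{h,k}^-,\vecv_{h,k},W)$ for later use, are both points the paper glosses over.
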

We are now ready to state and prove our main theorem.
\begin{theorem}\label{the:mai}
Assume that $T>0$, $\vecM_0\in\mH^1(D)$ satisfies~\eqref{equ:m0} and $\vecg_i\in\mW^{2,\infty}(D)$ for 
$i=1,\cdots,q$ satisfy the homogeneous Neumann boundary condition.  
Then 
$\vecm'$, the sequence $\{\vecm'_{h,k}\}$ and the probability space $(\Omega',\cF',\mP')$  given by Proposition~\ref{pro:con} satisfy:
\begin{enumerate}
\item
the sequence of $\{\vecm'_{h,k}\}$ converges to $\vecm'$  weakly in $L^2(\Omega';\mH^1(D_T))$; and
\item
$\big(\Omega',\cF',(\cF'_t)_{t\in[0,T]},\mP',\vecM'\big)$ is a weak martingale solution of~\eqref{E:1.1},
where 
\[
\vecM'(t):=Z_t\vecm'(t)\quad \forall t\in[0,T],\text{ a.e. } \vecx\in D.
\] 
\end{enumerate}
\end{theorem}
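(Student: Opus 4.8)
The plan is to extract both statements from the uniform estimates of Lemmas~\ref{lem:3.2}--\ref{lem:3.4} and Proposition~\ref{pro:con}, and then to conclude by means of Lemma~\ref{lem:equi}. Throughout, $\mE'$ is expectation under $\mP'$, and $W'$, $Z'_t$, $F'$ denote, respectively, the $q$-dimensional Wiener process that the Skorohod representation underlying Proposition~\ref{pro:con} furnishes together with $\vecm'_{h,k}$ (with $\cL(\vecm_{h,k},W)=\cL(\vecm'_{h,k},W')$), the operator process~\eqref{auE:1} driven by $W'$, and the bilinear form of Lemma~\ref{lem: gradZ2} built from $W'$; because of this equality of joint laws, the pathwise identities and the bounds $\mE'|I'_j|=\mE|I_j|$ of Lemma~\ref{lem:3.7a} carry over to the primed objects.

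\emph{Weak $\mH^1$-convergence, regularity and constraint.} By~\eqref{eq: m'1}, $\{\vecm'_{h,k}\}$ is bounded, hence weakly precompact, in the reflexive space $L^2(\Omega';\mH^1(D_T))$. By Proposition~\ref{pro:con} the convergence $\vecm'_{h,k}\to\vecm'$ holds strongly in $L^2(0,T;\mL^2(D))$, $\mP'$-a.s., and since $\|\vecm'_{h,k}\|_{\mL^\infty(D_T)}\le c$ $\mP'$-a.s.\ by~\eqref{eq: m'3}, dominated convergence promotes this to strong --- a fortiori weak --- convergence in $L^2(\Omega';\mL^2(D_T))$. As any weak limit in $L^2(\Omega';\mH^1(D_T))$ is also a weak limit in $L^2(\Omega';\mL^2(D_T))$, every weakly convergent subsequence of $\{\vecm'_{h,k}\}$ has limit $\vecm'$, whence the whole sequence converges weakly to $\vecm'$ in $L^2(\Omega';\mH^1(D_T))$; this proves part~(1) and shows in particular that $\vecm'(\cdot,\omega)\in H^1(0,T;\mL^2(D))\cap L^2(0,T;\mH^1(D))$ for $\mP'$-a.e.\ $\omega$. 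Moreover, from~\eqref{eq: m'2} and the a.s.\ strong $\mL^2(D_T)$-convergence of a subsequence one gets $|\vecm'(t,x)|=1$ for a.e.\ $(t,x)$, $\mP'$-a.s.; by Lemma~\ref{lem:m 1} the same holds for $\vecM'=Z'_t\vecm'$.

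\emph{Passage to the limit.} By Lemma~\ref{lem:equi} (applied on $(\Omega',\cF',(\cF'_t),\mP',W')$) it remains to verify that $\vecm'$ satisfies~\eqref{InE:13}, with $Z$ replaced by $Z'$, for all $\vecvarphi\in L^2(0,T;\mH^1(D))$; by the continuity of both sides in $\vecvarphi$ and density it suffices to treat $\vecvarphi=\vecpsi\in C_0^\infty\big((0,T);\C^\infty(D)\big)$. I start from the transferred identity of Lemma~\ref{lem:3.7a} for $\vecm'_{h,k}$, whose right-hand side $\sum_{j=1}^7 I'_j$ satisfies $\mE'\big|\sum_j I'_j\big|=O(h+k^{1/2})$, and let $h,k\to0$ subject to~\eqref{equ:theta}, passing to the limit in $L^1(\Omega')$. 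Using $\pa_t\vecm'_{h,k}\rightharpoonup\pa_t\vecm'$ and $\nabla\vecm'_{h,k}\rightharpoonup\nabla\vecm'$ weakly in $L^2(\Omega';\mL^2(D_T))$ together with $\vecm'_{h,k}\to\vecm'$ strongly in $L^2(\Omega';\mL^2(D_T))$ and the uniform $\mL^\infty$-bound, the first three terms pass to the limit (all products being of weak--strong type, e.g.\ $(\vecm'_{h,k}\times\vecpsi)\times\vecm'_{h,k}\to(\vecm'\times\vecpsi)\times\vecm'$ strongly in $L^2(\Omega';\mL^2(D_T))$); here the pointwise orthogonality of $(\pa_i\vecm'_{h,k})\times\vecpsi$ to $\pa_i\vecm'_{h,k}$ reduces the Dirichlet term to $\inpro{\nabla\vecm'_{h,k}}{\vecm'_{h,k}\times\nabla\vecpsi}_{\mL^2(D_T)}$. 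For the term $\int_0^T F'(t,\vecm'_{h,k},\vecm'_{h,k}\times\vecpsi)\dt$ I write $\vecm'_{h,k}=\vecm'+\delta_{h,k}$ with $\delta_{h,k}\to0$ in $L^2(\Omega';\mL^2(D_T))$, expand $F'$ bilinearly, and bound the two error contributions by Lemma~\ref{lem: Fbound}: owing to the symmetry in Remark~\ref{rem: Fsym} I can always keep the gradient on the factor that stays bounded in $L^2(\Omega';\mH^1(D_T))$ and only the $\mL^2(D_T)$-norm on $\delta_{h,k}$, so that this term converges to $\int_0^T F'(t,\vecm',\vecm'\times\vecpsi)\dt$. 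The limit identity thus reads
\begin{align*}
&-\lambda_1\inpro{\vecm'\times\pa_t\vecm'}{\vecm'\times\vecpsi}_{\mL^2(D_T)}
+\lambda_2\inpro{\pa_t\vecm'}{\vecm'\times\vecpsi}_{\mL^2(D_T)}\\
&\qquad+\mu\inpro{\nabla\vecm'}{\nabla(\vecm'\times\vecpsi)}_{\mL^2(D_T)}
+\mu\int_0^T F'(t,\vecm',\vecm'\times\vecpsi)\dt=0,\quad\mP'\text{-a.s.},
\end{align*}
first for each fixed $\vecpsi$ and then simultaneously for all $\vecpsi$ via a countable dense family and continuity. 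By Lemma~\ref{lem: gradZ2} the last two terms combine into $\mu\int_0^T\inpro{\nabla Z'_s\vecm'}{\nabla Z'_s(\vecm'\times\vecpsi)}_{\mL^2(D)}\ds$; invoking $|\vecm'|=1$ together with the vector identities~\eqref{equ:elemabc}--\eqref{equ:elemabc2}, the two cross-product terms collapse to $\lambda_1\inpro{\pa_t\vecm'}{\vecpsi}_{\mL^2(D_T)}+\lambda_2\inpro{\vecm'\times\pa_t\vecm'}{\vecpsi}_{\mL^2(D_T)}$, which is exactly~\eqref{InE:13}. Extending to all $\vecvarphi\in L^2(0,T;\mH^1(D))$ by density and applying Lemma~\ref{lem:equi} shows that $\vecM'=Z'_t\vecm'$ is a weak martingale solution of~\eqref{E:1.1} on $\big(\Omega',\cF',(\cF'_t)_{t\in[0,T]},\mP'\big)$ with the Wiener process $W'$, which is part~(2).

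\emph{Main obstacle.} The crux is the limit passage in the genuinely stochastic term $\int_0^T F'(t,\vecm'_{h,k},\vecm'_{h,k}\times\vecpsi)\dt$: it has no counterpart in the deterministic theory, and for $q>1$ the form $F$ does not degenerate the way it effectively does when $q=1$, so it must be controlled through its bilinearity and symmetry and the tailored estimates of Lemmas~\ref{lem: Fbound9}--\ref{lem: Fbound}; a subordinate difficulty is that the terms containing $\pa_t\vecm'_{h,k}$ demand precisely the weak--strong pairing delivered by the $\mL^\infty$-bound~\eqref{eq: m'3} and the Skorohod strong convergence.
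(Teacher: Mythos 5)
Your proposal is correct and follows essentially the same route as the paper: part (1) via the uniform $\mH^1$ bound \eqref{eq: m'1} combined with the strong $L^2(\Omega';\mL^2(D_T))$ identification of the limit, the constraint $|\vecm'|=1$ from \eqref{eq: m'2}, and part (2) by passing to the limit in the discrete identity of Lemma~\ref{lem:3.7a} (weak--strong pairings for the $\partial_t$ and gradient terms, bilinearity plus Lemma~\ref{lem: Fbound} and Remark~\ref{rem: Fsym} for the $F$-term) before concluding with Lemma~\ref{lem:equi}. The only differences are presentational --- you are somewhat more explicit than the paper about transferring the Wiener process under the Skorohod representation and about the density/countable-family step --- and these do not change the argument.
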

\begin{proof}
From~\eqref{eq: m'3} and property (2) of Proposition~\ref{pro:con}, there exists a set $V\subset \Omega'$ 
such that $\mP'(V) = 1$ and for all $\omega'\in V$ there hold
\[
\|\vecm_{h,k}'(\omega')\|^2_{\mL^2(D_T)}\leq c
\quad\text{and}\quad
\vecm_{h,k}'(\omega')\goto\vecm'(\omega') \text{ in } \mL^2(D_T) \text{ strongly}.
\]
Hence, by using Lebesgue's dominated convergence theorem, we deduce
\begin{equation}\label{eq: m'4}
\vecm_{h,k}'\goto\vecm' \text{ in } L^2(\Omega';\mL^2(D_T)) \text{ strongly},
\end{equation}
which implies from~\eqref{eq: m'1} that 
\begin{equation}\label{eq: m'5}
\vecm_{h,k}'\goto\vecm' \text{ in } L^2(\Omega';\mH^1(D_T)) \text{ weakly}.
\end{equation}

In order to prove Part (2), by noting Lemma~\ref{lem:equi} and Remark~\ref{rem:LLL} we only need to prove that 
$\vecm'$ satisfies~\eqref{equ:m 1} and~\eqref{E:1.3ab}, namely
\begin{equation}\label{eq: m'6}
|\vecm'(t,\vecx)| = 1,\quad t\in (0,T),\quad \vecx\in D,\quad \mP'\text{-a.s.}
\end{equation}
and 
\begin{equation}\label{eq: m'7}
\cI(\vecm',\vecvarphi) = 0\quad \mP'\text{-a.s.}\quad \forall \vecvarphi\in L^2(0,T;\mH^1(D)),
\end{equation}
where 
\begin{align*}
\cI(\vecm',\vecvarphi)
:=
&\lambda_1\inpro{\vecm'\times\partial_t\vecm'}{\vecm'\times\vecvarphi}_{\mL^2(D_T)}
-
\lambda_2\inpro{\partial_t\vecm'}{\vecm'\times\vecvarphi}_{\mL^2(D_T)} \nn\\
&-
\mu \inpro{\nabla\vecm'}{\nabla(\vecm'\times\vecvarphi)}_{\mL^2(D_T)}
-
\mu \int_0^T F(t,\vecm'(t,\cdot),\vecm'(t,\cdot)\times\vecvarphi(t,\cdot))\dt.
\end{align*}
By using~\eqref{eq: m'2} and~\eqref{eq: m'4}, we obtain~\eqref{eq: m'6} immediately.

In order to prove~\eqref{eq: m'7}, we first find the equation satisfied by $\vecm_{h,k}'$ and then pass to 
the limit when $h$ and $k$ approach $0$.

By using Lemmas~\ref{lem:3.7a} and property (1) of Proposition~\ref{pro:con}, it follows that 
for any $\vecpsi\in C^{\infty}_0(0,T;\mathbb C^{\infty}(D))$ that there holds
\begin{equation}\label{eq: m'8}
\mE|\cI(\vecm_{h,k}',\vecpsi)| = O(h+k^{1/2}). 
\end{equation}
To pass to the limit in~\eqref{eq: m'8}, we first using~\eqref{eq: m'4}--\eqref{eq: m'6} and 
the same arguments as in~\cite[Theorem 6.8]{BNT2016} to obtain
that as $h$ and $k$ tend to $0$,
\begin{align}
\inpro{\vecm_{h,k}'\times\partial_t\vecm_{h,k}'}{\vecm_{h,k}'\times\vecvarphi}_{L^2(\Omega';\mL^2(D_T))}
&\goto 
\inpro{\vecm'\times\partial_t\vecm'}{\vecm'\times\vecvarphi}_{L^2(\Omega';\mL^2(D_T))},\label{eq: m'9}\\
\inpro{\partial_t\vecm'_{h,k}}{\vecm'_{h,k}\times\vecvarphi}_{L^2(\Omega';\mL^2(D_T))}
&\goto
\inpro{\partial_t\vecm'}{\vecm'\times\vecvarphi}_{L^2(\Omega';\mL^2(D_T))},\label{eq: m'10}\\
\inpro{\nabla\vecm'_{h,k}}{\nabla(\vecm'_{h,k}\times\vecvarphi)}_{L^2(\Omega';\mL^2(D_T))}
&\goto
\inpro{\nabla\vecm'}{\nabla(\vecm'\times\vecvarphi)}_{L^2(\Omega';\mL^2(D_T))}.\label{eq: m'11}
\end{align}
Then, by using Remark~\ref{rem: Fsym} and~\eqref{eq: Fbound10} with $F^*=F$, we estimate
\begin{align*}
\mE\int_0^T\bigl|
&F(t,\vecm_{h,k}'(t,\cdot),\vecm_{h,k}'(t,\cdot)\times\vecvarphi(t,\cdot))
-
F(t,\vecm'(t,\cdot),\vecm'(t,\cdot)\times\vecvarphi(t,\cdot))
\bigr|\dt\nn\\
&\leq 
\mE\int_0^T\bigl|
F(t,\vecm_{h,k}'(t,\cdot)-\vecm'(t,\cdot),\vecm_{h,k}'(t,\cdot)\times\vecvarphi(t,\cdot))\bigr|\dt\nn\\
&\quad+
\mE\int_0^T\bigl|
F(t,\bigl(\vecm_{h,k}'(t,\cdot)-\vecm'(t,\cdot)\bigr)\times\vecvarphi(t,\cdot),\vecm'(t,\cdot))\bigr|\dt\nn\\
&\leq 
c 
\|\vecm_{h,k}'-\vecm'\|_{L^2(\Omega';\mL^2(D_T))}
\bigl(
\|\nabla\vecm_{h,k}'\|_{L^2(\Omega';\mL^2(D_T))}
+
\|\vecm_{h,k}'\|_{L^2(\Omega';\mL^2(D_T))}\nn\\
&\quad\quad+
\|\nabla\vecm'\|_{L^2(\Omega';\mL^2(D_T))}
+
\|\vecm'\|_{L^2(\Omega';\mL^2(D_T))}
\bigr).
\end{align*}
Since $\vecm'\in L^2(\Omega';\mH^1(D_T))$, it follows from~\eqref{eq: m'1} and~\eqref{eq: m'4} that 
\begin{equation}\label{eq: m'12}
\mE\bigl[
\int_0^T F(t,\vecm_{h,k}'(t,\cdot),\vecm_{h,k}'(t,\cdot)\times\vecvarphi(t,\cdot))\dt
\bigr]
\goto
\mE\bigl[
\int_0^T F(t,\vecm'(t,\cdot),\vecm'(t,\cdot)\times\vecvarphi(t,\cdot))\dt
\bigr], 
\end{equation}
as $h$ and $k$ tend to $0$. From~\eqref{eq: m'9}--\eqref{eq: m'12} we deduce that
\[
\mE |\cI(\vecm_{h,k}',\vecpsi) - \cI(\vecm',\vecpsi)|\goto 0,
\]
and hence, together with~\eqref{eq: m'8} $\mE |\cI(\vecm',\vecpsi)| =  0$. This implies~\eqref{eq: m'7} which
completes the proof of our main theorem.
\end{proof}

\section{Appendix}\label{sec:app}
For the reader's convenience we will recall the following results, 
which are proved in~\cite{BNT2016}.
\begin{lemma}\label{lem:4.0}
For any real constants $\lambda_1$ and $\lambda_2$ with
$\lambda_1\not=0$, if $\vecpsi, \veczeta\in\R^3$
satisfy $|\veczeta|=1$, then there exists
$\vecvarphi\in\R^3$ satisfying
\begin{equation}\label{equ:app}
\lambda_1\vecvarphi
+
\lambda_2\vecvarphi\times\veczeta
=\vecpsi.
\end{equation}
As a consequence, if
$\veczeta\in\mH^1(D_T)$ with $|\veczeta(t,x)|=1$ a.e. in
$D_T$ and $\vecpsi\in L^2(0,T;\mW^{1,\infty}(D))$, then
$\vecvarphi\in L^2(0,T;\mH^1(D))$.
\end{lemma}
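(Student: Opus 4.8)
The plan is to regard \eqref{equ:app} as a linear system for the unknown $\vecvarphi\in\R^3$ with $\veczeta$ held fixed, and to solve it explicitly. Since $\lambda_1\neq0$ one expects the linear map $\vecvarphi\mapsto\lambda_1\vecvarphi+\lambda_2\,\vecvarphi\times\veczeta$ to be invertible; note that if $\lambda_1=0$ the map $\vecvarphi\mapsto\lambda_2\,\vecvarphi\times\veczeta$ annihilates the direction $\veczeta$, so the hypothesis $\lambda_1\neq0$ is indispensable. To produce the inverse I would first take the scalar product of \eqref{equ:app} with $\veczeta$: since $(\vecvarphi\times\veczeta)\cdot\veczeta=0$ this gives $\vecvarphi\cdot\veczeta=\lambda_1^{-1}\,\vecpsi\cdot\veczeta$. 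I would then take the cross product of \eqref{equ:app} with $\veczeta$ and use \eqref{equ:elemabc} together with $|\veczeta|=1$ to rewrite $(\vecvarphi\times\veczeta)\times\veczeta=(\vecvarphi\cdot\veczeta)\veczeta-\vecvarphi$; taking $\lambda_1$ times \eqref{equ:app} minus $\lambda_2$ times the resulting identity eliminates the term $\vecvarphi\times\veczeta$, and substituting $\vecvarphi\cdot\veczeta=\lambda_1^{-1}\vecpsi\cdot\veczeta$ then yields
\[
\vecvarphi=\frac{1}{\lambda_1^2+\lambda_2^2}\Bigl(\lambda_1\vecpsi-\lambda_2\,\vecpsi\times\veczeta+\frac{\lambda_2^2}{\lambda_1}\,(\vecpsi\cdot\veczeta)\,\veczeta\Bigr).
\]
A direct substitution, again using \eqref{equ:elemabc} and $|\veczeta|=1$, confirms that this $\vecvarphi$ solves \eqref{equ:app}, which proves the existence claim (the derivation also shows the solution is unique).

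For the second assertion I would read the displayed formula pointwise in $(t,\vecx)\in D_T$: given $\veczeta\in\mH^1(D_T)$ with $|\veczeta(t,\vecx)|=1$ a.e.\ and $\vecpsi\in L^2(0,T;\mW^{1,\infty}(D))$, I define $\vecvarphi(t,\vecx)$ by inserting $\vecpsi(t,\vecx)$ and $\veczeta(t,\vecx)$ into the formula. The right-hand side is linear in $\vecpsi$ and polynomial of degree at most $2$ in $\veczeta$, and $\veczeta\in\mL^\infty$ because $|\veczeta|=1$; hence the Leibniz rule for the product of a $\mW^{1,\infty}(D)$-function with an $\mH^1(D)$-function shows that, for a.e.\ $t$, $\vecvarphi(t,\cdot)\in\mH^1(D)$ with
\[
\norm{\vecvarphi(t,\cdot)}{\mH^1(D)}\le c\,\norm{\vecpsi(t,\cdot)}{\mW^{1,\infty}(D)}\bigl(1+\norm{\nabla\veczeta(t,\cdot)}{\mL^2(D)}\bigr),
\]
where $c$ depends only on $\lambda_1$, $\lambda_2$ and $|D|$. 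Squaring, integrating in $t$, and using $\vecpsi\in L^2(0,T;\mW^{1,\infty}(D))$ together with the regularity of $\veczeta$, I then obtain $\vecvarphi\in L^2(0,T;\mH^1(D))$.

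The argument is elementary algebra followed by a standard product estimate, so I expect no serious difficulty; the only step meriting attention is the last one, namely verifying that the pointwise $\mH^1(D)$-bound on $\vecvarphi$ is square-integrable over $(0,T)$, which is where the time-regularity of $\veczeta$ is used. In every situation where the lemma is applied $\veczeta=\vecm$ additionally satisfies $\esssup_{t}\norm{\nabla\vecm(t)}{\mL^2(D)}<\infty$, which makes this step immediate.
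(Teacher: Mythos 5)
Your proof is correct and is essentially the argument the paper relies on: the paper gives no proof here but defers to \cite{BNT2016}, where the same explicit inverse $\vecvarphi=\mu^{-1}\bigl(\lambda_1\vecpsi-\lambda_2\,\vecpsi\times\veczeta+\lambda_1^{-1}\lambda_2^2(\vecpsi\cdot\veczeta)\veczeta\bigr)$ with $\mu=\lambda_1^2+\lambda_2^2$ is obtained by exactly the dot-product and cross-product manipulations you describe, and your verification by direct substitution is sound. The integrability caveat you flag at the end is a genuine (minor) imprecision in the lemma's statement rather than a defect of your argument --- $\norm{\vecpsi(t,\cdot)}{\mW^{1,\infty}(D)}^2\norm{\nabla\veczeta(t,\cdot)}{\mL^2(D)}^2$ need not be integrable in $t$ under the stated hypotheses alone --- and your observation that in every application $\veczeta=\vecm$ satisfies $\esssup_{t}\norm{\nabla\vecm(t)}{\mL^2(D)}<\infty$ is the right way to close it.
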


\begin{lemma}\label{lem:Ih vh}
For any $\vecv\in\C(D)$, $\vecv_h\in\mV_h$ and
$\vecpsi\in\C_0^\infty(D_T)$,
\begin{align*}
\norm{I_{\mV_h}\vecv}{\mL^{\infty}(D)}
&\le
\norm{\vecv}{\mL^{\infty}(D)}, \\
\norm{\vecm_{h,k}^-\times\vecpsi
-
I_{\mV_h}(\vecm_{h,k}^-\times\vecpsi)}{\mL([0,T],\mH^1(D))}^2
&\le
ch^2
\norm{\vecm_{h,k}^-}{\mL([0,T],\mH^1(D))}^2
\norm{\vecpsi}{\mW^{2,\infty}(D_T)}^2,
\end{align*}
where $\vecm_{h,k}^-$ is  defined in Defintion~\ref{def:mhk}
\end{lemma}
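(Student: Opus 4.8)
The plan is to treat the two inequalities separately, both resting on elementary and standard properties of the nodal Lagrange interpolation operator $I_{\mV_h}$ onto the continuous piecewise-linear space $\mV_h$. For the first inequality I would exploit that the scalar hat functions $\{\phi_n\}_{n=1}^N$ form a nonnegative partition of unity on $\overline D$, that is $\phi_n\ge 0$ and $\sum_{n=1}^N\phi_n\equiv 1$ (on each tetrahedron they coincide with the barycentric coordinates). Since $I_{\mV_h}(\vecv)(\vecx)=\sum_{n=1}^N\vecv(\vecx_n)\phi_n(\vecx)$ is then a convex combination of the nodal values $\vecv(\vecx_n)\in\R^3$, the triangle inequality gives
\[
|I_{\mV_h}(\vecv)(\vecx)|
\le
\sum_{n=1}^N|\vecv(\vecx_n)|\,\phi_n(\vecx)
\le
\norm{\vecv}{\mL^{\infty}(D)}\sum_{n=1}^N\phi_n(\vecx)
=
\norm{\vecv}{\mL^{\infty}(D)},
\]
and taking the supremum over $\vecx\in D$ yields the first bound.

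For the second inequality I would set $\vecw:=\vecm_{h,k}^-\times\vecpsi$ and argue elementwise. On a shape-regular tetrahedralization the P1 nodal interpolation operator satisfies the standard Bramble--Hilbert estimate
\[
\norm{\vecf-I_{\mV_h}\vecf}{\mH^1(K)}^2\le c\,h_K^2\,|\vecf|_{H^2(K)}^2,
\qquad \vecf\in H^2(K),
\]
on each element $K$ of diameter $h_K\le h$. Because $\vecw$ is continuous and agrees with $I_{\mV_h}\vecw$ at every vertex, the difference $\vecw-I_{\mV_h}\vecw$ lies in $\mH^1(D)$ and its squared norm is the sum of the local contributions. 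The decisive point is that, at each fixed $t$, the restriction $\vecm_{h,k}^-(t,\cdot)|_K$ is affine, so its second derivatives vanish in the interior of $K$. Applying the product rule to $\vecw=\vecm_{h,k}^-\times\vecpsi$ therefore leaves only
\[
\partial_i\partial_j\vecw
=
\partial_i\vecm_{h,k}^-\times\partial_j\vecpsi
+
\partial_j\vecm_{h,k}^-\times\partial_i\vecpsi
+
\vecm_{h,k}^-\times\partial_i\partial_j\vecpsi,
\]
whence $|D^2\vecw|\le c\,(|\nabla\vecm_{h,k}^-|+|\vecm_{h,k}^-|)\,\norm{\vecpsi}{\mW^{2,\infty}(D_T)}$ pointwise on $K$, and after integration $|\vecw|_{H^2(K)}^2\le c\,\norm{\vecm_{h,k}^-}{\mH^1(K)}^2\,\norm{\vecpsi}{\mW^{2,\infty}(D_T)}^2$.

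Combining the two displays, summing over all elements $K$ (using $h_K\le h$), and finally integrating the resulting spatial bound over $t\in[0,T]$ would produce
\[
\norm{\vecw-I_{\mV_h}\vecw}{\mL([0,T],\mH^1(D))}^2
\le
c\,h^2\,\norm{\vecm_{h,k}^-}{\mL([0,T],\mH^1(D))}^2\,\norm{\vecpsi}{\mW^{2,\infty}(D_T)}^2,
\]
which is the claimed estimate. The step requiring the most care is precisely the vanishing of the second derivatives of $\vecm_{h,k}^-$ on each element: this is what makes the local $H^2$-seminorm of the product $\vecw$ finite and controllable by $\norm{\vecm_{h,k}^-}{\mH^1(K)}$, even though $\vecm_{h,k}^-$ is only globally $\mH^1$ and has no second derivatives across inter-element faces. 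Keeping every estimate element-by-element and assembling the global $\mH^1$ norm only at the end avoids any spurious demand for global $H^2$ regularity of $\vecw$.
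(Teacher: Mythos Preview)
The paper does not actually prove this lemma: it appears in the Appendix with the remark that the result is ``proved in~\cite{BNT2016}'', so there is no in-paper argument to compare against. Your proof is correct and is exactly the standard one: the nonnegative partition-of-unity property of the P1 hat functions for the $\mL^\infty$ bound, and the elementwise Bramble--Hilbert estimate for the $\mH^1$ interpolation error, with the key observation that $\vecm_{h,k}^-(t,\cdot)\in\mV_h$ is affine on each tetrahedron so that the local $H^2$ seminorm of the product $\vecm_{h,k}^-\times\vecpsi$ involves only $\nabla\vecm_{h,k}^-$ and derivatives of $\vecpsi$ up to second order.
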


The next lemma defines a discrete $\mL^p$-norm in
$\mV_h$, equivalent to the usual $\mL^p$-norm.
\begin{lemma}\label{lem:nor equ}
There exist $h$-independent positive constants $C_1$ and
$C_2$ such that for all $p\in[1,\infty]$ and
$\vecu\in\mV_h$,
\begin{equation*}
C_1\|\vecu\|^p_{\mL^p(\Omega)}
\leq
h^d
\sum_{n=1}^N |\vecu(\vecx_n)|^p
\leq
C_2\|\vecu\|^p_{\mL^p(\Omega)},
\end{equation*}
where $\Omega\subset\R^d$, d=1,2,3.
\end{lemma}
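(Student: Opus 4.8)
The plan is to establish the equivalence by a scaling (reference-element) argument: reduce the global inequality to a single estimate on a fixed reference simplex, and then reassemble using the shape regularity and quasi-uniformity of $\mT_h$. Recall that every $\vecu\in\mV_h$ is affine on each tetrahedron $K\in\mT_h$ and is completely determined by its nodal values $\vecu(\vecx_n)$. Throughout, the constants below may depend on $p$, the dimension $d$ and the shape parameters of $\mT_h$, but the point is that they are independent of $h$.

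First I would localise both sides. Since the tetrahedra of $\mT_h$ have disjoint interiors, $\norm{\vecu}{\mL^p(\Omega)}^p=\sum_{K\in\mT_h}\norm{\vecu}{\mL^p(K)}^p$. For the nodal sum, each vertex $\vecx_n$ is shared by at most $\kappa$ tetrahedra, where $\kappa$ depends only on the shape regularity of $\mT_h$, so that
\[
\sum_{n=1}^N|\vecu(\vecx_n)|^p
\le
\sum_{K\in\mT_h}\ \sum_{a\ \mathrm{vertex\ of}\ K}|\vecu(a)|^p
\le
\kappa\sum_{n=1}^N|\vecu(\vecx_n)|^p .
\]
Hence it suffices to prove, for each $K$ with constants independent of $K$ and $h$, the local equivalence
\[
\tilde C_1\,\norm{\vecu}{\mL^p(K)}^p
\le
|K|\sum_{a\ \mathrm{vertex\ of}\ K}|\vecu(a)|^p
\le
\tilde C_2\,\norm{\vecu}{\mL^p(K)}^p ,
\]
after which the quasi-uniformity bound $c\,h^d\le|K|\le C\,h^d$ replaces $|K|$ by $h^d$ up to $h$-independent factors.

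Next I would pass to the reference simplex $\hat K$ through the affine map $F_K(\hat x)=B_K\hat x+b_K$, setting $\hat u=\vecu\circ F_K$, an $\R^3$-valued affine function on $\hat K$. Then $\norm{\vecu}{\mL^p(K)}^p=|\det B_K|\,\norm{\hat u}{\mL^p(\hat K)}^p$, the vertex values are preserved, and $|\det B_K|=|K|/|\hat K|$, so the local claim reduces to the purely reference-element estimate $\tilde c_1\,\norm{\hat u}{\mL^p(\hat K)}^p\le\sum_{\hat a}|\hat u(\hat a)|^p\le\tilde c_2\,\norm{\hat u}{\mL^p(\hat K)}^p$. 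One direction is immediate and even uniform in $p$: writing $\hat u=\sum_{\hat a}\hat u(\hat a)\,\lambda_{\hat a}$ in barycentric coordinates, the triangle inequality together with the convexity of $t\mapsto t^p$ and $\sum_{\hat a}\lambda_{\hat a}=1$, $\lambda_{\hat a}\ge0$ gives $|\hat u|^p\le\sum_{\hat a}|\hat u(\hat a)|^p\lambda_{\hat a}$; integrating and using $\int_{\hat K}\lambda_{\hat a}=|\hat K|/(d+1)$ yields $\norm{\hat u}{\mL^p(\hat K)}^p\le\frac{|\hat K|}{d+1}\sum_{\hat a}|\hat u(\hat a)|^p$. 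For the reverse inequality I would invoke that both $\hat u\mapsto\norm{\hat u}{\mL^p(\hat K)}$ and $\hat u\mapsto\big(\sum_{\hat a}|\hat u(\hat a)|^p\big)^{1/p}$ are norms on the finite-dimensional space of $\R^3$-valued affine functions on $\hat K$ (the latter because an affine function vanishing at all $d+1$ vertices of a nondegenerate simplex is identically zero), hence equivalent. The endpoint $p=\infty$ is exact, since $|\hat u|$ is convex and therefore attains its maximum over $\hat K$ at a vertex, giving $\norm{\hat u}{\mL^\infty(\hat K)}=\max_{\hat a}|\hat u(\hat a)|$.

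The main obstacle, and really the only place needing care, is securing genuine $h$-independence of $C_1,C_2$. This is precisely what shape regularity and quasi-uniformity provide: they bound the node multiplicity $\kappa$ in the assembly step and guarantee $|\det B_K|\asymp|K|\asymp h^d$ with uniform constants, so that the reference constants $\tilde c_1,\tilde c_2$ (which depend only on $\hat K$, $d$ and $p$) are transported to every element without any additional $h$-dependence. Combining the three displayed estimates then produces the claimed inequalities with $h$-independent $C_1$ and $C_2$. I would also note explicitly that the reference equivalence in the direction $\sum_{\hat a}|\hat u(\hat a)|^p\le\tilde c_2\norm{\hat u}{\mL^p(\hat K)}^p$ is the genuinely $p$-sensitive one, so the statement is to be read as: for each $p\in[1,\infty]$ the equivalence holds with constants independent of $h$.
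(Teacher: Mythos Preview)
The paper does not actually prove this lemma: it is merely recalled in the Appendix as a known fact, with the remark that it is ``proved in~\cite{BNT2016}''. So there is no proof in the paper to compare your argument against.

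Your argument is the standard reference-element/scaling proof of this norm equivalence and is correct. The localisation to elements, the change of variables to the reference simplex, and the appeal to equivalence of norms on the finite-dimensional space of affine functions are exactly what one does here; the role of shape regularity and quasi-uniformity in controlling the node multiplicity $\kappa$ and in ensuring $|K|\asymp h^d$ is identified correctly. Your closing observation is also well taken: the statement, read literally, asserts constants $C_1,C_2$ independent of both $h$ and $p$, but the reverse reference inequality $\sum_{\hat a}|\hat u(\hat a)|^p\le \tilde c_2\|\hat u\|_{\mL^p(\hat K)}^p$ genuinely depends on $p$ (already in one dimension, $u(x)=x$ on $[0,1]$ gives $|u(1)|^p=(p+1)\|u\|_{L^p}^p$). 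So the honest reading is that for each fixed $p\in[1,\infty]$ the equivalence holds with $h$-independent constants, which is all that is ever used in the paper.
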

\section*{Acknowledgements}
The authors acknowledge financial support through the ARC Discovery
projects DP140101193 and DP120101886. They are grateful to Vivien Challis for a number of helpful conversations.

\bibliographystyle{myabbrv}
\bibliography{mybib}

\end{document}